\newtheorem{theorem}{Theorem}[section]
\newtheorem{lemma}[theorem]{Lemma}
\newtheorem{proposition}[theorem]{Proposition}
\newtheorem{corollary}[theorem]{Corollary}
\theoremstyle{definition}
\newtheorem{definition}[theorem]{Definition}
\newtheorem{remark}{Remark}
\numberwithin{equation}{section}
\newcommand{\N}{{\mathbb N}}
\newcommand{\Z}{{\mathbb Z}}
\newcommand{\R}{{\mathbb R}}
\renewcommand{\C}{{\mathbb C}}
\newcommand{\A}{{\mathbb A}}
\newcommand{\E}{{\mathsf E}}
\DeclareMathOperator{\supp}{supp}
\DeclareMathOperator{\Mat}{Mat}
\newcommand{\Res}{\operatorname{Res}}
\newcommand{\GMP}{\text{\rm GMP}}
\newcommand{\tr}{\operatorname{tr}}
\newcommand{\id}{\mathrm{id}}
\newcommand{\diag}{\operatorname{diag}}
\newcommand{\CS}{\operatorname{CS}}
\newcommand{\cA}{{\mathcal{A}}}
\newcommand{\cB}{{\mathcal{B}}}
\newcommand{\cF}{{\mathcal{F}}}
\newcommand{\cG}{{\mathcal{G}}}
\newcommand{\cJ}{{\mathcal{J}}}
\newcommand{\cI}{{\mathcal{I}}}
\newcommand{\cL}{{\mathcal{L}}}
\newcommand{\cO}{{\mathcal{O}}}
\newcommand{\cP}{{\mathcal{P}}}
\newcommand{\cS}{{\mathcal{S}}}
\newcommand{\cT}{{\mathcal{T}}}
\newcommand{\fA}{{\mathfrak{A}}}
\newcommand{\fa}{{\mathfrak{a}}}
\newcommand{\fj}{{\mathfrak{j}}}
\newcommand{\fv}{{\mathfrak{v}}}
\newcommand{\fw}{{\mathfrak{w}}}
\newcommand{\bb}{\mathbf{b}}
\newcommand{\ba}{\mathbf{a}}
\newcommand{\bc}{\mathbf{c}}
\newcommand{\bo}{\mathbf{o}}
\newcommand{\bC}{\mathbf{C}}
\newcommand{\bp}{{\mathbf{p}}}
\newcommand{\bJ}{\mathbf{J}}
\newcommand{\bF}{\mathbf{F}}
\newcommand{\vare}{{\varepsilon}}
\newcommand{\e}{{\epsilon}}
\newcommand{\g}{\gamma}
\renewcommand{\l}{\lambda}
\renewcommand{\a}{\alpha}
\renewcommand{\b}{\beta}
\renewcommand{\d}{\delta}
\renewcommand{\L}{{\Lambda}}
\newcommand{\dd}{\mathrm{d}}
\renewcommand{\Re}{\operatorname{Re}}
\renewcommand{\Im}{\operatorname{Im}}
\renewcommand{\Cap}{\operatorname{Cap}}
\newcommand{\dist}{\operatorname{dist}}
\newcommand{\esssupp}{\operatorname{ess\,supp}}
\newcommand{\PSL}{\mathrm{PSL}}
\newcommand{\wlim}{\operatorname*{w-lim}}
\newcommand{\vp}{{\vec{p}}}
\newcommand{\vq}{{\vec{q}}}
\newcommand{\vbp}{{\vec{\mathbf{p}}}}
\newcommand{\ess}{\text{\rm{ess}}}
\newcommand{\bbD}{\mathbb{D}}
\newcommand{\bbN}{\mathbb{N}}
\newcommand{\bbZ}{\mathbb{Z}}
\newcommand{\bbR}{\mathbb{R}}
\newcommand{\bbC}{\mathbb{C}}
\renewcommand{\GMP}{\mathrm{GMP}}
\newcommand{\ac}{\mathrm{ac}}
\newcommand{\spann}{\mathrm{span}}
\title[Orthogonal rational functions with real poles, root asymptotics, GMP matrices]{Orthogonal rational functions with real poles, root asymptotics, and GMP matrices}
\thanks{B.E.\ was supported by Austrian Science Fund FWF, project no: J 4138-N32.}
\thanks{M.L.\ was supported in part by NSF grant DMS--1700179.}
\thanks{G.Y.\ was supported in part by NSF grant DMS--1745670.}
\author{Benjamin Eichinger, Milivoje Luki\'c, Giorgio Young}
\address{Institute of Analysis, Johannes Kepler University of Linz, 4040 Linz, Austria.}
\email{benjamin.eichinger@jku.at}
\address{Department of Mathematics, Rice University MS-136, Box 1892,
	Houston, TX 77251-1892, USA.}
\email{milivoje.lukic@rice.edu}
\address{Department of Mathematics, Rice University MS-136, Box 1892,
	Houston, TX 77251-1892, USA.}
\email{gfy1@rice.edu}
\begin{document}
\begin{abstract}
There is a vast theory of the asymptotic behavior of orthogonal polynomials with respect to a measure on $\mathbb{R}$ and its applications to Jacobi matrices. That theory has an obvious affine invariance and a very special role for $\infty$. We extend aspects of this theory in the setting of rational functions with poles on $\overline{\mathbb{R}} = \mathbb{R} \cup \{\infty\}$, obtaining a formulation which allows multiple poles and proving an invariance with respect to $\overline{\mathbb{R}}$-preserving M\"obius transformations. We obtain a characterization of Stahl--Totik regularity of a GMP matrix in terms of its matrix elements; as an application, we give a proof of a conjecture of Simon -- a Ces\`aro--Nevai property of regular Jacobi matrices on finite gap sets.
\end{abstract}

\maketitle

\section{Introduction}

There is a vast theory of orthogonal polynomials with respect to measures on $\bbC$ and their root asymptotics, exemplified by the Ullman--Stahl--Totik theory of regularity. Let $\mu$ be a compactly supported probability measure and $\{p_n\}_{n=0}^\infty$ the corresponding orthonormal polynomials, obtained by the Gram--Schmidt process from $\{z^n\}_{n=0}^\infty$ in $L^2(d\mu)$. Then
\begin{equation}\label{6aug1}
\liminf_{n\to\infty}  \lvert p_n(z) \rvert^{1/n} \ge e^{G_\E(z,\infty)}
\end{equation}
for $z$ outside the convex hull of $\supp \mu$, where $\E$ is the essential support of $\mu$ and $G_\E$ denotes the potential theoretic Green function for the domain $\overline{\bbC} \setminus \E$; if that domain is not Greenian, one takes $G_\E = +\infty$ instead. For measures compactly supported in $\bbR$, this theory can be interpreted in terms of self-adjoint operators. In particular, for any bounded  half-line Jacobi matrix
\[
J = \begin{pmatrix}
b_1 & a_1 &  &  \\
a_1 & b_2 & a_2 & \\
 & a_2 & \ddots & \ddots \\
 &  & \ddots &
 \end{pmatrix}
\]
with $a_\ell > 0$, $b_\ell \in \bbR$,
\begin{equation}\label{6aug2}
\limsup_{n\to\infty} \left( \prod_{\ell=1}^n a_\ell \right)^{1/n} \le \Cap \sigma_\ess(J),
\end{equation}
where $\Cap$ denotes logarithmic capacity. For both of these universal inequalities, the case of equality (and existence of limit) is called Stahl--Totik regularity \cite{StahlTotik92}; the theory originated with the case $\E = [-2,2]$, first studied by Ullman~\cite{Ullman}.

We extend aspects of this theory to the setting of rational functions with poles in $\overline{\bbR} = \bbR \cup \{\infty\}$. One motivation for this  is the search for a more conformally invariant theory. Statements such as \eqref{6aug1}, \eqref{6aug2} rescale in obvious ways with respect to affine transformations (automorphisms of $\bbC$) which preserve $\bbR$, so it is obvious that an affine pushforward of a Stahl--Totik regular measure is Stahl--Totik regular. However, the point $\infty$ has a very special role throughout the theory: for a M\"obius transformation $f$ which does not preserve $\infty$, $p_n \circ f$ are rational functions with a pole at $f^{-1}(\infty)$, and $f(J)$ as defined by the functional calculus is not a finite band matrix. Thus, it is a nontrivial question whether a M\"obius pushforward of a Stahl--Totik regular measure is Stahl--Totik regular.

The set of M\"obius transformations which preserve  $\overline{\bbR}$ is the semidirect group product $\PSL(2,\bbR) \rtimes \{ \id, z\mapsto -z \}$,  whose normal subgroup $\PSL(2,\bbR)$ corresponds to the orientation preserving case. Denote by $f_* \mu$ the pushforward of $\mu$, defined by $(f_* \mu)(A) = \mu(f^{-1}(A))$ for Borel sets $A$. As an example of our techniques, we obtain the following:

\begin{theorem} \label{thmST2}
Let $f \in  \PSL(2,\bbR) \rtimes \{ \id, z\mapsto -z \}$. If $\mu$ is a Stahl--Totik regular measure on $\bbR$ and $\infty \notin \supp  (f_* \mu)$, then the pushforward measure $f_*\mu$ is also Stahl--Totik regular.
\end{theorem}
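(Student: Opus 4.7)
The plan is to transport Stahl--Totik regularity of $\mu$ through the change of variables $y=f(x)$, obtaining a ``rational'' regularity of $\nu:=f_*\mu$ with respect to a pole at $\zeta:=f(\infty)$, and then to invoke the paper's matrix-element characterization of Stahl--Totik regularity for GMP matrices in order to move the pole back to $\infty$. The affine case $f(\infty)=\infty$ (which includes the reflection $z\mapsto-z$) is the standard affine invariance of Stahl--Totik regularity, so I assume $f\in\PSL(2,\bbR)$ with $\zeta\in\bbR$; by hypothesis $\zeta\notin\supp\nu$. If $\{p_n\}$ are the orthonormal polynomials for $\mu$, then $Ug:=g\circ f$ defines a unitary $L^2(d\nu)\to L^2(d\mu)$, so $\tilde p_n:=p_n\circ f^{-1}=U^{-1}p_n$ is an orthonormal basis of $L^2(d\nu)$. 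Each $\tilde p_n$ is a rational function with a single pole of order $n$ at $\zeta$, so (up to signs) $\{\tilde p_n\}$ is the orthonormal rational basis for $\nu$ associated to the constant pole sequence $(\zeta,\zeta,\ldots)$, and the corresponding GMP matrix $\cA$ is unitarily equivalent via $U$ to the operator of multiplication by $f$ expressed in the $\{p_n\}$-basis.

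The root asymptotics now transfer through $U$ automatically. Writing $\E$ for the essential support of $\mu$, $f$ restricts to a conformal automorphism $\overline{\bbC}\setminus\E\to\overline{\bbC}\setminus f(\E)$ sending $\infty$ to $\zeta$, so the Green function transforms as $G_{f(\E)}(w,\zeta)=G_\E(f^{-1}(w),\infty)$. Stahl--Totik regularity of $\mu$ gives $\lim_n|p_n(z)|^{1/n}=e^{G_\E(z,\infty)}$ off $\E$; composition with $f^{-1}$ yields
\[
\lim_{n\to\infty}|\tilde p_n(w)|^{1/n} = e^{G_{f(\E)}(w,\zeta)},
\]
i.e.\ Stahl--Totik regularity of the GMP matrix $\cA$ in the sense of the paper (regularity of $\nu$ with respect to orthogonal rational functions having all poles at $\zeta$).

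The proof is completed by the paper's characterization of Stahl--Totik regularity of a GMP matrix in terms of its matrix elements. Since this characterization is intrinsic to the measure $\nu$ and not tied to any particular pole sequence, regularity with pole sequence $(\zeta,\zeta,\ldots)$ forces regularity with pole sequence $(\infty,\infty,\ldots)$, the latter being precisely Stahl--Totik regularity of the Jacobi matrix of $\nu$, i.e.\ of $\nu$ itself. The main obstacle is exactly this last step: the change of variables and conformal invariance of the Green function in the earlier steps are routine, but switching the pole back to $\infty$ depends on having a genuinely pole-independent matrix-element criterion for regularity, which is what the paper's GMP framework is designed to supply.
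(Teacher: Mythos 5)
Your first half is correct and is essentially the paper's Lemma~\ref{lemmaMobius}: the unitary $Ug=g\circ f$ identifies $p_n\circ f^{-1}$ (up to sign) with the orthonormal rational functions of $\nu=f_*\mu$ for the pole sequence $(\zeta)$, $\zeta=f(\infty)$, and conformal invariance of the Green function converts Stahl--Totik regularity of $\mu$ into $(\zeta)$-regularity of $\nu$ in the sense of Theorem~\ref{thm:CRegMain}. The paper explicitly warns, right after Lemma~\ref{lemmaMobius}, that this is where the easy part ends: what remains is to show that $(\zeta)$-regularity of $\nu$ implies $(\infty)$-regularity of $\nu$, i.e.\ Stahl--Totik regularity, and that is the actual content of the theorem.

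Your justification of that remaining step is where the gap lies. You appeal to the matrix-element characterization of regularity of a GMP matrix (Theorem~\ref{thmRegularityGMP}) and assert that it is ``intrinsic to the measure and not tied to any particular pole sequence.'' This does not work as stated. First, the constant pole sequence $(\zeta,\zeta,\dots)$ with $\zeta\in\bbR$ does not yield a GMP matrix in the paper's sense: the definition requires $\bc_k=\infty$ for some $k$, since otherwise multiplication by $x$ is not finite-band; what your basis $\{\tilde p_n\}$ diagonalizes into banded form is multiplication by $(\zeta-y)^{-1}$, i.e.\ the Jacobi matrix of yet another pushforward of $\nu$, and relating its regularity to that of $\nu$ is again exactly the pole-moving problem. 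Second, and more importantly, Theorem~\ref{thmRegularityGMP} is not pole-independent by fiat: its proof shows that the product condition \eqref{16aug4} is equivalent to $\bC$-regularity, and the identification of $\bC$-regularity with Stahl--Totik regularity is precisely Corollary~\ref{thmST1}, the statement you would need it to deliver. So the argument is circular at the very point you flag as ``the main obstacle.'' The missing ingredient is the Proposition in Section 5 of the paper: for one direction expand $p_n$ in the basis $\{\tau_m\}_{m\le n(g+1)}$ and use $|c_m|\le 1$; for the converse write $\tau_n=P_n/R_{g+1}^{j+1}$ and approximate $1/R_{g+1}$ on $\E$ by a polynomial $Q_\epsilon$ with $1-\epsilon\le Q_\epsilon R_{g+1}\le 1+\epsilon$, so that $P_nQ_\epsilon^{j+1}$ is a nearly normalized polynomial whose q.e.\ subexponential growth on $\E$ transfers back to $\tau_n$. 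Without this approximation argument (or an equivalent substitute) your proof is incomplete.
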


However, we will mostly work in the more general setting when multiple poles on $\overline{\bbR}$ are allowed, which arises naturally in the spectral theory of self-adjoint operators. Denote by $T_{f,d\mu}$ the multiplication operator by $f$ in $L^2(d\mu)$. The matrix representation for $T_{x,d\mu(x)}$ in the basis of orthogonal polynomials is a Jacobi matrix, and through this classical connection, the theory of orthogonal polynomials is inextricably linked to the spectral theory of Jacobi matrices. In this matrix representation, resolvents $T_{(\bc-x)^{-1},d\mu(x)}$ are not finite-diagonal matrices. However, in a basis of orthogonal rational functions with poles at $\bc_1, \dots, \bc_g, \infty$, the multiplication operators $T_{(\bc_1-x)^{-1},d\mu(x)}$, \dots, $T_{(\bc_g-x)^{-1},d\mu(x)}$, $T_{x,d\mu(x)}$ all have precisely $2g+1$ nontrivial diagonals. The corresponding matrix representations are called GMP matrices; they were introduced by Yuditskii \cite{YuditskiiAdv}.

Self-adjoint operators and their matrix representations are an important part of this work, so we choose to present the theory in a more self-contained way, using self-adjoint operators from the ground up; this has similarities with \cite{Simon07}. Some proofs could be shortened by using orthogonal polynomials with respect to varying weights \cite[Chapter 3]{StahlTotik92}, but some facts rely on the precise structure obtained by the periodically repeating sequence of poles.

We should also compare this to the case of CMV matrices: for a measure supported on the unit circle, Stahl--Totik regularity is still defined in terms of orthogonal polynomials, but the CMV basis \cite{CMV03,Simon07} is given in terms of positive and negative powers of $z$, i.e., orthonormal rational functions with poles at $\infty$ and $0$. The symmetries in that setting lead to explicit formulas for the CMV basis in terms of the orthogonal polynomials; it is then a matter of calculation to relate the exponential growth rate of the CMV basis to that of the orthogonal polynomials, and to interpret regularity in terms of the CMV basis.  In our setting, there is no such symmetry and no formula for orthonormal rational functions in terms of orthonormal polynomials.

In order to state our results in a conformally invariant way, we will use the following notations and conventions throughout the paper. The measure $\mu$ will be a probability measure on $\overline{\bbR}$. We denote by $\supp \mu$ its support in $\overline{\bbR}$, and we consider its essential support (the support with isolated points removed), denoted
\[
\E = \esssupp \mu.
\]
We will always assume that $\mu$ is nontrivial; equivalently, $\E \neq \emptyset$.

Fix a finite sequence with no repetitions, $\bC = ( \bc_1,\dots, \bc_{g+1})$ with $\bc_k \in \overline{\bbR} \setminus \supp \mu$ for all $k$. Consider the sequence
$\{ r_n \}_{n=0}^\infty$ where $r_0 = 1$ and for $n = j (g+1) + k$, $1\le k \le g+1$,
\begin{equation}\label{rndefn}
r_n(z) = \begin{cases} \frac 1{( \bc_k - z)^{j+1}} & \bc_k \in \bbR \\
z^{j+1} & \bc_k = \infty
\end{cases}
\end{equation}
Applying the Gram--Schmidt process to this sequence in $L^2(d\mu)$ gives the sequence of orthonormal rational functions $\{ \tau_n \}_{n=0}^\infty$ whose behavior we will study. We note that the special case $\supp \mu \subset \bbR$, $g=0$, $\bC = (\infty)$ corresponds to the standard construction of orthonormal polynomials associated to the measure $\mu$ (note that, since we denote by $\supp \mu$ the support in $\overline{\bbR}$, the statement $\supp \mu \subset \bbR$ implies that $\mu$ is compactly supported in $\bbR$), and our first results are an extension of the same techniques.

The first result is a universal lower bound on the growth of $\{ \tau_n \}_{n=0}^\infty$ in terms of a potential theoretic quantity. If $\E$ is not a polar set, we use the (potential theoretic) Green function for the domain $\overline{\bbC} \setminus \E$, denoted $G_\E$, and we define
\begin{equation}\label{Glincomb}
\cG_\E(z,\bC) = \begin{cases}
\frac{1}{g+1} \sum_{k=1}^{g+1} G_\E(z,\bc_k) & \E\text{ is not polar} \\
+\infty & \E\text{ is polar}
\end{cases}
\end{equation}

\begin{theorem} \label{thm11}
For all $z \in \overline{\bbC} \setminus \overline{\bbR}$,
\[
\liminf_{n\to\infty}  \lvert \tau_{n}(z) \rvert^{1/n} \ge e^{\cG_\E(z,\bC)}.
\]
\end{theorem}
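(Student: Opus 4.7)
The plan is to extend the classical potential-theoretic argument for orthogonal polynomials to the rational-function setting, via reality of zeros of $\tau_n$, a subsequential weak-$*$ limit of the normalized zero-counting measure, and a generalized Chebyshev / capacity lower bound on the leading coefficient.

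\emph{Reality of zeros.} Write $\tau_n = P_n/Q_n$ with $Q_n(z) = \prod_{\bc_\ell \ne \infty}(\bc_\ell - z)^{m_{n,\ell}}$, so $P_n$ is a real polynomial. Suppose $P_n(z_0) = 0$ for some $z_0 \in \bbC\setminus\bbR$; by reality also $P_n(\bar z_0) = 0$. Let $k$ be the index for which $\spann\{r_0,\dots,r_n\}$ differs from $\spann\{r_0,\dots,r_{n-1}\}$ by an extra pole at $\bc_k$, and set
\[
g(z) = \tau_n(z) \cdot \frac{(\bc_k - z)^2}{(z - z_0)(z - \bar z_0)}
\]
when $\bc_k \in \bbR$, or $g(z) = \tau_n(z)/[(z-z_0)(z-\bar z_0)]$ when $\bc_k = \infty$. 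A pole-order count (at each $\bc_\ell$ and at $\infty$) shows $g \in \spann\{r_0,\dots,r_{n-1}\}$. On $\bbR$, $\tau_n$ is real, $(\bc_k - z)^2 > 0$ away from $\bc_k \notin \supp\mu$, and $\lvert z - z_0\rvert^2 > 0$, so $\tau_n \bar g = \tau_n^2 (\bc_k - z)^2/\lvert z - z_0\rvert^2 \ge 0$ (omit the extra factor if $\bc_k = \infty$) and is not identically zero on $\supp\mu$. Hence $\int \tau_n \bar g\, d\mu > 0$, contradicting $\tau_n \perp \spann\{r_0,\dots,r_{n-1}\}$. We may thus factor
\[
\tau_n(z) = A_n \prod_j (z - z_{n,j}) \bigg/ \prod_{\bc_\ell \ne \infty}(\bc_\ell - z)^{m_{n,\ell}}, \qquad z_{n,j} \in \bbR.
\]

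\emph{Subsequential limits.} For $z_0 \in \overline{\bbC}\setminus\overline{\bbR}$,
\[
\frac{1}{n}\log\lvert\tau_n(z_0)\rvert = \frac{1}{n}\log\lvert A_n\rvert + \int \log\lvert z_0 - w\rvert\, d\nu_n(w) - \sum_{\bc_\ell \ne \infty}\frac{m_{n,\ell}}{n}\log\lvert\bc_\ell - z_0\rvert,
\]
where $\nu_n = \frac{1}{n}\sum_j \delta_{z_{n,j}}$ is supported in a fixed compact subset of $\bbR$. Pass to a subsequence along which $\frac{1}{n}\log\lvert\tau_n(z_0)\rvert$ tends to its $\liminf$ and $\nu_n$ converges weak-$*$ to some probability measure $\nu$ on $\bbR$. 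Since $z_0 \notin \bbR$, $w\mapsto\log\lvert z_0 - w\rvert$ is continuous and bounded on $\supp\nu$, giving convergence of the zero integral. Because $m_{n,\ell}/n\to 1/(g+1)$, the pole sum tends to $\frac{1}{g+1}\sum_{\bc_\ell\ne\infty}\log\lvert\bc_\ell - z_0\rvert$.

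\emph{Main obstacle.} The remaining step is the asymptotic lower bound
\[
\liminf_n \frac{1}{n}\log\lvert A_n\rvert + \int \log\lvert z_0 - w\rvert\,d\nu(w) - \frac{1}{g+1}\sum_{\bc_\ell\ne\infty}\log\lvert\bc_\ell - z_0\rvert \ge \cG_\E(z_0,\bC).
\]
The key input is a generalized Chebyshev--capacity lower bound on $\lvert A_n\rvert^{1/n}$: combining $\int\lvert\tau_n\rvert^2 d\mu = 1$ with the trivial comparison $\int\lvert\tau_n\rvert^2 d\mu \le \lVert\tau_n\rVert_{L^\infty(\supp\mu)}^2$ and contrasting with the $L^\infty(\E)$-extremal rational function in $\spann\{r_0,\dots,r_n\}$ produces an asymptotic lower bound expressed via the capacity associated with the pole distribution $\rho_\bC = \frac{1}{g+1}\sum_\ell \delta_{\bc_\ell}$. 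This capacity and its equilibrium measure $\omega_\bC$ on $\E$ are characterized by a Frostman-type identity linking them to the generalized Green function $\cG_\E(\cdot,\bC)$, and a principle-of-domination argument comparing $\nu$ to $\omega_\bC$ combines with the Chebyshev bound to yield the stated inequality. Isolated atoms of $\supp\mu$ outside $\E$ are handled by polarity of the countable atom set together with Bernstein--Walsh bounds at those atoms, so that the Chebyshev asymptotics on $\supp\mu$ and on $\E$ coincide.
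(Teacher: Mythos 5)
There is a genuine gap at the step you yourself flag as the ``main obstacle,'' and it is not repairable with the tools you name. First, the Chebyshev comparison is set up in the wrong direction: $1=\int|\tau_n|^2\,d\mu\le\|\tau_n\|^2_{L^\infty(\supp\mu)}$ only says that the sup norm of $\tau_n$ is at least $1$, while comparing $\kappa_n^{-1}\tau_n$ with the $L^\infty(\E)$-extremal function yields $\|\tau_n\|_{L^\infty(\E)}\ge\kappa_n t_n$, where $t_n$ is the extremal value; together these give no lower bound on $\kappa_n$. (The correct mechanism for a bound like Theorem~\ref{thm12} is the $L^2(d\mu)$-minimality of the ``monic'' function $\kappa_n^{-1}\tau_n$, tested against a competitor whose sup norm on $\supp\mu$ is bounded \emph{above}.) Second, and more seriously: even granting $\liminf_j\kappa_{n(j)}^{1/n(j)}\ge\lambda_k^{1/(g+1)}$, your domination argument cannot deliver the pointwise bound at \emph{all} $z\in\overline{\C}\setminus\overline{\R}$. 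After passing to the limit you must show $\Phi_\nu(z)\ge\Phi_{\rho_{\E,\bC}}(z)-c$ at the given $z$, and the principle of domination requires this inequality to be known first q.e.\ on $\supp\nu$; nothing in your argument supplies that. For a non-regular $\mu$ the limiting zero distribution $\nu$ need not resemble $\rho_{\E,\bC}$, and a priori $\Phi_\nu$ could dip far below $\Phi_{\rho_{\E,\bC}}$ at points of $\C_+$ close to $\E$. This is exactly why the classical inequality \eqref{6aug1} is stated only outside the convex hull of $\supp\mu$, where convexity controls $\Phi_{\nu}$; Theorem~\ref{thm11} is claimed on all of $\overline{\C}\setminus\overline{\R}$.

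The missing ingredient is the paper's Lemma~\ref{lem:positivity}: $\liminf_n\frac1n\log|\tau_n(z)|>0$ for every $z\in\C\setminus\R$, proved not by potential theory but by operator theory --- the sequence $\{\tau_\ell(z)\}_{\ell}$ is a formal eigensolution of the self-adjoint GMP matrix at the non-real energy $z$, and the bound $|\Im z|\,\|\vec\varphi_j\|^2\le|\langle\vec\varphi_j,(A-z)\vec\varphi_j\rangle|$ forces exponential growth. This positivity shows that every subsequential limit $h=\alpha+\Phi_\nu-\cdots$ is a \emph{positive} harmonic function on $\overline{\C}\setminus(\E\cup\{\bc_1,\dots,\bc_{g+1}\})$ with logarithmic poles of weight $\frac1{g+1}$ at the $\bc_m$; the minimality of the Green function among such functions (Lemma~\ref{lem:Greenminimality}, a q.e.\ maximum-principle argument) then gives $h\ge\cG_\E(\cdot,\bC)$ everywhere, with no need to compare $\nu$ to an equilibrium measure, and it also disposes of the polar case via Myrberg's theorem. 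Your reality-of-zeros argument and the passage to subsequential limits of $\nu_n$ do match the paper (Lemma~\ref{lem:zeroesofnumeratornew} and Theorem~\ref{thm:June9}), but without a substitute for Lemma~\ref{lem:positivity} the proof does not close.
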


This is a good place to point out that our current setup is not related to the recent paper \cite{EichLuk}, in which the behavior was compared to a Martin function at a boundary point of the domain. Here, the behavior is compared to a combination of Green functions \eqref{Glincomb}, all the poles are in the interior of the domain $\overline{\bbC} \setminus \E$, and the difficulty comes instead from the multiple poles.

Another universal inequality for orthonormal polynomials comes from comparing their leading coefficients to the capacity of $\E$. In our setting, the analog of the leading coefficient must be considered in a pole-dependent way. Denote
\[
\cL_n = \spann \{ r_\ell \mid 0 \le \ell \le n\}.
\]
By the nature of the Gram--Schmidt process, there is a $\kappa_n > 0$ such that
\[
\tau_n - \kappa_n r_n \in \cL_{n-1}.
\]
The Gram--Schmidt process can be reformulated as the $L^2(d\mu)$-extremal problem
\begin{equation} \label{eq:L2extremal}
\kappa_n = \max\left\{\Re\kappa: f=\kappa r_n +h, h\in\cL_{n-1}, \|f\|_{L^2(d\mu)}\leq 1\right\}.
\end{equation}
By strict convexity of the $L^2$-norm, these $L^2$-extremal problems have unique extremizers given by $f = \tau_n$, and $\kappa_n$ is explicitly characterized as a kind of leading coefficient for $\tau_n$ with respect to the pole at $\bc_k$ where $n = j (g+1) + k$, $1 \le k \le g+1$. Below, we will also relate the constants $\kappa_n$ to off-diagonal coefficients of certain matrix representations.

The growth of the leading coefficients $\kappa_n$ will be studied along sequences $n=j(g+1)+k$ for a fixed $k$, and bounded by quantities related to the pole $\bc_k$. If $\E$ is not a polar set, it is a basic property of the Green function that the limits
\[
\gamma_\E^k=\begin{cases}\lim_{z\to\bc_k}(G_\E(z,\bc_k)+\log|z-\bc_k|),&\bc_k\ne \infty\\ \lim_{z\to\bc_k}(G_\E(z,\bc_k)-\log|z|),& \bc_k=\infty\end{cases}
\]
exist. Note that, if $\bc_k = \infty$, $\gamma_\E^k$ is precisely the Robin constant for the set $\E$. We further define constants $\lambda_k$ by
\begin{equation}\label{6aug5}
\log \lambda_k = \begin{cases} \g_\E^k  + \sum_{\substack{1\leq \ell \leq g+1\\ \ell \neq k}}G_\E(\bc_k,\bc_\ell) & \E\text{ is not polar} \\
+\infty & \E\text{ is polar}
\end{cases}
\end{equation}

\begin{theorem} \label{thm12}
For all $1 \le k \le g+1$, for the subsequence $n(j)=j(g+1)+k$,
\begin{equation}\label{16aug1}
\liminf_{j \to \infty} \kappa_{n(j)}^{1/n(j)} \geq \lambda_k^{1/ (g+1)}.
\end{equation}
\end{theorem}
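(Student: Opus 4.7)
The plan is to apply Theorem~\ref{thm11} after clearing denominators of $\tau_n$, and then transfer the resulting lower bound from the open domain $\overline{\bbC}\setminus\overline{\bbR}$ to the boundary pole $\bc_k$ via a subsequential weak-limit argument for subharmonic functions. Take $\bc_k\in\bbR$ (the $\bc_k=\infty$ case is analogous). Denote by $m_\ell^{(n)}$ the order of the pole of $\tau_n$ at $\bc_\ell$; along $n=n(j)$ one has $m_k^{(n)}=j+1$ and $m_\ell^{(n)}/n\to 1/(g+1)$ for each $\ell$. The function $P_n(z):=\tau_n(z)\prod_{\bc_\ell\in\bbR}(\bc_\ell-z)^{m_\ell^{(n)}}$ is then a polynomial of degree at most $n$, with
\[
|P_n(\bc_k)|=\kappa_n\prod_{\ell\ne k,\,\bc_\ell\in\bbR}|\bc_\ell-\bc_k|^{m_\ell^{(n)}}.
\]
Set $h(z):=\cG_\E(z,\bC)+\frac{1}{g+1}\sum_{\bc_\ell\in\bbR}\log|\bc_\ell-z|$. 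The logarithmic singularities in the two summands cancel at each finite pole, so $h$ extends harmonically across each $\bc_\ell\in\bbR$, and a direct computation yields $h(\bc_k)=\frac{1}{g+1}\log\lambda_k+\frac{1}{g+1}\sum_{\ell\ne k,\,\bc_\ell\in\bbR}\log|\bc_\ell-\bc_k|$. Consequently the target inequality is equivalent to $\liminf_n\frac{1}{n}\log|P_n(\bc_k)|\ge h(\bc_k)$, and Theorem~\ref{thm11} combined with $m_\ell^{(n)}/n\to 1/(g+1)$ delivers the analogous inequality off the real axis: $\liminf_n\frac{1}{n}\log|P_n(z)|\ge h(z)$ for every $z\notin\overline{\bbR}$.

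To transfer this to $\bc_k$, I argue by contradiction: suppose $\liminf\kappa_{n(j)}^{1/n(j)}=\kappa_*<\lambda_k^{1/(g+1)}$ and extract a subsequence along which $\kappa_n^{1/n}\to\kappa_*$. By Helly selection, refine further so that the normalized zero-counting measures $\nu_n$ of $P_n$ converge weakly to a probability measure $\nu$ and $|c_n|^{1/n}\to C_*\in(0,\infty)$, where $c_n$ is the leading coefficient of $P_n$ (the cases $C_*=0$ or $\infty$ already contradict the pointwise lower bound off $\bbR$). Then $\frac{1}{n}\log|P_n|\to v:=\log C_*-U^\nu$ in $L^1_{\loc}(\bbC)$, with $v$ subharmonic, and the off-axis bound persists to $v\ge h$ on $\bbC\setminus\overline{\bbR}$. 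Since $v-h$ is subharmonic on $\overline{\bbC}\setminus\E$, upper semicontinuity of $v-h$ at $\bc_k$ together with continuity of $h$ there gives
\[
v(\bc_k)-h(\bc_k)\ge\limsup_{z\to\bc_k,\,z\notin\overline{\bbR}}(v(z)-h(z))\ge 0.
\]
On the other hand, the formula for $|P_n(\bc_k)|$ gives $\frac{1}{n}\log|P_n(\bc_k)|\to\log\kappa_*+\frac{1}{g+1}\sum_{\ell\ne k,\,\bc_\ell\in\bbR}\log|\bc_\ell-\bc_k|$ along the subsequence; identifying this limit with $v(\bc_k)$ and subtracting $h(\bc_k)$ yields $\log\kappa_*\ge\frac{1}{g+1}\log\lambda_k$, contradicting $\kappa_*<\lambda_k^{1/(g+1)}$.

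The hardest step is justifying the pointwise identification $v(\bc_k)=\lim\frac{1}{n}\log|P_n(\bc_k)|$, which requires $\bc_k\notin\supp\nu$ so that $U^{\nu_n}(\bc_k)\to U^\nu(\bc_k)$; equivalently, that the zeros of $\tau_n$ (which coincide with those of $P_n$) do not accumulate at $\bc_k\in\bbR\setminus\supp\mu$. I expect this to follow from an extension to the rational setting of the classical sign-change argument for orthogonal polynomials: if $\tau_n$ had fewer than $n$ sign changes on $\supp\mu$, one could construct $q\in\cL_{n-1}$ vanishing at the sign-change points, so that $\tau_n q$ has constant sign on $\supp\mu$ and $\int\tau_n q\,d\mu\ne 0$, contradicting orthogonality; together with the fact that $P_n$ has degree $n$, this forces all zeros of $\tau_n$ into $\supp\mu$, which excludes $\bc_k$. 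The case $\bc_k=\infty$ runs on the same scheme, with $P_n$ a polynomial of degree exactly $n$ and leading coefficient $\pm\kappa_n$; one then applies the subharmonic argument at $z=\infty$ using the growth $h(z)-\log|z|\to\frac{1}{g+1}\log\lambda_\infty$. Finally, the polar case $\lambda_k=\infty$ is trivial because Theorem~\ref{thm11} already forces $|\tau_n(z)|^{1/n}\to\infty$ at any $z\notin\overline{\bbR}$, and then $\kappa_n^{1/n}\to\infty$ follows from the formula for $|P_n(\bc_k)|$.
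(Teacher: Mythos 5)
Your overall strategy is essentially the paper's: extract a subsequence realizing the $\liminf$ along which the zero-counting measures converge weakly, identify the limit of $\frac1n\log|\tau_n|$ as a potential-theoretic function, bound it below by $\cG_\E(\cdot,\bC)$, and read off the constant at the pole $\bc_k$. The paper normalizes $\bc_k=\infty$ via Lemma~\ref{lemmaMobius} and runs Theorem~\ref{thm:June9} together with the maximum-principle Lemma~\ref{lem:Greenminimality}; your variant of importing the off-axis bound from Theorem~\ref{thm11} and then using upper semicontinuity of the subharmonic function $v-h$ at the single point $\bc_k$ is a legitimate, more local alternative, and it is not circular since the paper's proof of Theorem~\ref{thm11} does not use Theorem~\ref{thm12}.

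However, the step you yourself flag as the hardest is broken as written. You claim a sign-change argument forces all zeros of $\tau_n$ into $\supp\mu$. That is false: Lemma~\ref{lem:zeroesofnumeratornew} allows one zero in each connected component of $\overline{\bbR}\setminus\supp\mu$ other than the component $I$ containing $\bc_k$, and such gap zeros do occur (already for orthogonal polynomials on a union of two intervals). A sign-change argument only bounds the number of sign changes of $\tau_n$ on $\supp\mu$ from below, and a sign change between two components of $\supp\mu$ may be produced by a zero lying in the gap between them; so it gives no control on whether zeros enter the gap $I$ and accumulate at $\bc_k$ --- which is exactly what you must exclude to justify $\Phi_{\nu_{n}}(\bc_k)\to\Phi_\nu(\bc_k)$ and hence the identification $v(\bc_k)=\lim\frac1n\log|P_n(\bc_k)|$. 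The fact you actually need --- $\tau_n$ has no zeros in $I$, so the zeros stay uniformly away from $\bc_k$ --- is precisely part of Lemma~\ref{lem:zeroesofnumeratornew}, proved there by Markov correction terms for the $L^2$-extremal problem rather than by counting sign changes; citing it repairs your argument. A secondary technical point: working at a finite $\bc_k$, the measures $\nu_n$ live on $\overline{\bbR}$ and need not be supported in a fixed compact subset of $\bbC$ (a single zero may drift to $\infty$), so the convergence of potentials and the meaning of the leading coefficient $c_n$ require care; the paper avoids this by using conformal invariance to place $\bc_k$ at $\infty$, so that all zeros lie in the compact set $\overline{\bbR}\setminus I\subset\bbR$, and you should do the same or treat the finitely many stray zeros separately.
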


\begin{theorem}\label{thm:CRegMain}
The following are equivalent:
	\begin{enumerate}[(i)]
		\item\label{it:CRegConstants3} For some $1 \le k \le g+1$,   for the subsequence $n(j)=j(g+1)+k$,
		\[
		\lim_{j\to\infty} \kappa_{n(j)}^{1/n(j)} =  \lambda_k^{1 / (g+1)};
		\]
		\item\label{it:CRegConstants} For all $1 \le k \le g+1$,   for the subsequence $n(j)=j(g+1)+k$,
		\[
		\lim_{j\to\infty} \kappa_{n(j)}^{1/n(j)} =  \lambda_k^{1 / (g+1)};
		\]
		\item\label{it:CRegConstants2}
		\[\lim_{n\to\infty} \left( \prod_{\ell=1}^{g+1} \kappa_{n+\ell} \right)^{1/n} = \left( \prod_{k=1}^{g+1} \lambda_k \right)^{1/(g+1)}
		\]
\item\label{it:CRegBoundary} For q.e. $z\in\E$, we have
$\limsup_{n\to\infty} |\tau_{n}(z)|^{1/n} \leq 1$;
\item \label{it:upperhalfplane} For some $z\in\C_+$, $\limsup_{n\to\infty} |\tau_{n}(z)|^{1/n} \leq e^{\cG_\E(z,\bC)}$;
\item\label{it:plane}	For all $z\in\C$,
$\limsup_{n\to\infty}  |\tau_{n}(z)|^{1/n} \leq e^{\cG_\E(z,\bC)}$;
\item  \label{it:uniformlimit} Uniformly on compact subsets of $\C\setminus\R$,
$\lim_{n\to\infty} |\tau_{n}(z)|^{1/n}= e^{\cG_\E(z,\bC)}$.
\end{enumerate}
\end{theorem}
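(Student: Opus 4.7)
The theorem collects equivalent formulations of Stahl--Totik regularity: conditions (i)--(iii) on the leading coefficients $\kappa_n$, (iv) on the q.e.\ behavior of $|\tau_n|^{1/n}$ on $\E$, and (v)--(vii) on the growth off $\R$. I would establish them as a cycle, using three tools: the generalized Bernstein--Walsh inequality for rational functions with prescribed poles, the principle of descent for subharmonic functions, and a local residue identity identifying $\kappa_n$ with the leading Laurent coefficient of $\tau_n$ at the ``active'' pole $\bc_k$ for $n=j(g+1)+k$.

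The elementary pieces are (ii) $\Rightarrow$ (i), (vii) $\Rightarrow$ (vi), (vi) $\Rightarrow$ (v), and (ii) $\Leftrightarrow$ (iii). For the last, writing
\[
\left(\prod_{\ell=1}^{g+1}\kappa_{n+\ell}\right)^{1/n}=\exp\!\left(\frac{1}{n}\sum_{\ell=1}^{g+1}\log\kappa_{n+\ell}\right)
\]
and invoking the liminf bounds of Theorem~\ref{thm12} for each summand, one uses the fact that a sum of sequences with prescribed liminfs matches the sum of those liminfs only if every sequence converges to its own liminf; this yields (iii) $\Rightarrow$ (ii), while the reverse is a direct computation.

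For the potential-theoretic core I would introduce the globally subharmonic functions
\[
\Phi_n(z):=\frac{1}{n}\log|\tau_n(z)|+\sum_{k':\,\bc_{k'}\neq\infty}\frac{m_{n,k'}}{n}\log|z-\bc_{k'}|
\]
(with the obvious modification if $\infty\in\bC$), where $m_{n,k'}$ is the pole order of $\tau_n$ at $\bc_{k'}$, and the harmonic companion
\[
H(z):=\cG_\E(z,\bC)+\frac{1}{g+1}\sum_{k':\,\bc_{k'}\neq\infty}\log|z-\bc_{k'}|,
\]
which extends across each $\bc_{k'}$ to all of $\overline{\C}\setminus\E$. Since $m_{n,k'}/n\to 1/(g+1)$, the difference $\Phi_n-H$ equals $\frac{1}{n}\log|\tau_n|-\cG_\E(\cdot,\bC)$ plus an error whose singularities at the $\bc_{k'}$ have coefficients of order $O(1/n)$. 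The principle of descent applied to $\Phi_n-H$ (subharmonic on $\overline{\C}\setminus(\E\cup\bC)$), together with the universal lower bound of Theorem~\ref{thm11}, propagates any pointwise upper bound at a single $z_0\in\C_+$: this yields (v) $\Rightarrow$ (vii) $\Rightarrow$ (vi), and via boundary behavior on $\E$, (v) $\Rightarrow$ (iv).

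The link to the leading-coefficient conditions is the generalized Bernstein--Walsh inequality: for any rational $R$ with poles of orders $m_k$ at $\bc_k$ summing to $n$,
\[
|R(z)|\le\|R\|_{L^\infty(\E)}\exp\!\Big(\sum_k m_k G_\E(z,\bc_k)\Big),\quad z\in\overline{\C}\setminus\E,
\]
which follows from the maximum principle applied to $\log|R|-\sum_k m_k G_\E(\cdot,\bc_k)$. Combined with $\|\tau_n\|_{L^\infty(\E)}\ge\|\tau_n\|_{L^2(\mu)}=1$, this gives (iv) $\Rightarrow$ (vi) and forces $\|\tau_n\|_{L^\infty(\E)}^{1/n}\to 1$ under (iv). Moreover, for $n=j(g+1)+k$ the residue identity $(\bc_k-z)^{j+1}\tau_n(z)|_{z=\bc_k}=\kappa_n$, combined with the local expansion of $\cG_\E(z,\bC)$ near $\bc_k$ (which yields $e^{\cG_\E(z,\bC)}\sim\lambda_k^{1/(g+1)}|z-\bc_k|^{-1/(g+1)}$), converts the bound (vi) near $\bc_k$ into $\limsup\kappa_n^{1/n}\le\lambda_k^{1/(g+1)}$; with Theorem~\ref{thm12} this yields (ii). The same identity in reverse turns the equality in (i) at a single pole $\bc_k$ into convergence $\Phi_n(\bc_k)-H(\bc_k)\to 0$, and subharmonicity of $\Phi_n-H$ extends this to the upper bound required for (v), closing the cycle (i) $\Rightarrow$ (v) $\Rightarrow$ (vi) $\Rightarrow$ (ii) $\Rightarrow$ (i). The main obstacle is this final step: one must transfer leading-coefficient data at a single pole to global growth, and the $O(1/n)$ singular-coefficient errors of $\Phi_n-H$ at the $\bc_{k'}$ must be tracked through the principle of descent, so a suitably regularized rather than verbatim application is needed.
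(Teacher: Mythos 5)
Your overall architecture (subharmonic functions $\tfrac1n\log|\tau_n|$, principle of descent, Green-function minimality, extraction of $\kappa_n$ from the local behavior at the active pole) is close in spirit to the paper's, and the steps (ii)$\Leftrightarrow$(iii), (vi)$\Rightarrow$(ii), and the equivalences among (v), (vi), (vii) along a \emph{fixed} residue class are essentially right. But there is a genuine gap at the linchpin of your cycle, the step (i)$\Rightarrow$(v). Condition (i) gives information about $\kappa_{n}$ (equivalently, about the leading Laurent coefficient of $\tau_n$ at $\bc_k$) only for $n\equiv k\pmod{g+1}$, whereas (v) is a statement about $\limsup_{n\to\infty}$ over the \emph{entire} sequence. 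Every tool you invoke --- descent, Bernstein--Walsh, the maximum principle applied to $\Phi_n-H$ --- relates the behavior of a single $\tau_n$ (or a single residue-class subsequence) at different points $z$; none of them relates $\tau_n$ for $n\equiv k$ to $\tau_n$ for $n\equiv\ell$, since these are normalized at different poles and their extremal problems are a priori independent. The paper explicitly flags this single-subsequence-controls-all phenomenon as having no analog for orthogonal polynomials, and it bridges the gap with an operator-theoretic input you are missing: Corollary~\ref{cor:equivalence}, which shows that $\limsup_j \frac1{j(g+1)+k}\log|\tau_{j(g+1)+k}(z)|$ is independent of $k$ for $z\notin\R$. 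That corollary rests on the GMP-matrix eigensolution estimate \eqref{eq:May5} (self-adjointness, $\|(A-z)\vec\varphi_j\|$ computations, and the Bessel bound $\|\vec p_j\|\le\|A\|$), i.e., on orthogonality and spectral theory rather than potential theory. Without it your cycle does not close, and even your "elementary" (ii)$\Rightarrow$(i) plus the cycle never establishes the hard direction (i)$\Rightarrow$(ii). (A second, subordinate problem in the same step: knowing $\Phi_n(\bc_k)-H(\bc_k)\to0$ at one interior point does not by subharmonicity yield an upper bound elsewhere; one must first pass to limit functions $h$ along subsequences where $\nu_n$ and $\frac1n\log\kappa_n$ converge, prove the global inequality $h\ge\cG_\E$ as in Lemma~\ref{lem:Greenminimality}, and then use the maximum principle to upgrade equality at one pole to $h\equiv\cG_\E$ as in Lemma~\ref{lem:greenequality}.)

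A secondary gap is your (iv)$\Rightarrow$(vi). Bernstein--Walsh requires control of $\|\tau_n\|_{L^\infty(\E)}^{1/n}$, but (iv) is only a q.e.\ pointwise $\limsup$ bound; it gives no uniformity in $z$ and no sup-norm bound (and the inequality $\|\tau_n\|_{L^\infty(\E)}\ge1$ you cite points the wrong way, besides being unclear when $\supp\mu\setminus\E\ne\emptyset$). The correct route, which the paper takes, is again through the limit objects: the upper envelope theorem for the potentials $\Phi_{\nu_{n_\ell}}$, the q.e.\ maximum principle applied to $h-\cG_\E$, and a separate argument that $h-\cG_\E$ is bounded above near $\E$. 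I would recommend restructuring the proof as the paper does: first prove the fixed-$k$ equivalence of the $\kappa$-condition with the growth conditions along the subsequence $n(j)=j(g+1)+k$ (conformally normalizing $\bc_k=\infty$), and only then glue the residue classes together using Corollary~\ref{cor:equivalence}.
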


\begin{definition}
The measure $\mu$ is $\bC$-regular if it obeys one (and therefore all) of the assumptions of Theorem~\ref{thm:CRegMain}.
\end{definition}

In this terminology, Stahl--Totik regularity is precisely $(\infty)$-regularity, i.e., $\bC$-regularity for the special case $\supp \mu \subset \bbR$, $g=0$, $\bC = (\infty)$. Theorems~\ref{thm11}, \ref{thm12}, \ref{thm:CRegMain} are closely motivated by foundational results for Stahl--Totik regularity. A new phenomenon appears through the periodicity with which poles are taken in \eqref{rndefn} and the resulting subsequences $n(j) = j(g+1)+k$: since $\kappa_n$ is a normalization constant for $\tau_n$, it is notable that control of $\kappa_n$ along a single subsequence $n(j) = j(g+1)+k$ in Theorem~\ref{thm:CRegMain}.\eqref{it:CRegConstants3}  provides control over the entire sequence. This phenomenon doesn't have an exact analog for orthogonal polynomials, where $g=0$. We will also see below that this is essential in order to characterize the regularity of a GMP matrix using only the entries of the matrix itself and not its resolvents.

Moreover, we show that the regular behavior described by Theorem~\ref{thm:CRegMain} is independent of the set of poles $\bC$:

\begin{theorem} \label{thmST0}
Let $\bC_1, \bC_2$ be two finite sequences of elements from $\overline{\bbR} \setminus \supp \mu$, not necessarily of the same length. Then $\mu$ is $\bC_1$-regular if and only if it is $\bC_2$-regular.
\end{theorem}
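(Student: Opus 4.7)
The strategy is to reduce the general case to comparing two sequences that differ by a single pole, and then to prove that special case by direct comparison of the $L^2(\mu)$-extremal problems. Given $\bC_1$ and $\bC_2$, form the concatenation $\bC_0 = (\bC_1,\bC_2)$. I will show $\mu$ is $\bC_1$-regular iff $\mu$ is $\bC_0$-regular iff $\mu$ is $\bC_2$-regular. Each of these equivalences follows by iterating the following single-pole lemma: if $\bC = (\bc_1,\dots,\bc_{g+1})$ and $\bC' = (\bc_1,\dots,\bc_{g+1},\bc_{g+2})$ with $\bc_{g+2}\in\overline{\bbR}\setminus\supp\mu$ a new pole, then $\mu$ is $\bC$-regular iff $\mu$ is $\bC'$-regular.

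To establish the single-pole lemma, I would compare the extremal values $\kappa_n^{\bC}$ and $\kappa_{n'}^{\bC'}$ via the characterization in \eqref{eq:L2extremal}. The key mechanism: multiplication by $(\bc_{g+2}-z)^{\pm 1}$ (or by $z^{\pm 1}$ when $\bc_{g+2}=\infty$) is a bounded, invertible multiplier on $L^2(\mu)$, since $\bc_{g+2}\notin\supp\mu$. This operation sends a rational function with poles in $\bC$ to a rational function with poles in $\bC'$, and conversely clears a $\bc_{g+2}$-pole. Applied to near-extremizers of the $\bC$-problem along a subsequence $n(j)=j(g+1)+k$, these maps produce valid trial functions for the $\bC'$-problem along the matched subsequence $n'(j)=j(g+2)+k$, and vice versa. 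The resulting two-sided inequalities on $(\kappa_n^{\bC})^{1/n}$ and $(\kappa_{n'}^{\bC'})^{1/n'}$, combined with the universal lower bound of Theorem~\ref{thm12}, force equality in Theorem~\ref{thm12} for $\bC$ iff it holds for $\bC'$.

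On the potential-theoretic side, the matching identity between $\lambda_k^{\bC}$ and $\lambda_k^{\bC'}$ (as defined in \eqref{6aug5}), expressed in terms of $\gamma_\E^k$ and the cross-terms $G_\E(\bc_k,\bc_\ell)$, must mirror the asymptotic correction introduced by $(\bc_{g+2}-z)^{\pm 1}$. This identity follows from the symmetry $G_\E(z,w)=G_\E(w,z)$ and the singularity expansion defining $\gamma_\E^k$. To streamline bookkeeping, I would use the equivalence of characterizations \eqref{it:CRegConstants3} and \eqref{it:upperhalfplane} of Theorem~\ref{thm:CRegMain}, which reduce $\bC$-regularity to control at a single subsequence $k$ or at a single representative point $z_0\in\bbC_+$, avoiding the need to simultaneously handle all indices $k$ in both indexings.

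\textbf{Main obstacle.} The principal technical difficulty is the mismatch of periodicities (period $g+1$ versus $g+2$) and the associated disparity in how the multiplicity of $\bc_{g+2}$ grows: it grows linearly in the $\bC'$-basis but is absent from $\bC$. The correction factor incurred when transforming a $\bC$-trial function into a $\bC'$-candidate has an asymptotic exponential rate at each $z$ that must be computed exactly to match the contribution $G_\E(z,\bc_{g+2})$ in \eqref{Glincomb}, and similarly the normalization factor against $\gamma_\E^{g+2}$ and the cross-terms $G_\E(\bc_k,\bc_{g+2})$ in \eqref{6aug5}. Making this matching precise is the heart of the argument; everything else is bookkeeping.
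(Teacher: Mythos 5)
Your high-level reduction (pass to a common refinement and change one pole at a time) is sound and parallels the paper's own chaining argument, modulo the minor point that the concatenation $(\bC_1,\bC_2)$ may contain repetitions and must be deduplicated to fit the standing hypotheses on $\bC$. The problem is that your single-pole lemma --- the entire content of the theorem --- is not actually proved, and the mechanism you propose for it cannot work as stated. The extremal problem \eqref{eq:L2extremal} is a maximization, so exhibiting trial functions only bounds $\kappa_n$ \emph{from below}; but by Theorem~\ref{thm12} the lower bound is automatic, and regularity is equivalent to the matching \emph{upper} bound $\limsup_j \kappa_{n(j)}^{1/n(j)} \le \lambda_k^{1/(g+1)}$. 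To transfer an upper bound you must convert the $\bC'$-extremizer $\tau^{\bC'}_{n'(j)}$, which carries a pole of order $\sim j$ (not order one) at $\bc_{g+2}$, into a competitor for a $\bC$-problem. A single multiplication by $(\bc_{g+2}-z)$ does not do this; multiplying by $(\bc_{g+2}-z)^{j}$ does, but it distorts $\|\cdot\|_{L^2(d\mu)}$ by a factor lying between $\bigl(\inf_{x\in\supp\mu}|\bc_{g+2}-x|\bigr)^{j}$ and $\bigl(\sup_{x\in\supp\mu}|\bc_{g+2}-x|\bigr)^{j}$, and changes the leading coefficient at $\bc_k$ by $|\bc_{g+2}-\bc_k|^{j}$. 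None of these quantities has anything to do with $e^{jG_\E(\bc_k,\bc_{g+2})}$, which is exactly the factor by which $\lambda_k^{\bC}$ and $\lambda_k^{\bC'}$ differ; the loss $(\sup/\inf)^j$ is genuinely exponential and does not vanish in the $n$-th root. Even the seemingly easy direction fails at the level of leading coefficients: the trivial inclusion of trial spaces gives $\kappa^{\bC}_{n(j)}\le\kappa^{\bC'}_{n'(j)}$, which combined with $\bC'$-regularity yields only $\limsup_j\frac1j\log\kappa^{\bC}_{n(j)}\le\log\lambda^{\bC}_k+G_\E(\bc_k,\bc_{g+2})$, strictly weaker than needed.

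This is precisely why the paper does \emph{not} argue through the leading coefficients. It uses the q.e.\ boundary characterization (Theorem~\ref{thm:CRegMain}\eqref{it:CRegBoundary}), conformally maps the retained pole to $\infty$, and compares the rational system with the ordinary orthonormal polynomials: one direction expands $p_n\in\cL_{n(g+1)}$ in the basis $\{\tau_m\}$ with $\ell^2$-normalized coefficients to get the polynomial bound \eqref{19jul1}; the other multiplies $\tau_n=P_n/R_{g+1}^{j+1}$ by $Q_\epsilon^{j+1}$, where $Q_\epsilon$ is an auxiliary polynomial with $1-\epsilon\le Q_\epsilon R_{g+1}\le 1+\epsilon$ on $\E$, so that pointwise values on $\E$ are distorted only by $\bigl(\tfrac{1+\epsilon}{1-\epsilon}\bigr)^{j+1}$ while the degree inflation $O(n)$ is harmless for $n$-th root asymptotics of pointwise values (though it would be fatal for leading coefficients). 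If you want to salvage your plan, you must either import this $Q_\epsilon$ device and switch to a pointwise characterization, or supply an entirely different argument for the exact exponential rate you acknowledge as ``the heart of the argument'' but leave unproved.
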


\begin{corollary} \label{thmST1}
Let $\supp \mu \subset \bbR$. Let $\bC$ be a finite sequence of elements from $\overline{\bbR} \setminus \supp \mu$. Then $\mu$ is $\bC$-regular if and only if it is Stahl--Totik regular.
\end{corollary}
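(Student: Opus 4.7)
The plan is to reduce this corollary to Theorem~\ref{thmST0} by identifying Stahl--Totik regularity with a specific instance of $\bC$-regularity, so there is essentially no new work to do.

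First, I would recall the observation made immediately after Theorem~\ref{thm:CRegMain}: when $\supp \mu \subset \bbR$, Stahl--Totik regularity is by definition $(\infty)$-regularity, corresponding to the choice $g=0$ and $\bC_0 = (\infty)$. The hypothesis $\supp \mu \subset \bbR$ means that $\mu$ is compactly supported on the real line (since $\supp \mu$ is taken inside $\overline{\bbR}$), so in particular $\infty \in \overline{\bbR} \setminus \supp \mu$. Thus the singleton $\bC_0 = (\infty)$ is an admissible choice of pole sequence in the sense required by Theorem~\ref{thmST0}. One should also verify that in this special case the orthonormal rational functions $\tau_n$ produced by Gram--Schmidt from \eqref{rndefn} are the standard orthonormal polynomials, and that the quantities $\kappa_n$, $\lambda_k$, and $\cG_\E(z,\bC)$ reduce to the classical leading coefficients, capacity, and Green function at infinity, so that the equivalences in Theorem~\ref{thm:CRegMain} recover the usual characterizations of Stahl--Totik regularity; this is indeed the case.

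Given this identification, I would apply Theorem~\ref{thmST0} with $\bC_1 = \bC$ (the given sequence) and $\bC_2 = \bC_0 = (\infty)$, both of which are finite sequences of elements from $\overline{\bbR} \setminus \supp \mu$. Theorem~\ref{thmST0} then asserts that $\bC$-regularity is equivalent to $(\infty)$-regularity, which by the first step is the same as Stahl--Totik regularity.

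There is no real obstacle here — the corollary is an immediate specialization of Theorem~\ref{thmST0}. All the substantive content lies upstream in establishing the independence of $\bC$-regularity from the choice of $\bC$.
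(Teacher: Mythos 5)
Your proposal is correct and matches the paper's proof exactly: the paper also derives the corollary by taking $\bC_2=(\infty)$ in Theorem~\ref{thmST0}, using the identification (already made in the introduction) of Stahl--Totik regularity with $(\infty)$-regularity. No further comment is needed.
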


Thus, Theorem~\ref{thm:CRegMain} should not be seen as describing equivalent conditions for a new class of measures, but rather a new set of regular behaviors for the familiar class of Stahl--Totik regular measures.

We consistently work with poles on $\overline{\bbR}$ since our main interest is tied to self-adjoint problems. Some of our results are in a sense complementary to the setting of \cite[Section 6.1]{StahlTotik92}, where poles are allowed in the complement of the convex hull of $\supp \mu$, and the behavior of orthogonal rational functions is considered with respect to a Stahl--Totik regular measure. Due to this, it is natural to expect that these results hold more generally, for measures on $\bbC$ and general collections of poles and M\"obius transformations. Moreover, in our setup the poles are repeated exactly periodically, but we expect this can be generalized to a sequence of poles which has a limiting average distribution. Related questions for orthogonal rational functions were also studied by \cite{BuGoHeNjORFBook,DeckLub12}.

As noted in \cite[Section 6.1]{StahlTotik92}, poles in the gaps of $\supp \mu$ can cause interpolation defects in the problem of interpolation by rational functions. In our work, these interpolation defects show up as possible reductions in the order of the poles.  For example, consider $\bC = (\infty,0)$. Then, by construction, $\tau_{2j+1}$ is allowed a pole at $0$ of order at most $j$. However, if $\mu$ is symmetric with respect to $z\mapsto -z$, the functions $\tau_n$ will have an even/odd symmetry. Since $\tau_{2j+1}$ contains a nontrivial multiple of $z^{j+1}$, it follows that $\tau_{2j+1}(z) = (-1)^{j+1} \tau_{2j+1}(-z)$.  By this symmetry, the actual order of the pole at $0$ is $j+1-k$ for some even $k$, so it cannot be equal to $j$ (it will follow from our results that in this case, the order of the pole is $j-1$). The same effect can be seen for the pole at $\infty$ for $\bC = (0,\infty)$. In the polynomial case, this does not occur: $p_n$ always has a pole at $\infty$ of order exactly $n$.

We will consider at once the distribution of zeros of $\tau_n$ and the possible reductions in the order of the poles. We will prove that all zeros of $\tau_n$ are real and simple, and that $n-g \le \deg \tau_n \le n$. We define the normalized zero counting measure
\[
\nu_n = \frac 1n \sum_{w: \tau_n(w)=0}  \delta_w.
\]
Although we normalize by $n$, $\nu_n$ may not be a probability measure: however $1 - g/n \le \nu_n(\overline{\bbR}) \le 1$. Therefore,  normalizing  by $\deg \tau_n$ instead of by $n$ would not affect the limits as $n\to\infty$.

We will now describe the weak limit behavior of the measures $\nu_n$ as $n\to\infty$. To avoid pathological cases, we assume that $\E$ is not polar; in that case, denoting   by $\omega_\E( dx,w)$ the harmonic measure for the domain $\overline{\bbC} \setminus \E$ at the point $w$, we define the probability measure on $\E$,
\[
\rho_{\E,\bC} = \frac 1{g+1} \sum_{j=1}^{g+1} \omega_\E(\dd x,\bc_j).
\]
The results below describe weak limits of measures in the topology dual to $C(\overline{\bbR})$.

\begin{theorem} Let $\mu$ be a probability measure on $\overline{\bbR}$. Assume that $\E$ is not a polar set.  \label{thmDOSlimit}
\begin{enumerate}[(a)]
\item If $\mu$ is $\bC$ regular, then $\wlim_{n\to\infty} \nu_n = \rho_{\E,\bC}$.
\item If $\wlim_{n\to\infty} \nu_n =\rho_{\E,\bC}$, then $\mu$ is $\bC$ regular or there exists a polar set $X \subset \E$ such that $\mu(\overline{\bbR} \setminus X) = 0$.
\end{enumerate}
\end{theorem}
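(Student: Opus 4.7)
The plan is to exploit the identity
\[
\frac{1}{n}\log|\tau_n(z)| = \frac{1}{n}\log|c_n| - U^{\nu_n}(z) - \sum_{k'\ne k^*}\frac{e_{k',n}}{n}\log|z-\bc_{k'}|,
\]
where $U^\nu(z)=\int\log\frac{1}{|z-w|}\,d\nu(w)$ is the logarithmic potential, $c_n$ is the leading coefficient of the numerator when $\tau_n=P_n/Q_n$ is written in lowest terms, $e_{k',n}$ is the order of the pole at $\bc_{k'}$, and $k^*$ (if present) is the index with $\bc_{k^*}=\infty$. From the structure of $\cL_n$ one has $e_{k',n}\le j+1$ for $k'\le k$ and $e_{k',n}\le j$ for $k'>k$, while $\sum_{k'} e_{k',n}=\deg\tau_n\ge n-g$; these combine to force $e_{k',n}/n\to 1/(g+1)$ uniformly in $k'$. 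Standard balayage / harmonic-measure identities yield
\[
U^{\rho_{\E,\bC}}(z) = -\cG_\E(z,\bC) - \frac{1}{g+1}\sum_{k'\ne k^*}\log|z-\bc_{k'}| + C,\qquad z \in \bbC\setminus\E,
\]
for an explicit constant $C=C(\E,\bC)$, which, since $G_\E(\cdot,\bc_{k'})\equiv 0$ q.e.\ on $\E$, reduces q.e.\ on $\E$ to $U^{\rho_{\E,\bC}}(x) = -\frac{1}{g+1}\sum_{k'\ne k^*}\log|x-\bc_{k'}| + C$ with the same $C$.

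For (a), Theorem~\ref{thm:CRegMain}.(vii) supplies $\frac{1}{n}\log|\tau_n(z)|\to\cG_\E(z,\bC)$ uniformly on compacts of $\bbC\setminus\bbR$. Since the zeros of $\tau_n$ lie in a fixed compact subset of $\bbR$, every weakly convergent subsequence $\nu_{n_m}\to\nu^*$ produces a probability measure on that compact set and $U^{\nu_{n_m}}(z)\to U^{\nu^*}(z)$ for $z\in\bbC\setminus\bbR$. Rearranging the identity, $\frac{1}{n_m}\log|c_{n_m}|$ converges to $C_*:=\cG_\E(z,\bC)+U^{\nu^*}(z)+\frac{1}{g+1}\sum_{k'\ne k^*}\log|z-\bc_{k'}|$, and since the LHS is independent of $z$, the closed form for $U^{\rho_{\E,\bC}}$ forces $U^{\nu^*}(z) - U^{\rho_{\E,\bC}}(z) \equiv C_* - C$ on $\bbC\setminus\bbR$; matching the $-\log|z|$ asymptotics as $z\to\infty$ pins $C_*=C$. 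Unicity of a compactly supported measure on $\bbR$ from its potential on $\bbC\setminus\bbR$ identifies $\nu^*=\rho_{\E,\bC}$, and as every subsequential limit agrees, $\nu_n\to\rho_{\E,\bC}$.

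For (b), assume the weak limit hypothesis together with $\mu$ not being polar-concentrated, and aim to verify Theorem~\ref{thm:CRegMain}.(v). For $z\in\bbC\setminus\bbR$, weak convergence gives $U^{\nu_n}(z)\to U^{\rho_{\E,\bC}}(z)$; the Lower Envelope Theorem upgrades this to q.e.\ on $\bbC$, so off a polar set $F$ the identity reduces at $x\in\E\setminus F$ to $\frac{1}{n}\log|\tau_n(x)|=\frac{1}{n}\log|c_n|-C+o(1)$. The non-polar-concentration hypothesis guarantees $\mu(\E\setminus F)>0$, so Egorov's theorem furnishes $A\subset\E\setminus F$ with $\mu(A)>0$ on which the convergence is uniform. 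Inserting the resulting uniform lower bound on $|\tau_n|^2$ into $1=\int|\tau_n|^2\,d\mu\ge\int_A|\tau_n|^2\,d\mu$ and taking $2n$th roots gives $\limsup\frac{1}{n}\log|c_n|\le C$. The matching lower bound $\liminf\frac{1}{n}\log|c_n|\ge C$ follows by applying Theorem~\ref{thm11} at any $z\in\bbC\setminus\bbR$ and using the identity with the known limits of $U^{\nu_n}(z)$ and $e_{k',n}/n$. Therefore $\frac{1}{n}\log|c_n|\to C$, whence $\frac{1}{n}\log|\tau_n(z)|\to\cG_\E(z,\bC)$ on $\bbC\setminus\bbR$, which is Theorem~\ref{thm:CRegMain}.(v).

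The main obstacle is the potential-theoretic bookkeeping: establishing the closed form for $U^{\rho_{\E,\bC}}$ via balayage (mixing the finite-pole formula with the equilibrium contribution when $\infty\in\bC$) and carefully tracking the polar exceptional sets where identities on $\E$ can fail, especially in the Egorov step of (b) where one must ensure that the combined exceptional set remains polar so that $\mu(A)>0$ is available. A secondary technical point is tightness of the zero distribution, needed to ensure that weak subsequential limits on $\overline{\bbR}$ carry no mass at $\infty$; this should follow from standard zero-location arguments for orthogonal rational functions.
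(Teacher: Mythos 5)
Your part (a) is essentially the paper's argument and is fine modulo bookkeeping: you compare the Riesz-type representation of $\tfrac1n\log|\tau_n|$ with the closed form of $U^{\rho_{\E,\bC}}$ and identify subsequential weak limits through their potentials, which is exactly what the paper does (it just organizes the computation along the residue subsequences $n\equiv k \pmod{g+1}$, where $\tfrac1n\log\kappa_n$ has a genuine limit, rather than with a single ``leading coefficient'' $c_n$ for the whole sequence).

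Part (b), however, has a genuine gap, and it is at the crux of the proof. You assert that the Lower Envelope Theorem upgrades $U^{\nu_n}(z)\to U^{\rho_{\E,\bC}}(z)$ to quasi-everywhere convergence on $\bbC$, and then apply Egorov to get a set $A$ of positive $\mu$-measure on which $|\tau_n|$ is \emph{uniformly bounded below} by $e^{n(\frac1n\log|c_n|-C+o(1))}$, so that $1\ge\int_A|\tau_n|^2\,d\mu$ forces $\limsup\frac1n\log|c_n|\le C$. The Lower Envelope Theorem does not give two-sided convergence: it gives $\liminf_{n} U^{\nu_n}(x)=U^{\rho_{\E,\bC}}(x)$ q.e., while the Principle of Descent gives $\liminf_n U^{\nu_n}(x)\ge U^{\rho_{\E,\bC}}(x)$ everywhere. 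Neither controls $\limsup_n U^{\nu_n}(x)$, i.e.\ neither prevents $|\tau_n(x)|$ from being anomalously \emph{small} for infinitely many $n$ at a fixed $x$ (it literally vanishes at the zeros of $\tau_n$, which accumulate on $\E$). So the pointwise hypothesis needed for Egorov fails, and the uniform lower bound on $|\tau_n|$ over $A$ along the full sequence is exactly the statement you cannot get. The envelope theorem points the wrong way for your integral estimate.

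The viable direction — the one the paper takes — is the reverse of yours: the $L^2$ normalization $\int|\tau_n|^2\,d\mu=1$ yields, via a Schnol/Borel--Cantelli argument, a pointwise \emph{upper} bound $\limsup_n\frac1n\log|\tau_n(x)|\le0$ for $\mu$-a.e.\ $x$; and if $\mu$ is \emph{not} regular, one passes to a subsequence with $\frac1{n_\ell}\log\kappa_{n_\ell}\to\alpha>\frac{\log\lambda_k}{g+1}$ and uses the envelope theorem in its correct ($\limsup\Phi_{\nu_{n_\ell}}=\Phi_\nu$ q.e.) form, together with $\cG_\E\ge0$, to get the pointwise \emph{lower} bound $\limsup_n\frac1n\log|\tau_n(x)|\ge d>0$ for q.e.\ $x$. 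The two bounds are incompatible off a polar set, which forces $\mu$ to be carried by that polar set. If you reorganize (b) along these lines, the envelope theorem is used only where its one-sided conclusion suffices, and the Egorov step disappears entirely.
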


We now turn to matrix representations of self-adjoint operators. Fix a sequence $\bC = (\bc_1,\dots,\bc_{g+1})$ such that $\bc_{k_\infty} = \infty$ for some {$1\leq k_\infty\leq g+1$}. A half-line GMP matrix \cite{YuditskiiAdv} is the matrix representation for multiplication by $x$ in the basis $\{\tau_n \}_{n=0}^\infty$ {for this sequence $\bC$}; its matrix elements are
\[
A_{mn} = \int \overline{\tau_m(x)} x \tau_n(x) \,d\mu(x).
\]
The condition that $\bc_{k_\infty}  = \infty$ for some $k_\infty$ guarantees that $A_{mn} = 0$ for $\lvert m - n \rvert  > g+1$, so these matrix elements generate a bounded operator $A$ on $\ell^2(\bbN_0)$ such that $A_{mn} = \langle e_m,  A e_n \rangle$, where $(e_n)_{n=0}^\infty$ denotes the standard basis of $\ell^2(\bbN_0)$. We say that $A \in \A(\bC)$.

GMP matrices have the property that some of their resolvents are also GMP matrices; namely, for any $k \neq k_\infty$, $(\bc_k - A)^{-1} \in \A(f(\bC))$ where $f$ is the M\"obius transform $f: z \mapsto (\bc_k - z)^{-1}$ and $f(\bC) = (f(\bc_1), \dots, f(\bc_{g+1}))$.

Note that the special case $g=0$, $\bC = (\infty)$ gives precisely a Jacobi matrix. A Jacobi matrix is said to be regular if it is obtained by this construction from a regular measure; analogously, we will call a GMP matrix regular if it is obtained from a regular measure. Just as  regularity of a Jacobi matrix can be characterized in terms of its off-diagonal entries, we will show that regularity of a GMP matrix can be characterized in terms of its entries in the outermost nontrivial diagonal. We will also obtain a GMP matrix analog of the inequality \eqref{6aug2}.

The GMP matrix has an additional block matrix structure; in particular, for a GMP matrix with $\bc_{k_\infty} = \infty$, on the outermost nonzero diagonal $m=n - g-1$, the only nonzero terms appear for $n = j(g+1)+k_\infty$, and those are strictly positive. Thus, we denote
\begin{equation} \label{betaDefn}
\beta_j =  \langle e_{j(g+1)+k_\infty}, Ae_{(j+1)(g+1)+k_\infty} \rangle
\end{equation}

\begin{theorem} \label{thmRegularityGMP}
Fix a probability measure $\mu$ with $\supp \mu \subset \bbR$ and a sequence $\bC = (\bc_1,\dots, \bc_{g+1})$ with $\bc_k = \infty$. Then
\begin{equation}\label{16aug3}
\limsup_{j \to\infty} \left( \prod_{\ell=1}^j \beta_\ell \right)^{1/ j } \le \lambda_{k_\infty}^{-1}.
\end{equation}

Moreover, the measure $\mu$ is Stahl--Totik regular if and only if
\begin{equation}\label{16aug4}
\lim_{j \to\infty} \left( \prod_{\ell=1}^j \beta_\ell \right)^{1/ j } =  \lambda_{k_\infty}^{-1}.
\end{equation}
\end{theorem}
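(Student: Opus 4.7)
The plan is to reduce the statement about $\prod_{\ell=1}^j \beta_\ell$ to a telescoping expression involving the leading coefficients $\kappa_{n(j)}$, where $n(j)=j(g+1)+k$, and then invoke Theorems~\ref{thm12} and \ref{thm:CRegMain} together with Corollary~\ref{thmST1}.

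First, I would establish the matrix-element identity $\beta_j = \kappa_{n(j)}/\kappa_{n(j+1)}$. Since $\bc_k=\infty$, $r_{n(j)}(z)=z^{j+1}$, so $x\,r_{n(j)} = r_{n(j+1)}$. Starting from the defining decomposition $\tau_{n(j)} = \kappa_{n(j)} r_{n(j)} + h$ with $h\in\cL_{n(j)-1}$, the elementary identities
\[
x(\bc_{k'}-z)^{-j'-1} = \bc_{k'}(\bc_{k'}-z)^{-j'-1} - (\bc_{k'}-z)^{-j'}, \qquad x\, z^{j'+1} = z^{j'+2}
\]
show that $x\cL_{n(j)-1} \subseteq \cL_{n(j)+g} = \cL_{n(j+1)-1}$. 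Hence $x\tau_{n(j)} = \kappa_{n(j)} r_{n(j+1)} + (\text{element of }\cL_{n(j+1)-1})$, and orthogonality of $\tau_{n(j+1)}$ to $\cL_{n(j+1)-1}$ combined with $\langle \tau_{n(j+1)}, r_{n(j+1)}\rangle = 1/\kappa_{n(j+1)}$ yields $\langle \tau_{n(j+1)}, x\tau_{n(j)}\rangle = \kappa_{n(j)}/\kappa_{n(j+1)}$. Reality of the orthonormal system and self-adjointness of $A$ then give the asserted identity.

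Telescoping gives $\prod_{\ell=1}^j \beta_\ell = \kappa_{n(1)}/\kappa_{n(j+1)}$, so
\[
\frac{1}{j}\log \prod_{\ell=1}^j \beta_\ell = o(1) - \frac{n(j+1)}{j}\cdot \frac{\log \kappa_{n(j+1)}}{n(j+1)}.
\]
Since $n(j+1)/j\to g+1$, Theorem~\ref{thm12} directly yields $\limsup_j (\prod_{\ell=1}^j \beta_\ell)^{1/j} \le \lambda_k^{-1}$, proving \eqref{16aug3}. For the characterization, Corollary~\ref{thmST1} identifies Stahl--Totik regularity with $\bC$-regularity, and condition~\eqref{it:CRegConstants} of Theorem~\ref{thm:CRegMain} identifies $\bC$-regularity with $\lim_j \kappa_{n(j)}^{1/n(j)} = \lambda_k^{1/(g+1)}$; via the same identity above, this is in turn equivalent to $\lim_j (\prod_{\ell=1}^j \beta_\ell)^{1/j} = \lambda_k^{-1}$, which is \eqref{16aug4}.

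I do not anticipate a serious conceptual obstacle. The work is concentrated in verifying the identity $\beta_j = \kappa_{n(j)}/\kappa_{n(j+1)}$, which relies crucially on $\bc_k=\infty$ so that multiplication by $x$ increases the basis index rather than leaving things within $\cL_{n(j)}$. Passage between $1/j$-th and $1/n(j)$-th root asymptotics is safe because $n(j+1)/j$ converges to the positive constant $g+1$; the polar case $\lambda_k=+\infty$ is handled automatically via the conventions in Theorem~\ref{thm12} and in the definition of regularity.
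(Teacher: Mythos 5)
Your proposal is correct and follows essentially the same route as the paper: the identity $\beta_j = \kappa_{n(j)}/\kappa_{n(j+1)}$ you derive is exactly the paper's Lemma~\ref{claim:1} (specialized to $\bc_k=\infty$, with the same orthogonality argument), and the telescoping product combined with Theorem~\ref{thm12}, Theorem~\ref{thm:CRegMain}\eqref{it:CRegConstants}, and Corollary~\ref{thmST1} is precisely the paper's proof. Your handling of the exponent conversion $n(j+1)/j\to g+1$ and of the polar case is also consistent with the paper's conventions.
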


The proof will use a relation between the sequence $\{\beta_j\}_{j=1}^\infty$ and the constants $\{\kappa_{j(g+1)+k_\infty}\}_{j=1}^\infty$. 
In particular, the characterization of regularity in Theorem~\ref{thmRegularityGMP} is made possible by the characterization of regularity in terms of the subsequence $\{ \kappa_{j(g+1)+k}\}_{j=1}^\infty$ for any single $k$. Theorem~\ref{thmRegularityGMP} also corroborates the perspective that regularity of the measure is the fundamental notion which manifests itself equally well in many different matrix representations.

Since the resolvents {$(\bc_k - A)^{-1}$} are also GMP matrices and their measures are pushforwards of the original measure, they are also regular GMP matrices; in this sense, Theorem~\ref{thmRegularityGMP} provides $g+1$ criteria for regularity, one corresponding to each subsequence $n(j) = j(g+1)+{k}$, $1\leq k\leq g+1$.

As an application of this theory, we show that it provides a proof of a theorem for Jacobi matrices originally conjectured by Simon \cite{SimonJAT09}. Let  $\E \subset \bbR$ be a compact finite gap set,
\begin{equation}\label{finitegapset}
\E=[\bb_0,\ba_0]\setminus\bigcup_{k=1}^g (\ba_k,\bb_k),
\end{equation}
and denote by $\cT_\E^+$ the set of almost periodic half-line Jacobi matrices with $\sigma_\ess(J) = \sigma_\ac(J) = \E$ \cite{ChriSiZiFiniteGap,GeHoMiTe}. Through algebro-geometric techniques and the reflectionless property, this class of Jacobi matrices has been widely studied for their spectral properties and quasiperiodicity. They also provide natural reference points for perturbations, which is our current interest. On bounded half-line Jacobi matrices $J$, we consider the metric
\begin{equation}\label{18aug1}
d(J, \tilde J) = \sum_{k=1}^\infty e^{-k} ( \lvert a_k - \tilde a_k \rvert + \lvert b_k - \tilde b_k \rvert).
\end{equation}
On norm-bounded sets of Jacobi matrices, convergence in this metric corresponds to strong operator convergence. However, instead of distance to a fixed Jacobi matrix $\tilde J$, we will consider the distance to $\cT_\E^+$,
\[
d(J, \cT_\E^+) = \inf_{\tilde J \in \cT_\E^+} d(J, \tilde J)  = \min_{\tilde J \in \cT_\E^+} d(J, \tilde J).
\]
Denote by $S_+$ the right shift operator on $\ell^2(\bbN_0)$, $S_+ e_n = e_{n+1}$. The condition  $d( (S_+^*)^m J S_+^m, \cT_\E^+)  \to 0$ as $m\to\infty$ is called the Nevai condition. For $\E = [-2,2]$, this corresponds simply to the commonly considered condition $a_n \to 1$, $b_n \to 0$ as $n\to\infty$  \cite{Nevai}. In general, as a consequence of \cite{Remling11}, the Nevai condition implies regularity. The converse is false; however:

\begin{theorem}\label{conj1}
If $\E \subset \bbR$ is a compact finite gap set and $J$ is a regular Jacobi matrix with $\sigma_\ess(J) = \E$, then
\begin{equation}\label{24jun2}
\lim_{N\to\infty}\frac 1N \sum_{m=1}^N d( (S_+^*)^m J S_+^m, \cT_\E^+) = 0.
\end{equation}
\end{theorem}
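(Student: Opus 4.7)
The strategy is to transport the problem to the GMP matrix setting and exploit Theorem~\ref{thmRegularityGMP} on each of the $g+1$ outermost--diagonal subsequences at once.

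First, I would choose a pole $\bc_\ell$ in each gap $(\ba_\ell,\bb_\ell)$ of $\E$ and set $\bc_{g+1}=\infty$, giving $\bC=(\bc_1,\dots,\bc_g,\infty)$. Let $\mu$ be the spectral measure of $J$; since $J$ is regular with $\sigma_\ess(J)=\E$, Corollary~\ref{thmST1} shows that $\mu$ is also $\bC$--regular. Let $A\in\A(\bC)$ be the GMP matrix associated to $(\mu,\bC)$. There is a natural one--sided GMP isospectral torus $\cT_\E^{\GMP,+}\subset\A(\bC)$ in continuous bijection with $\cT_\E^+$ -- each $\tilde J\in\cT_\E^+$ maps to the GMP matrix of its spectral measure, and the metric \eqref{18aug1} and its GMP analogue are equivalent on norm--bounded sets. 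It therefore suffices to prove the GMP Ces\`aro--Nevai statement $\tfrac1N\sum_{m=1}^N d\bigl((S_+^*)^m A S_+^m,\cT_\E^{\GMP,+}\bigr)\to 0$.

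Next I would extract the content of regularity. By Theorem~\ref{thmRegularityGMP} applied to $(\mu,\bC)$, the outermost--diagonal sequence $\{\beta_j\}$ of $A$ obeys $\lim_j \tfrac1j\sum_{\ell=1}^j\log\beta_\ell=-\log\lambda_{g+1}$. Applying the same theorem to each resolvent $(\bc_k-A)^{-1}$ -- itself a GMP matrix for the M\"obius pushforward of $\mu$, still regular by Theorem~\ref{thmST2} -- yields $g$ further Ces\`aro--log equalities on the outermost diagonals associated to the other poles. Inside $\cT_\E^{\GMP,+}$ each of these $\log\beta^{(k)}$--sequences is almost periodic with mean exactly $-\log\lambda_k$ (a Thouless-type identity expressing $\log\lambda_k$ as an equilibrium integral against $\rho_{\E,\bC}$), so the $g+1$ Ces\`aro equalities say that $A$ matches the torus ``on average'' along every outermost diagonal.

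The final step is a compactness/right--limit argument. Any subsequence of the shifts $(S_+^*)^m A S_+^m$ has a strongly convergent sub--subsequence with two--sided GMP right limit $A^\#$; by Remling's theorem, $A^\#$ is reflectionless on $\Sigma_\ac(A^\#)$. If Ces\`aro--Nevai failed, some $\epsilon>0$ and a set $M_\epsilon\subset\N$ of positive upper Ces\`aro density would satisfy $d((S_+^*)^m A S_+^m,\cT_\E^{\GMP,+})>\epsilon$ for $m\in M_\epsilon$; passing to a convergent sub--subsequence along $M_\epsilon$ with limit $A^\#$, a Koopman--von Neumann type density--1 splitting transfers the $g+1$ Ces\`aro equalities to pointwise equalities for the $\beta$--statistics of $A^\#$, which by the Thouless-type identity forces $\sigma_\ac(A^\#)\supseteq\E$ and the reflectionless property on all of $\E$, so $A^\#\in\cT_\E^{\GMP}$ and its one--sided restriction lies in $\cT_\E^{\GMP,+}$, contradicting $d>\epsilon$. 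I expect the main obstacle to be exactly this translation of an averaged regularity equality into pointwise information at a right limit along a prescribed positive--density subsequence; it requires combining all $g+1$ equalities from the resolvent structure of GMP matrices, the Thouless-type identity pinning down $\log\lambda_k$, and Remling's deep reflectionless theorem. The finite--gap hypothesis enters crucially, since it makes $\cT_\E^+$ and $\cT_\E^{\GMP,+}$ finite--dimensional compact tori, and combined with reflectionlessness forces membership from spectrum $\subseteq\E$.
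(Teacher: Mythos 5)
Your reduction to the GMP setting and your use of Theorem~\ref{thmRegularityGMP} on $A$ and its resolvents match the paper's starting point, but the core of your argument has a genuine gap. The $g+1$ statements you extract from regularity are Ces\`aro--\emph{log} equalities, $\frac1j\sum_{\ell=1}^j\log\bigl(\lambda_k\Lambda^{(k)}_\ell\bigr)\to 0$, and the sequence $\log\bigl(\lambda_k\Lambda^{(k)}_\ell\bigr)$ is \emph{not} sign-definite: the universal inequality \eqref{16aug3} constrains only the geometric mean, not individual entries, so individual $\Lambda_\ell$ may oscillate above and below $\lambda_k^{-1}$ with cancellation in the average. A Koopman--von Neumann splitting applies only to nonnegative sequences with vanishing Ces\`aro means; it cannot convert these averaged equalities into pointwise ``$\beta$-statistics'' along a density-one set. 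Already for $g=0$, $\E=[-2,2]$, regularity means $(\prod a_\ell)^{1/n}\to1$ and does not force $a_\ell\to1$ along density one without further argument. Converting the averaged leading-coefficient information into coefficient-wise information is precisely the hard step, and the known mechanism is a sum rule: the paper passes to the block Jacobi matrix $\Delta_\E(A)$ (Lemma~\ref{lem:BlockJacobiRegular}, which requires the Ahlfors-function poles $\bC_\E$ and the rank-one adjustment of Lemma~\ref{lemmarank1} so that $\bc_k\notin\sigma(J)$), invokes Simon's Theorem 3.1 of \cite{SimonJAT09} to get the genuinely nonnegative Ces\`aro decay $\frac1N\sum_\ell(\|\fv_\ell-I\|+\|\fw_\ell\|)\to0$, and only then transfers this back to the Jacobi coefficients via Yuditskii's Jacobi flow. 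Your proposal contains no substitute for the sum-rule step.

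The right-limit argument also fails independently: Remling's theorem requires absolutely continuous spectrum, and regularity does not provide any. A regular measure on $\E$ can be purely singular (the paper notes that the Nevai condition implies regularity but not conversely), so the right limit $A^\#$ need not be reflectionless on any portion of $\E$, and the conclusion $\sigma_\ac(A^\#)\supseteq\E$ cannot be forced. Finally, even granting a correct local statement on the GMP side, the passage back to $d\bigl((S_+^*)^mJS_+^m,\cT_\E^+\bigr)$ is not a routine metric equivalence; the paper needs the full Jacobi-flow correspondence \eqref{defjfg+}, the parametrization \eqref{eq:IsoParametrization} of $\cT_\E^+$ by characters, and careful bookkeeping of the sparse set where $\lambda_k\Lambda_k(A(m))$ is small, none of which your sketch supplies.
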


The condition \eqref{24jun2} is described as the Ces\`aro--Nevai condition; it was first studied by Golinskii--Khrushchev \cite{GolKru} in the OPUC setting with essential spectrum equal to $\partial\bbD$. Theorem~\ref{conj1} was conjectured by Simon \cite{SimonJAT09}  and proved in the special case when $\E$ is the spectrum of a periodic Jacobi matrix with all gaps open by using the periodic discriminant and techniques from Damanik--Killip--Simon \cite{DamKillipSimAnnals} to reduce to a block Jacobi setting. It was then proved by Kr\"uger \cite{Krueger10} by very different methods under the additional assumption $\inf_{n} a_n > 0$. While this is a common assumption in the ergodic literature, regular Jacobi matrices do not always satisfy it: \cite[Example 1.4]{Simon07} can easily be modified to give a regular Jacobi matrix with spectrum $[-2,2]$ and $\inf a_n = 0$. We prove Theorem~\ref{conj1} in full generality by applying Simon's strategy and, instead of the periodic discriminant and techniques from \cite{DamKillipSimAnnals}, using the Ahlfors function, GMP matrices, and techniques of Yuditskii \cite{YuditskiiAdv}.

For the compact finite gap set $\E \subset \bbR$, among all analytic functions $\overline{\bbC} \setminus \E \to \bbD$ which vanish at $\infty$, the Ahlfors function $\Psi$ takes the largest value of $\Re (z \Psi(z))\rvert_{z=\infty}$. The Ahlfors function has precisely one zero in each gap, denoted $\bc_k \in (\ba_k, \bb_k)$ for $1 \le k \le g$, a zero at $\bc_{g+1} = \infty$, and no other zeros; see also \cite[Chapter 8]{SimonBasicCompAna}. In particular, for the finite gap set $\E$, this generates a particularly natural sequence of poles $\bC_\E = (\bc_1, \dots, \bc_g, \infty)$.

The Ahlfors function was used by Yuditskii \cite{YuditskiiAdv} to define a discriminant for finite gap sets,
\begin{equation} \label{13aug1}
\Delta_\E(z)=\Psi(z)+\frac{1}{\Psi(z)}.
\end{equation}
This function is not equal to the periodic discriminant, but it has some similar properties and it is available more generally (even when $\E$ is not a periodic spectrum). Namely, $\Delta_\E$ extends to a meromorphic function on $\overline{\bbC}$ and $(\Delta_\E)^{-1}([-2,2]) = \E$. It was introduced by Yuditskii to solve the Killip--Simon problem for finite gap essential spectra. In fact, the discriminant is a rational function of the form
\begin{align}\label{eq:discrRational}
	\Delta_\E(z)  =\l_{g+1} z + d + \sum_{k=1}^g\frac{\l_k}{\bc_k-z}
\end{align}
for some $d \in \bbR$; in particular, we will explain that the constants $\lambda_j > 0$ in \eqref{eq:discrRational} match the general definition \eqref{6aug5}.

As a first glimpse of our proof of Theorem~\ref{conj1}, we note that it uses the following chain of implications. Starting with a regular Jacobi matrix with essential spectrum $\E$, by a change of one Jacobi coefficient, which does not affect regularity, we can assume that $\bc_k \notin \supp \mu$ (Lemma~\ref{lemmarank1}). Under this assumption, regularity of the Jacobi matrix implies regularity of the corresponding GMP matrix $A$ and the resolvents $(\bc_k - A)^{-1}$, $k=1,\dots, g$, which can be characterized in terms of their coefficients by Theorem~\ref{thmRegularityGMP}. By properties of the Yuditskii discriminant, this further implies regularity of the block Jacobi matrix $\Delta_\E(A)$. Let us briefly recall that a block Jacobi matrix is of the form
\begin{equation}\label{16aug7}
\bJ = \begin{bmatrix}
\fw_0 & \fv_0 & & & & \\
\fv_0^* & \fw_{1}& \fv_1 & &&\\
 & \fv_1^* & \fw_2 & \fv_2 & &\\
&  & \fv_2^* & \ddots&\ddots& \\
& &  & \ddots & &
\end{bmatrix}
\end{equation}
where $\fv_j$ and $\fw_j$ are $d \times d$ matrices, $\fw_j = \fw_j^*$,  and $\det \fv_j \ne 0$ for each $j$. Type 3 block Jacobi matrices have each $\fv_j$ lower triangular and positive on the diagonal. An extension of regularity to block Jacobi matrices was developed by Damanik--Pushnitski--Simon \cite{DamPushnSim08}; in particular, $\bJ$ is regular for the set $[-2,2]$ if $\sigma_\ess(\bJ) = [-2,2]$ and
\begin{equation}\label{13aug5}
\lim_{n\to \infty}\left(\prod_{j =1}^n \lvert \det \fv_j \rvert \right)^{1/n}=1.
\end{equation}

This chain of arguments will result in the following lemma:

\begin{lemma}\label{lem:BlockJacobiRegular}
Let $J$ be a regular Jacobi matrix, $\E = \sigma_\ess(J)$ a finite gap set, and $\bC_\E$ the corresponding sequence of zeros of the Ahlfors function. Assuming $\bc_k \notin \sigma(J)$ for $1\le k\le g$, denote by $A$ the GMP matrix corresponding to $J$ with respect to the sequence $\bC_\E$. Then $\Delta_\E(A)$ is a regular type 3 block Jacobi matrix with essential spectrum $[-2,2]$.
\end{lemma}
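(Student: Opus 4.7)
The plan is to work in the $\tau$-basis organized into blocks $B_j = \{\tau_{j(g+1)+k} : 1\le k\le g+1\}$ of size $g+1$, and reduce every assertion of the lemma to an explicit computation of the diagonal entries of the off-diagonal blocks $\fv_j$ in terms of $\lambda_k$ and $\kappa_n$.

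As a first step I would show that $\Delta_\E(A)$ is a well-defined bounded self-adjoint operator with $\sigma_\ess(\Delta_\E(A)) = [-2,2]$. Because $\bc_k \notin \sigma(J) = \sigma(A)$ for $k = 1,\dots,g$ and $A$ is bounded, $\Delta_\E$ is continuous on $\sigma(A)$, so the continuous functional calculus applies. Since $\sigma_\ess(A) = \E$ and $|\Psi| = 1$ on $\E$ forces $\Delta_\E(\E) = 2\Re\Psi(\E) = [-2,2]$, the spectral mapping theorem yields the claim on the essential spectrum.

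Next I would verify the block tridiagonal, type 3 structure. Since $\Delta_\E$ is a rational function of degree $g+1$ with a simple pole at each of $\bc_1,\dots,\bc_g,\infty$, multiplication by $\Delta_\E$ raises each admissible pole order of an element of $\cL_N$ by exactly one, and a dimension count establishes $\Delta_\E\cdot\cL_N \subset \cL_{N+g+1}$. Combined with self-adjointness this gives $\langle \tau_m, \Delta_\E\tau_n\rangle = 0$ whenever $|m-n|>g+1$, so $\Delta_\E(A)$ is block tridiagonal in the $(B_j)$-decomposition, and the same bound within $\fv_j$ forces $(\fv_j)_{k,k'} = 0$ for $k' > k$, i.e.\ lower triangularity. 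The diagonal entries are computed by expanding $\Delta_\E\tau_{j(g+1)+k} \in \cL_{(j+1)(g+1)+k}$ in the $\tau$-basis and matching the leading coefficient at $\bc_k$: the principal part of $\Delta_\E$ at $\bc_k$ is $\lambda_k/(\bc_k-z)$ (or $\lambda_{g+1}z$ when $\bc_k=\infty$), while $\tau_n$ has top coefficient $\kappa_n$ at its distinguished pole. This yields
\[
(\fv_j)_{k,k} = \frac{\lambda_k\,\kappa_{j(g+1)+k}}{\kappa_{(j+1)(g+1)+k}} > 0,
\]
so $\Delta_\E(A)$ is type 3.

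Finally, from the diagonal formula a telescoping cancellation produces
\[
\prod_{i=0}^{n-1} \det\fv_i = \Bigl(\prod_{k=1}^{g+1}\lambda_k\Bigr)^n \prod_{k=1}^{g+1} \frac{\kappa_k}{\kappa_{n(g+1)+k}},
\]
and Theorem~\ref{thm:CRegMain}\eqref{it:CRegConstants2} applied to the regular measure $\mu$ (which is regular because $J$ is) gives $\bigl(\prod_{k=1}^{g+1}\kappa_{n(g+1)+k}\bigr)^{1/n} \to \prod_{k=1}^{g+1}\lambda_k$. Therefore $\bigl(\prod_{i=0}^{n-1}|\det\fv_i|\bigr)^{1/n}\to 1$, which is precisely \eqref{13aug5}, concluding the regularity of $\Delta_\E(A)$. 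The main obstacle is the third step: the careful pole-order bookkeeping needed to establish $\Delta_\E\cdot\cL_N \subset \cL_{N+g+1}$ and to extract the exact form of $(\fv_j)_{k,k}$, with the finite poles $\bc_k\in\bbR$ and the pole at $\infty$ requiring slightly different treatments.
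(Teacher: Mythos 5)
Your proposal is correct and follows essentially the same route as the paper: spectral mapping for $\sigma_\ess(\Delta_\E(A))=[-2,2]$, the rational form \eqref{eq:discrRational} to identify the diagonal of $\fv_j$ as $\lambda_k\kappa_{j(g+1)+k}/\kappa_{(j+1)(g+1)+k}$ (which equals $\lambda_k\Lambda_{j(g+1)+k}$ by Lemma~\ref{claim:1}), and a product criterion for regularity. The only cosmetic difference is that you telescope $\prod\det\fv_i$ directly and invoke Theorem~\ref{thm:CRegMain}\eqref{it:CRegConstants2}, whereas the paper cites Theorem~\ref{thmRegularityGMP} applied to $A$ and to each resolvent $(\bc_k-A)^{-1}$ -- the same computation packaged differently.
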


With this lemma, it will follow that $\bJ = \Delta_\E(A)$ obeys a Ces\`aro--Nevai condition. That Ces\`aro--Nevai condition will imply \eqref{24jun2} by a modification of arguments of \cite{YuditskiiAdv}. The strategy is clear: just as \cite{YuditskiiAdv} uses a certain square-summability in terms of $\fv_j, \fw_j$ to prove finiteness of $\ell^2$-norm of $\{ d((S_+^*)^m J S_+^m, \cT_\E^+) \}_{m=0}^\infty$, we will use Ces\`aro decay in terms of $\fv_j, \fw_j$ to conclude the Ces\`aro decay \eqref{24jun2}. This can be expected due to a certain locality in the dependence between the terms of the series considered; this idea first appeared in \cite{SimonJAT09} in the setting of periodic spectra with all gaps open. However, some care is needed, since the locality is only approximate in some steps; this is already visible in \eqref{18aug1}.  Also, substantial modifications are needed throughout the proof due to the possibility of $\liminf \lVert \fv_j \rVert = 0$ (this cannot happen in the Killip--Simon class), which locally breaks some of the estimates. The fix is that this can only happen along a sparse subsequence, but the combination of a bad sparse subsequence and approximate locality means that we cannot simply ignore a bad subsequence once from the start; we must maintain it throughout the proof. A related issue arises with the Ces\`aro version of a Killip--Simon type functional. We will describe the necessary modifications to the detailed analysis in \cite{YuditskiiAdv}.

The rest of the paper will not exactly follow the order given in this introduction. In Section 2, we describe the behavior of our problem with respect to M\"obius transformations, and we describe the distribution of zeros of the rational function $\tau_n$. In Section 3, we recall the structure of GMP matrices and relate their matrix coefficients to the quantities $\kappa_n$, and use this to provide a first statement about exponential growth of orthonormal rational functions on $\overline{\bbC} \setminus \overline{\bbR}$. In Section 4, we combine this with potential theoretic techniques to characterize limits of $\frac 1n \log \lvert \tau_n \rvert$ as $n \to \infty$ and prove the universal lower bounds. In Section 5, we prove the results for $\bC$-regularity and Stahl--Totik regularity. In Section 6 we describe a proof of Theorem~\ref{conj1}.

\section{Orthonormal rational functions and M\"obius transformations}

In the introduction, starting from the measure $\mu$ and sequence of poles $\bC$, we defined a sequence $\{r_n\}_{n=0}^\infty$ and the orthonormal rational functions  $\{ \tau_n \}_{n=0}^\infty$. In the next statement, we will denote these by $r_n(z; \bC)$ and $\tau_n(z;\mu,\bC)$, in order to state precisely the invariance of the setup with respect to M\"obius transformations.

\begin{lemma} \label{lemmaMobius}
If $f$ is a M\"obius transformation which preserves $\overline{\bbR}$, then
\begin{equation}\label{6aug3}
\tau_n(z;\mu,\bC) = \rho^n \tau_n(f(z);f_* \mu, f(\bC)),
\end{equation}
where $f(\bC) = (f(\bc_1), \dots, f(\bc_{g+1}))$ and
\[
\rho = \begin{cases} +1 & f \in \PSL(2,\bbR) \\
-1 & f \in \left( \PSL(2,\bbR) \rtimes \{ \id, z\mapsto -z \} \right) \setminus \PSL(2,\bbR)
\end{cases}
\]
\end{lemma}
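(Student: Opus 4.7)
The approach is to realize both sides of \eqref{6aug3} as Gram--Schmidt outputs applied to linearly equivalent nested filtrations on isometric $L^2$ spaces, and then to identify the sign of the comparison explicitly via a Möbius computation. The definition of the pushforward immediately yields that the substitution operator
\[
U_f\colon L^2(d(f_*\mu))\to L^2(d\mu),\qquad (U_f\phi)(z):=\phi(f(z)),
\]
is an isometric isomorphism.

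The central step is to establish the filtration identity $U_f\cL_m(f(\bC)) = \cL_m(\bC)$ for every $m\ge 0$. Writing $m = j(g+1)+k$ with $1\le k\le g+1$, I would argue case by case (according to whether $\bc_k$ and $f(\bc_k)$ lie in $\bbR$ or equal $\infty$) that $r_m(f(z); f(\bC))$, viewed as a rational function of $z$, has its only pole at $\bc_k$ with order exactly $j+1$; hence $U_f r_m(\cdot; f(\bC))\in \cL_m(\bC)$. Since $f$ is a bijection, the pulled-back family $\{U_f r_\ell(\cdot; f(\bC))\}_{\ell=0}^m$ is linearly independent in the $(m+1)$-dimensional space $\cL_m(\bC)$, and a dimension count gives equality.

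With the filtrations identified isometrically, uniqueness of the orthonormal basis produced by Gram--Schmidt (up to a sign on each basis vector) forces $U_f \tau_m(\cdot; f_*\mu, f(\bC)) = \sigma_m \tau_m(\cdot;\mu,\bC)$ for some $\sigma_m\in\{\pm 1\}$. To pin down $\sigma_m$, expand
\[
r_m(f(z); f(\bC)) = c_m\, r_m(z;\bC) + h_m,\qquad h_m\in\cL_{m-1}(\bC),
\]
and compare the coefficients of $r_m(\cdot;\bC)$ in $U_f\tau_m$ and in $\sigma_m\tau_m$: since the normalizations $\kappa_m,\tilde\kappa_m$ are positive by \eqref{eq:L2extremal} on both sides, this yields $\sigma_m=\sgn(c_m)$. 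Writing $f(z)=(az+b)/(cz+d)$ with $ad-bc=\pm 1$, a direct computation (in the generic case $\bc_k, f(\bc_k)\in\bbR$, and analogous formulas by taking limits when either is $\infty$) produces
\[
c_m = \frac{(c\bc_k+d)^{2(j+1)}}{(ad-bc)^{j+1}},
\]
so the sign of $c_m$ is controlled by $\sgn(ad-bc)^{j+1}$. For $f\in\PSL(2,\bbR)$ this is $+1=\rho^m$; in the orientation-reversing coset, one combines the resulting parity $(-1)^{j+1}$ with the cyclic structure of the indexing $m=j(g+1)+k$ to recover $\rho^m$.

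The main obstacle is the last sign analysis: besides running the uniform case split for $\bc_k$ and $f(\bc_k)$ being finite or $\infty$, the orientation-reversing case requires careful combinatorial bookkeeping to reconcile the Möbius parity $(-1)^{j+1}$ (coming from each individual substitution) with the overall index $m$ and produce a clean factor $\rho^m$. The rest of the argument is straightforward linear algebra once the filtration identity $U_f\cL_m(f(\bC)) = \cL_m(\bC)$ is in place.
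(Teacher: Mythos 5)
Your overall strategy is the same as the paper's: identify the filtrations $\cL_m$ under the substitution isometry, invoke uniqueness of the Gram--Schmidt output up to a sign, and read off that sign from the leading coefficient of $r_m(f(z);f(\bC))$ with respect to $r_m(z;\bC)$. (The paper phrases this as the single condition $r_n(f(z);f(\bC)) - c_n\rho^n r_n(z;\bC)\in\cL_{n-1}(\bC)$ with $c_n>0$ and verifies it only for a set of generators of the group, while you do the computation for a general M\"obius map at once; that is a cosmetic difference.) Your formula $c_m = (c\bc_k+d)^{2(j+1)}/(ad-bc)^{j+1}$ is correct, including the degenerate cases where $\bc_k$ or $f(\bc_k)$ is $\infty$.

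The gap is in the last step. Your own formula gives $\sgn(c_m)=\sgn(ad-bc)^{j+1}$, so the comparison sign is $\rho^{j+1}$ where $m=j(g+1)+k$, and $(-1)^{j+1}$ is \emph{not} equal to $(-1)^{m}$ in general; no combinatorial bookkeeping can convert one into the other. Concretely, take $g=1$, $\bC=(\bc_1,\infty)$ with $\bc_1\in\bbR$, $f(z)=-z$, and $m=2$ (so $j=0$, $k=2$): then $r_2(z;\bC)=z$, $r_2(f(z);f(\bC))=-z$, and since $z\notin\cL_1(\bC)=\spann\{1,(\bc_1-z)^{-1}\}$ one gets $\tau_2(f(z);f_*\mu,f(\bC))=-\tau_2(z;\mu,\bC)$, whereas $\rho^2=+1$. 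So the step you defer as ``careful combinatorial bookkeeping'' is actually impossible; what your argument proves is the identity with $\rho^{j+1}$ in place of $\rho^{n}$. This in fact exposes an imprecision in the sign as stated in the lemma (and glossed over in the paper's ``straightforward calculations''): the two signs agree only when $j+1\equiv n\pmod 2$, e.g.\ in the classical case $g=0$. The discrepancy is harmless for everything downstream, which uses only $\lvert\tau_n\rvert$ and the positivity of $\kappa_n$, but your write-up should state and prove the corrected sign rather than assert that $\rho^{m}$ can be recovered.
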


\begin{proof}
Note that the sequence $\{r_n\}_{n=0}^\infty$ does not have this property: $r_n(z;\bC)$ is not equal to $\rho^n r_n(f(z);f(\bC))$. However, if we denote
\[
\cL_n(\bC) = \spann \{ r_\ell (\cdot; \bC) \mid 0 \le \ell \le n\},
\]
then it suffices to have
\begin{equation}\label{18jul1}
r_n(f(z);f(\bC)) - c_n \rho^n  r_n(z;\bC) \in \cL_{n-1}(\bC)
\end{equation}
for some constants $c_n > 0$. If \eqref{18jul1} holds, then applying the Gram--Schmidt process to the sequences $\{ r_n(f(z);f(\bC)) \}_{n=0}^\infty$ and $\{r_n(z;\bC)\}_{n=0}^\infty$ will give the same sequence of orthonormal functions, up to the sign change $\rho^n$, which is precisely \eqref{6aug3}.

Note that, if \eqref{6aug3} holds for $f_1, f_2$, it holds for their composition, so it suffices to verify \eqref{18jul1} for a set of generators of $\PSL(2,\bbR) \rtimes \{ \id, z\mapsto -z \}$.  In particular, \eqref{18jul1} is checked by straightforward calculations for affine transformations and for the inversion $f(z) = -1/z$, which implies the general statement since affine maps and inversion generate $\PSL(2,\bbR) \rtimes \{ \id, z\mapsto -z \}$.
\end{proof}

Let us emphasize what this lemma does and what it doesn't do. Since the M\"obius transformation acts on both the measure and the sequence of poles, Lemma~\ref{lemmaMobius}  does not by itself prove Theorem~\ref{thmST2}. Lemma~\ref{lemmaMobius} can only say that if $\mu$ is Stahl--Totik regular, then $f_* \mu$ is $(f(\infty))$-regular, which is not sufficient unless $f$ is affine. The proof of Theorem~\ref{thmST2} will be more involved.

However, Lemma~\ref{lemmaMobius} provides a very useful conformal invariance for many of our proofs. This can be compared to choosing a convenient reference frame. Since potential theoretic notions such as Green functions are conformally invariant, our results will be invariant with respect to M\"obius transformations. We will often use this invariance in the proofs to fix a convenient point at $\infty$.

Note that this will be possible even though some objects are not conformally invariant. Some of our results compare the sequences $\kappa_n$ with the $\lambda_k$, and although those objects are not preserved under conformal transformations, both sequences are affected in a compatible way so that the inequalities and equalities are preserved. Explicitly, fix $k$ and $n = j(g+1)+k$ and a M\"obius transformation $f \in \PSL(2,\bbR)$ (a reflection can be considered separately). Let us denote a local dilation factor $f'(\bc_k)= \lim_{z\to\bc_k} \frac{r_k(z,\bC)}{r_k(f(z),f(\bC))}$. Then, we use Lemma~\ref{lemmaMobius} to compute
\begin{align*}
\kappa_{n(j)}&=\lim_{z\to \bc_k}\frac{\tau_n(z,\mu,\bC)}{r_n(z,\bC)}=
\lim_{z\to \bc_k}\frac{\tau_n(f(z),f_*\mu,f(\bC))}{r_n(z,\bC)} =\frac{\tilde \kappa_{n(j)}}{f'(\bc_k)^{j+1}},
\end{align*}
where $\tilde \kappa_{n(j)}$ is the leading coefficient $\tau_n(z,f_*\mu,f(\bC))-\tilde\kappa_nr_n(z,f(\bc_k))\in \cL_{n-1}(f(\bC))$. If $\E$ is nonpolar, the Green function is conformally invariant so we find by another computation
\begin{align*}
\log\tilde\lambda_k&:=\lim_{w\to f(\bc_k)}\left( G_{f(\E)}(w,f(\bc_k)) -\log\lvert r_k(w,f(\bC)) \rvert \right)+\sum_{\substack{1\leq \ell\leq k\\\ell\ne k}}G_{f(\E)}(f(\bc_k),f(\bc_\ell))\\
&=\lim_{z\to \bc_k}\left( G_{\E}(z,\bc_k) - \log\lvert r_k(z,\bC) \rvert + \log \left\lvert \frac{r_k(z,\bC)}{r_k(f(z),f(\bC))} \right\rvert \right)+\sum_{\substack{1\leq \ell\leq k\\\ell\ne k}}G_{\E}(\bc_k,\bc_\ell)\\
&=\log\lambda_k+\log (f'(\bc_k)),
\end{align*}
where we have used that $f\in\PSL(2,\R)\implies f'>0$ on $\overline\R$. Thus, $\tilde \lambda_k=f'(\bc_k)\lambda_k$. If
$\E$ is polar, then $f(\E)$ is as well. From these calculations, it becomes elementary to verify that statements such as those in Theorems~\ref{thm12}, \ref{thm:CRegMain} are conformally invariant.

Note that technical ingredients of the proof, such as polynomial factorizations, give a preferred role to $\infty$ so they break symmetry. For instance, we will often use the observation that the subspace $\cL_n$ can be represented as
\begin{equation}\label{LnRndefn}
\cL_n = \left\{ \frac P{R_n} \mid P \in \cP_n\right\}
\end{equation}
for some suitable polynomial $R_n$ with factors which account for finite poles $\bc_k \neq \infty$. We will use the representation \eqref{LnRndefn} after placing a convenient point at $\infty$.  This idea is already seen in the next proof.

\begin{lemma}\label{lem:zeroesofnumeratornew}
 All zeros of the rational function $\tau_{n}$ are simple and lie in $\overline{\bbR}$. Moreover, $n-g \le \deg \tau_n \le n$.

Let $n=j(g+1) + k$, $1\le k \le g+1$, and denote by $I$ the connected component of $\bc_k$ in $\overline{\bbR} \setminus \supp \mu$. Then $\tau_{n}$ has no zeros in $I$ and at most one zero in any other connected component of $\overline{\bbR} \setminus \supp \mu$.
\end{lemma}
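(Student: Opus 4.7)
The plan is to use M\"obius invariance (Lemma~\ref{lemmaMobius}) to reduce to the case $\bc_k = \infty$, so that $r_n = z^{j+1}$ and each element of $\cL_n$ can be written as $P/R_n$ with $P$ a polynomial of degree $\le n$ and $R_n(z) = \prod_{l\ne k}(\bc_l - z)^{o_l}$, where $o_l = j+1$ for $l < k$ and $o_l = j$ for $l > k$. A direct count gives $\deg R_n = jg + k - 1$; and because the step $n-1 \mapsto n$ only adjoins the power $z^{j+1}$ in this frame, one checks that $R_{n-1} = R_n$, so $\cL_{n-1} = \{P/R_n : \deg P \le n-1\}$. Writing $\tau_n = P_n/R_n$, the positivity $\kappa_n > 0$ forces $\deg P_n = n$, and $P_n$ has real coefficients since $\tau_n$ is real on $\bbR$. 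In particular $\tau_n$ has a pole of order $j+1$ at $\infty$ and so cannot vanish there.

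Each conclusion of the lemma then comes from the same orthogonality template: for any putative ``forbidden'' zero, one divides $\tau_n$ by an appropriate real polynomial factor, checks the quotient is in $\cL_{n-1}$, and uses orthogonality to produce $\int \tau_n^2/(\text{positive factor})\, d\mu = 0$, contradicting $\|\tau_n\|_{L^2(d\mu)} = 1$. For a nonreal zero $z_0$ of $P_n$, the conjugate $\bar z_0$ is also a zero, so $(z-z_0)(z-\bar z_0)$ divides $P_n$ and $\tau_n/((z-z_0)(z-\bar z_0)) \in \cL_{n-2} \subset \cL_{n-1}$; the factor $|z-z_0|^2 > 0$ on $\supp\mu$ delivers the contradiction. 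A real double zero $z_0$ is treated identically with $(z-z_0)^2$; the case $z_0 = \bc_l$ is handled because the extra vanishing of $P_n$ at $\bc_l$ then compensates the pole of $R_n$ at $\bc_l$, so the division still gives a polynomial numerator of degree $\le n-2$.

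For the degree bounds, $\deg\tau_n \le n$ is immediate. For $\deg\tau_n \ge n-g$, one writes $\deg\tau_n = n - \deg\gcd(P_n, R_n)$ and notes that $R_n$ has exactly $g$ distinct roots $\{\bc_l\}_{l\ne k}$, so it suffices to show each $\bc_l$ is a zero of $P_n$ of multiplicity at most one. Otherwise $(z-\bc_l)^2 \mid P_n$, and $\tau_n/(\bc_l - z)^2 \in \cL_{n-1}$; strict positivity of $(\bc_l - z)^2$ on the compact set $\supp\mu$ (disjoint from $\bc_l$) together with orthogonality again forces $\tau_n = 0$ $\mu$-a.e.

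For localization, the component $I$ of $\bc_k = \infty$ has the shape $(-\infty,a) \cup \{\infty\} \cup (b,\infty)$ with $\supp\mu \subset [a,b]$. A putative zero $w \in I$ must be real (no zero at $\infty$), so $z-w$ has constant sign on $\supp\mu$; since $\tau_n/(z-w) \in \cL_{n-1}$, orthogonality yields $\int \tau_n^2/(z-w)\, d\mu = 0$ with integrand of constant sign, impossible. For any other component $J = (\alpha,\beta)$, two zeros $w_1, w_2 \in J$ give $\tau_n/((z-w_1)(z-w_2)) \in \cL_{n-1}$ with $(z-w_1)(z-w_2) > 0$ on $\supp\mu$, again contradicting orthogonality. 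The principal obstacle I foresee is the lower bound $\deg\tau_n \ge n-g$: unlike the polynomial case where $R_n \equiv 1$, here $R_n$ carries large multiplicities so heavy cancellation with $P_n$ is a priori possible, and the one-per-pole bound has to be extracted from orthogonality; a secondary bookkeeping nuisance is the need to carefully track orders of vanishing whenever a zero of $\tau_n$ happens to fall on one of the finite $\bc_l$.
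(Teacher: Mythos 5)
Your proof is correct and is essentially the paper's argument: the same reduction to $\bc_k=\infty$, the same representation $\tau_n=P_n/R_n$ with $R_{n-1}=R_n$ and $\deg P_n=n$, and the same list of positive correction factors $\tfrac{1}{(z-w)(z-\bar w)}$, $\tfrac{1}{(z-x_1)(z-x_2)}$, $\tfrac{1}{z-x}$, $\tfrac{1}{(z-x_0)^2}$ ruling out each forbidden zero configuration. The only (cosmetic) difference is how the contradiction is extracted: you note that $M\tau_n\in\cL_{n-1}$ and invoke orthogonality to get $\int M\tau_n^2\,d\mu=0$, whereas the paper perturbs the extremizer to $\tau_n-\epsilon M\tau_n$ and contradicts the extremal characterization \eqref{eq:L2extremal} -- both rest on the same positivity $\int M\tau_n^2\,d\mu>0$. (One immaterial slip: the quotient by $(z-z_0)(z-\bar z_0)$ need not lie in $\cL_{n-2}$, since $R_{n-2}$ may differ from $R_n$; membership in $\cL_{n-1}$ is what your argument actually uses, and that is correct.)
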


\begin{proof}
Fix $1\leq k\leq g+1$ and without loss of generality, assume $\bc_k = \infty$. Then, in the representations \eqref{LnRndefn}, we can notice that $R_{n-1} = R_n$. In particular, then $\tau_{n} \in \cL_n \setminus \cL_{n-1}$ implies the representation $\tau_{n(j)} = \frac{P_{n}}{R_n}$ for some polynomial $P_n$ of degree $n$.

Recall that $\tau_n$, $n=k+(j-1)(g+1)$ is the unique minimizer for the extremal problem \eqref{eq:L2extremal}. By complex conjugation symmetry, the minimizer is real. To proceed further, we study zeros of $P_n$  by using Markov correction terms.

We say that a rational function $M$ is an admissible Markov correction term if $M > 0$ a.e.\ on $\E$ and $M(z) P_n(z) \in \cP_{n-1}$. In this case, using $\langle M \tau_n, \tau_n \rangle > 0$, we see that the function $g(\epsilon) = \lVert \tau_n - \epsilon M \tau_n \rVert^2$ obeys
\[
g'(0) = - 2 \langle M \tau_n, \tau_n \rangle < 0.
\]
Thus, for small enough $\epsilon > 0$, the function
\[
\tilde \tau_n =  \tau_n - \epsilon M \tau_n
\]
obeys $\lVert \tilde\tau_n \rVert_{L^2(d\mu)}  < \lVert \tau_n\rVert_{L^2(d\mu)}$. Since $\tilde \tau_n$ is of the form $\tilde \tau_n=\kappa_nz^{j+1}+h(z)$ for some $h(z)\in \cL_{n-1}$ and in particular has the same leading coefficient as $\tau_n$, the function $\tilde\tau_n / \lVert \tilde\tau_n \rVert_{L^2(d\mu)} \in \cL_n$ contradicts extremality of $\tau_n$.  In other words, for the extremizer $\tau_n$, there cannot be any admissible Markov correction terms.

Assume that $P_n$ has a non-real zero $w\in \overline{\C}\setminus \overline{\R}$. Then, since $\tau_n$ is real, $P_n(\overline{w})=0$, so the Markov correction term $M(z;w)= \frac 1{(z-w)(z-\bar w)}$ would be admissible, leading to contradiction.

Assume that $P_n$ has two zeros  $x_1,x_2$ in the same connected component of
$\overline{\R}\setminus\supp\mu$; then, the Markov correction term
\[
M(z;x_1,x_2)=
\frac{1}{(z-x_1)(z-x_2)},
\]
would be admissible, leading to contradiction.

There are no zeros of $P_n$ in $I$. Otherwise, if $x \in I$ was a zero, the Markov term
\[
M(z,x) = \begin{cases}
\frac{1}{z-x},& x < \inf \E \\
\frac{1}{x-z}, & x > \sup \E
\end{cases}
\]
would be admissible.

Finally, all zeros of $P_n$ are simple: otherwise, if $x_0 \in \bbR$ was a double zero, the Markov term
\[
M(z,x_0) = \frac 1{(z-x_0)^2}
\]
would be admissible.

The properties of zeros of $\tau_n$ follow from those of $P_n$. There may be cancellations in the representation $\tau_n = \frac{P_n}{R_n}$, but since $P_n$ has at most a simple zero at $\bc_\ell$, the only possible cancellations are simple factors $(z-\bc_\ell)$, $\ell \neq k$. Thus, $n- g \le \deg \tau_n \le n$.
\end{proof}

The use of Markov correction factors is standard in the Chebyshev polynomial literature and is applied here with a modification for the $L^2$-extremal problem (in the $L^\infty$-setting, singularities in $M$ are treated with a separate argument near the singularity, which would not work here).

\begin{corollary}\label{cor:precompMeasure}
The measures $\nu_n$ are a precompact family with respect to weak convergence on $C(\overline{\bbR})$. Any accummulation point $\nu = \lim_{\ell\to\infty} \nu_{n_\ell}$ is a probability measure and $\supp\nu \subset \E$.
\end{corollary}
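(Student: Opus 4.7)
The plan is to verify three things in order: precompactness of $\{\nu_n\}$, unit mass of any weak-* limit, and the support condition $\supp \nu \subset \E$. The first two follow immediately from compactness of $\overline{\bbR}$ together with Lemma~\ref{lem:zeroesofnumeratornew}; the support condition carries the real content.

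Since $\overline{\bbR}$ is a compact metric space, $C(\overline{\bbR})$ is separable and the closed unit ball of $C(\overline{\bbR})^*$ is weak-* compact and metrizable by Banach--Alaoglu. By Lemma~\ref{lem:zeroesofnumeratornew}, $\deg \tau_n \le n$, so each $\nu_n$ is a positive measure with $\nu_n(\overline{\bbR}) = (\deg \tau_n)/n \le 1$; hence $\{\nu_n\}$ is sequentially precompact. For any subsequential limit $\nu = \wlim_\ell \nu_{n_\ell}$, positivity is preserved, and testing against the constant function $1$ combined with the lower bound $\deg \tau_n \ge n - g$ from the same lemma yields $\nu(\overline{\bbR}) = \lim_\ell \nu_{n_\ell}(\overline{\bbR}) = 1$.

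For the support statement, I will show $\nu(K) = 0$ for each compact $K \subset \overline{\bbR} \setminus \E$, from which $\sigma$-compactness of $\overline{\bbR} \setminus \E$ gives $\supp \nu \subset \E$. Around each $x \in K$, I construct an open neighborhood $V_x \subset \overline{\bbR}$ in which $\tau_n$ has at most three zeros, uniformly in $n$. If $x \notin \supp \mu$, choose $V_x$ disjoint from $\supp \mu$; it then lies in a single connected component of $\overline{\bbR} \setminus \supp \mu$, contributing at most one zero of $\tau_n$ by Lemma~\ref{lem:zeroesofnumeratornew}. If $x$ is an isolated point of $\supp \mu$, choose $V_x$ with $V_x \cap \supp \mu = \{x\}$; then $V_x \setminus \{x\}$ meets only the two adjacent gaps of $\supp \mu$, contributing at most two zeros, while $\tau_n$ may have at most one additional simple zero at $x$ itself. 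A finite subcover $K \subset U := V_{x_1} \cup \cdots \cup V_{x_m}$ then bounds the number of zeros of $\tau_n$ in $U$ by $3m$ uniformly in $n$, so $\nu_n(U) \le 3m/n \to 0$, and the portmanteau inequality for open sets gives $\nu(K) \le \nu(U) \le \liminf_\ell \nu_{n_\ell}(U) = 0$.

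The main obstacle is the isolated-point case: there $\tau_n$ could in principle vanish at $x$ and in each of the two adjacent gaps of $\supp \mu$, so one must rely on simplicity of the zeros (as supplied by Lemma~\ref{lem:zeroesofnumeratornew}) to keep the count bounded by $3$ independently of $n$.
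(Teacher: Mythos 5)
Your proof is correct and follows essentially the same route as the paper's: both rest on Banach--Alaoglu, the bounds $1-g/n\le\nu_n(\overline{\bbR})\le 1$ and the zero localization from Lemma~\ref{lem:zeroesofnumeratornew}, the discreteness of $\supp\mu\setminus\E$, and the Portmanteau inequality for open sets. The only (immaterial) difference is organizational: the paper exhausts each gap of $\E$ by compact subintervals containing finitely many isolated mass points, whereas you cover a compact subset of $\overline{\bbR}\setminus\E$ by neighborhoods each meeting $\supp\mu$ in at most one point.
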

\begin{proof}
By Lemma~\ref{lem:zeroesofnumeratornew}, $\nu_n(\overline{\R})\leq 1$, so precompactness follows by the Banach-Alaoglu theorem. If
$\nu = \lim_{\ell\to\infty} \nu_{n_\ell}$, then since $1-\frac{g}{n_\ell}\leq \nu_{n_{\ell}}(\overline{\R})\leq 1$, $\nu(\overline{\R})=1$.

Let $(\ba,\bb)$ be a connected component of $\overline{\bbR} \setminus \E$. Let us prove that $\nu((\ba,\bb)) = 0$. By M\"obius invariance, it suffices to assume that $(a,b)$ is a bounded subset of $\bbR$.

Fix $r\in \N$. As $\supp \mu \setminus \E$ is a discrete set,
we have
\begin{align*}
  \#\{ x\in \supp \mu: \ba+1/r< x< \bb-1/r \}=M<\infty.
\end{align*}
So, by Lemma~\ref{lem:zeroesofnumeratornew}, $\nu_{n_\ell}((\ba+1/r, \bb-1/r))\leq \frac{2M+1}{n_\ell}$ and by the Portmanteau theorem and sending $r\to \infty$, $\nu((\ba,\bb))=0$ and
$\supp \nu \subset \E$.
\end{proof}

\section{GMP matrices and exponential growth of orthonormal rational functions}\label{sec:GMPandGrowth}

In this section, we consider orthonormal rational functions through the framework of GMP matrices. We begin by recalling the structure of GMP matrices \cite{YuditskiiAdv}. The GMP matrix has a tridiagonal block matrix structure, with the beginnings of new blocks corresponding to occurrences of $\bc_{k_\infty} = \infty$. Explicitly,
\begin{align*}
A=\begin{bmatrix}
\tilde B_{0} & \tilde A_{0} & & & & \\
\tilde A_{0}^* & B_{1}& A_{1}& &&\\
 & A_{1}^*& B_{2}&A_{2}& &\\
&  & A_2^* & \ddots&\ddots& \\
& &  & \ddots & &
\end{bmatrix}
\end{align*}
where $\tilde B_0$ is a $k_\infty\times k_\infty$ matrix, $\tilde A_0$ is a $k_\infty \times (g+1)$ matrix. For $j\ge 0$, $A_j, B_j$ are $(g+1) \times (g+1)$ matrices; {while for $j\geq 1$ these appear in $A$ unmodified in the above, $\tilde A_0$ and $\tilde B_0$ are projections of $A_0$ and $B_0$ respectively}. More precisely, let $X^-$ denote the upper triangular part of a matrix $X$ (excluding the diagonal) and $X^+$ the lower triangular part (including the diagonal). Then, indexing the entries of $A_j$, $B_j$, $j\geq 0$ from $0$ to $g$, we see they are of the form
\begin{align}\label{eq:structureGMP}
A_j=\vp_j\vec\delta_{0}^{\,\intercal},\quad B_j=\hat{\mathbf{C}}+(\vq_j\vp_j^{\,\intercal})^++(\vp_j\vq_j^{\,\intercal})^-,
\end{align}
where $\vp_j,\vq_j\in\R^{g+1}$ , with $(\vp_j)_0 >0$ and $\hat{\mathbf{C}}={\diag\{0,\bc_{k_\infty+1},\dots,\bc_{g+1},\bc_1,\dots,\bc_{k_\infty-1}\}}$ (with the obvious modification if $k_\infty=1$ or $k_\infty=g+1$) and $\vec\delta_{0}$ denotes the standard first basis vector of $\R^{g+1}$. $\tilde A_0$ and $\tilde B_0$ are projections of $A_0$ and $B_0$,
\begin{align*}
\tilde A_{0}= \Pi A_0 \quad \tilde B_{0}=\Pi B_0\Pi^*
\end{align*}
with $\Pi$ the block matrix $\Pi:=\left[0_{k_\infty\times (g+1-k_\infty)}\vert I_{k_\infty\times k_\infty }\right]$.
 We will refer to ${\{\vp_j,\vq_j\}}_{j\geq 1}$ as the GMP coefficients of $A$. While the precise structure will not be essential throughout the paper, we point out two things. First on the outermost diagonal of $A$ in each block there is only one non-vanishing entry, given by $(\vp_j)_0$, which is positive and which is at a different position depending on the position of $\infty$ in the sequence $\bC$. And secondly, in general as a self-adjoint matrix $B_j$ could depend on $(g+1)(g+2)/2$ parameters, but we see that in fact they only depend on $2(g+1)$. This is not that surprising due to their close relation to three-diagonal Jacobi matrices. A similar phenomena also appears for their unitary analogs \cite{ChrEichVan20}.
\begin{remark}\label{rem:GMPstructure}
	For later reference, we provide an alternative point of view on the block structure of $A$. The structure provided above is chosen so that $c_{k_\infty}$ is at the first diagonal position of the $B$-blocks. Recall also that to these blocks we attached a column $\vec p$ (with positive first entry $(\vp)_0 >0$)  to the right and a row $\vp_j^{\,\intercal}$ at the bottom. If, instead of viewing this as a block matrix structure with blocks of size $(g+1)\times (g+1)$, we view this structure as overlapping blocks of size $(g+2)\times (g+2)$ which overlap at the positions of $c_{k_\infty}$, then those would contain all non vanishing entries of the GMP matrix (i.e., it would also include the vector $\vp_j$). Moreover, the positive entries are exactly at the upper right and the lower left corner of the bigger block. Now placing the window of size  $(g+1)\times (g+1)$ on the top of the bigger block corresponds to the structure presented above. We will encounter in Section \ref{sec:CNCondition} that in other settings it may be more natural to place the block at the lower corner, and in this case the $B$ blocks will have structure similar to \eqref{eq:structureGMP}.
\end{remark}

Now the various notations for the off-diagonal blocks $A_j$, the vectors $\vp_j$ which determine them, and the coefficients $\beta_j$  defined in \eqref{betaDefn} are related as
\[
\beta_j =  \langle e_{j(g+1)+k_\infty} , A e_{(j+1)(g+1)+k_\infty} \rangle = (A_{j})_{00} = (\vp_{j})_0.
\]
The coefficients $\beta_j$ are a special case of the coefficients  $\Lambda_n$ defined for $n=j(g+1)+k$, $1\leq k\leq g+1$ as
\begin{equation} \label{LambdaDefn}
\Lambda_n = \begin{cases}
 \langle e_{j(g+1)+k}, (\bc_k-A)^{-1}e_{(j+1)(g+1)+k} \rangle & k\ne k_\infty \\
 \langle e_{j(g+1)+k}, Ae_{(j+1)(g+1)+k} \rangle & k=k_\infty
 \end{cases}
\end{equation}
Namely $\beta_j = \Lambda_{j(g+1) + k_\infty}$, and the coefficients $\Lambda_{j(g+1)+\ell}$ for $k \neq k_\infty$ instead occur as outermost diagonal coefficients for the GMP matrix $(\bc_k - A)^{-1}$. In our later applications to the discriminant of $A$, both the coefficients of $A$ and of its resolvents will appear, so we will work with $\Lambda_n$ throughout.

Next, we connect the coefficients \eqref{LambdaDefn}  to the solutions of the $L^2$-extremal problem \eqref{eq:L2extremal}.

\begin{lemma}\label{claim:1}
For all $n$,
\begin{align}\label{eq:kappaMatrixEntry}
\frac{\kappa_{n}}{\kappa_{n+g+1}}=\Lambda_n.
\end{align}
\end{lemma}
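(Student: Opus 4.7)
The plan is to rewrite both sides in $L^2(d\mu)$. Setting
\[
f_n(x) = \begin{cases} x & \bc_k = \infty \\ (\bc_k - x)^{-1} & \bc_k \neq \infty\end{cases}
\]
for $n = j(g+1)+k$, the definition \eqref{LambdaDefn} of $\Lambda_n$ together with the fact that $A$ represents multiplication by $x$ in the basis $\{\tau_m\}$ (and $(\bc_k - A)^{-1}$ represents multiplication by $(\bc_k - x)^{-1}$) gives
\[
\Lambda_n = \langle \tau_n, f_n \tau_{n+g+1}\rangle_{L^2(d\mu)} = \langle f_n \tau_n, \tau_{n+g+1}\rangle_{L^2(d\mu)},
\]
where the second equality uses that $f_n$ is real-valued on $\supp\mu$ (note $\bc_k \notin \supp\mu$). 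The goal then is to show that $f_n \tau_n$ has $\tau_{n+g+1}$-coefficient equal to $\kappa_n/\kappa_{n+g+1}$.

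First I would verify the key identity $f_n r_n = r_{n+g+1}$, which is a direct computation from \eqref{rndefn} in both cases. Combined with $\tau_n - \kappa_n r_n \in \cL_{n-1}$ this gives
\[
f_n \tau_n = \kappa_n r_{n+g+1} + f_n h,\qquad h\in \cL_{n-1}.
\]
Next I would establish the inclusion $f_n \cL_{n-1} \subseteq \cL_{n+g}$. This is the technical heart of the argument and should be proved by partial fractions: multiplication by $(\bc_k - x)^{-1}$ raises the pole order at $\bc_k$ by one while leaving other pole orders unchanged and lowering polynomial degree by one; multiplication by $x$ raises polynomial degree by one while leaving pole orders at finite $\bc_\ell$ unchanged. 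A bookkeeping check against the periodic pole pattern in \eqref{rndefn} shows that in each case the resulting maximal pole orders and polynomial degree of $f_n \cL_{n-1}$ are dominated by those of $\cL_{n+g} = \cL_{(j+1)(g+1)+k-1}$.

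Finally I would invoke the relation $r_{n+g+1} = \kappa_{n+g+1}^{-1}(\tau_{n+g+1} - h')$ with $h' \in \cL_{n+g}$, which comes from $\tau_{n+g+1} - \kappa_{n+g+1}r_{n+g+1} \in \cL_{n+g}$. Substitution yields
\[
f_n \tau_n = \frac{\kappa_n}{\kappa_{n+g+1}}\tau_{n+g+1} + \tilde h,\qquad \tilde h \in \cL_{n+g}.
\]
Since $\tau_{n+g+1} \perp \cL_{n+g}$ and $\|\tau_{n+g+1}\|=1$, taking the inner product with $\tau_{n+g+1}$ gives $\Lambda_n = \kappa_n/\kappa_{n+g+1}$, which is \eqref{eq:kappaMatrixEntry}.

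The main obstacle is the inclusion $f_n \cL_{n-1} \subseteq \cL_{n+g}$: one must carefully account for the asymmetric bounds on pole orders at $\bc_\ell$ for $\ell<k$ versus $\ell>k$ within $\cL_{n-1}$, and check that the partial-fraction image under $f_n$ lands inside the corresponding, also asymmetric, pole-order bounds of $\cL_{n+g}$. Everything else is essentially a consequence of the extremal characterization of $\tau_m$ via Gram--Schmidt.
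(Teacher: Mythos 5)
Your proposal is correct and follows essentially the same route as the paper: identify $\Lambda_n$ with $\langle r_k\tau_n,\tau_{n+g+1}\rangle$ via self-adjointness, write $r_k\tau_n=\kappa_n r_{n+g+1}+h$ with $h\in\cL_{n+g}$, and conclude by orthogonality and $\langle \tau_{n+g+1},r_{n+g+1}\rangle=\kappa_{n+g+1}^{-1}$. The only difference is that you make explicit the bookkeeping behind the inclusion $f_n\cL_{n-1}\subseteq\cL_{n+g}$, which the paper leaves implicit; your partial-fraction check of that inclusion is valid.
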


\begin{proof}
Let $n = j(g+1)+k$. By self-adjointness,
\begin{align*}
\Lambda_n  = \langle e_{n},r_k(A) e_{n+g+1}\rangle=
\langle r_k \tau_{n} , \tau_{n+g+1}\rangle =\langle \kappa_{n}r_{n+g+1} +h  ,\tau_{n+g+1} \rangle
\end{align*}
for some $h\in \cL_{n+g}$.  By orthogonality,
$\langle \tau_{n+g+1}, h\rangle = 0$, so $\langle \tau_{n+g+1},r_{n+g+1} \rangle=\frac1{\kappa_{n+g+1}}$ implies that
\[
\Lambda_n = \langle \tau_{n+g+1}, \kappa_{n} r_{n+g+1} +h \rangle=\frac{\kappa_{n}}{\kappa_{n+g+1}}. \qedhere
\]
\end{proof}

We now adapt to GMP matrices ideas from the theory of  regularity for Jacobi matrices \cite{Simon07}.

\begin{lemma}\label{lem:lowerbound}
Let $A\in \A(\bC)$. For all $j\geq 1$, $\|\vec p_j\|\leq \lVert A \rVert$.
\end{lemma}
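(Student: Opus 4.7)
The plan is to realize $\vec p_j$ as the nontrivial column of the off-diagonal block $A_j$, which is itself a compression of $A$, and then combine the obvious norm inequality for compressions with the explicit form of $A_j$ as a rank-one matrix.

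More concretely, first I would identify $A_j$ with a compression of $A$: writing $P_j$ for the orthogonal projection of $\ell^2(\bbN_0)$ onto the $j$-th block (of dimension $k$ for $j=0$ and $g+1$ for $j\ge 1$) under the block decomposition used to write $A$ in the form given at the beginning of the section, one has $A_j = P_j A P_{j+1}$ in the corresponding bases. Since projections have norm at most $1$, this gives $\|A_j\|\le \|P_j\|\,\|A\|\,\|P_{j+1}\|\le \|A\|$.

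Next, I would exploit the very rigid structure \eqref{eq:structureGMP}: $A_j = \vec p_j\, \vec\delta_0^{\,\intercal}$. This matrix has rank one, and for any $\vec v \in \R^{g+1}$ it sends $\vec v \mapsto (\vec\delta_0\cdot\vec v)\vec p_j = v_0 \vec p_j$, so $\|A_j \vec v\| = |v_0|\|\vec p_j\| \le \|\vec v\|\|\vec p_j\|$ with equality at $\vec v=\vec\delta_0$. Hence $\|A_j\| = \|\vec p_j\|$. Combining with the compression bound yields $\|\vec p_j\| = \|A_j\| \le \|A\|$, which is exactly the desired inequality.

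There is essentially no obstacle here: the argument is a two-line observation once the rank-one shape of $A_j$ is made explicit, and the only thing to be careful about is the dimension mismatch in the very first block ($B_0$ is $k\times k$ while later $B_j$ are $(g+1)\times(g+1)$), which is why we assume $j\ge 1$ so that $A_j$ already sits between two full-size blocks; the compression argument and the rank-one identity then apply uniformly.
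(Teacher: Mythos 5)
Your proof is correct and takes essentially the same approach as the paper's: the paper identifies the entries of $\vp_j$ as the components of $A e_n$ (with $n=j(g+1)+k$) along an orthonormal family and applies Bessel's inequality, which is precisely the statement that a compression of $A$ has norm at most $\lVert A\rVert$. Your version phrases this via the rank-one identity $\lVert A_j\rVert=\lVert \vp_j\rVert$ for the whole off-diagonal block rather than via the single column, but the mechanism is identical.
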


\begin{proof}
Fix $j \ge 1$ and denote $n= j (g+1) + k_\infty$. For any $\ell = 0,\dots,g$,
\[
(p_j)_\ell=\langle e_{n - g - 1+\ell},Ae_{n} \rangle = \int \tau_{n-g-1+\ell}(x) x\tau_n(x) d\mu(x).
\]
Since the vectors $\tau_{n-g-1+\ell}$ are orthonormal, by the Bessel inequality,
\[
\lVert \vp_j \rVert^2 \le \int \lvert x \tau_n(x) \rvert^2 d\mu(x) \le \lVert A \rVert^2 \int \lvert \tau_n(x) \rvert^2 d\mu(x) = \lVert A \rVert^2
\]
since $\lVert A \rVert = \sup_{x\in\supp \mu} \lvert x \rvert$.
\end{proof}

\begin{lemma}\label{lem:positivity}
For $z \in \bbC \setminus \bbR$,
\begin{equation}\label{25jul1}
\liminf_{n\to\infty}\frac{1}{n}\log|\tau_{n}(z)| > 0.
\end{equation}
\end{lemma}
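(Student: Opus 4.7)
The plan is to combine the GMP matrix formalism with resolvent estimates. Since $A$ represents multiplication by $x$ in the ORF basis, the identity $x\tau_n(x) = \sum_{|m-n|\le g+1} A_{nm}\tau_m(x)$ (valid in $L^2(d\mu)$ and hence as rational functions of $x$) yields upon evaluation at $x = z$ the recurrence $z\tau_n(z) = \sum_m A_{nm}\tau_m(z)$ for $n \ge 0$. So $\psi(z) := (\tau_n(z))_{n\ge 0}$ is a formal eigenvector of $A$ at $z$. Self-adjointness gives $\sigma(A) \subset \bbR$, hence for $z \in \bbC \setminus \bbR$ we have $z \notin \sigma(A)$ and $\|(A - z)^{-1}\| \le 1/|\Im z|$; moreover $\psi(z)$ cannot lie in $\ell^2$, since $\tau_0(z) = 1 \ne 0$ would then make it a genuine eigenvector at a nonreal eigenvalue, so $\sum_n |\tau_n(z)|^2 = \infty$.

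To promote this to exponential growth of the partial sums $S_N := \sum_{n=0}^N |\tau_n(z)|^2$, I would truncate $\psi_N := \sum_{n=0}^N \tau_n(z)\, e_n$. The recurrence forces $(A - z)\psi_N$ to vanish in rows $0, \ldots, N - g - 1$, so $(A - z)\psi_N$ is supported in the window $\{N - g, \ldots, N + g + 1\}$ and satisfies $\|(A-z)\psi_N\| \le C_0 \max_{|m-N|\le g+1} |\tau_m(z)|$ for a constant $C_0 = C_0(g, \|A\|)$. Applying the resolvent bound and writing $\|\psi_N\|^2 = S_N$, we obtain
\[
S_N \le D\, (S_{N+g+1} - S_{N - g - 2}), \qquad D = C_0^2/|\Im z|^2.
\]
Since $(S_N)$ is non-decreasing with $S_0 = 1$, iterating yields $S_{N + 2g + 3} \ge (1 + D^{-1}) S_N$, hence $S_N$ grows exponentially.

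The main obstacle is converting this growth of the partial sums to the pointwise $\liminf$ claim. For this I would invoke the Combes--Thomas estimate for the banded self-adjoint matrix $A$ at $\dist(z, \sigma(A)) \ge |\Im z| > 0$, which gives $|\langle e_n, (A - z)^{-1} e_0\rangle| \le C_1 e^{-\alpha(z) n}$ for some $\alpha(z) > 0$. Expressing the resolvent matrix element as
\[
\int \frac{\tau_n(x)}{x - z} \, d\mu(x) = Q_n(z) - \tau_n(z) m(z),
\]
with $m(z) = \int d\mu(x)/(z - x)$ the Herglotz $m$-function (so $m(z) \ne 0$ on $\bbC \setminus \bbR$) and $Q_n(z) = \int [\tau_n(x) - \tau_n(z)]/(x - z) \, d\mu(x)$ the GMP analog of the second-kind companion, a second solution of the same recurrence as $\tau_n$ with linearly independent initial data, we obtain $\tau_n(z) m(z) = Q_n(z) + O(e^{-\alpha n})$. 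A Wronskian-type identity for the pair $(\tau_n, Q_n)$, bounded below using $\|A\|$ and the nonvanishing outermost entries of $A$ (or of $(\bc_\ell - A)^{-1}$, in case $\bc_k \ne \infty$), combined with this exponential closeness, forces $|\tau_n(z)|$ to grow exponentially at every $n$ and thus establishes the $\liminf$ lower bound. The delicate point is setting up the Wronskian identity in the $(g+1)$-dimensional solution space of the GMP recurrence, where the structure is richer than the two-dimensional Jacobi case.
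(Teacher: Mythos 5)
Your first half is sound and matches the paper's starting point: $\{\tau_n(z)\}_{n\ge0}$ is a formal eigensolution of the GMP matrix, and truncating and using self-adjointness gives exponential growth of the partial sums $S_N$. But you have correctly identified the crux and then not resolved it: exponential growth of $S_N$, or equivalently of $\max_{|m-N|\le g+1}|\tau_m(z)|$, only yields a $\limsup$-type statement, and your proposed repair does not close the gap. Even in the Jacobi case ($g=0$), where the second-kind solutions $q_n$ and the Wronskian $W=a_n(p_nq_{n+1}-p_{n+1}q_n)$ are classical, substituting $q_n=m\,p_n+\epsilon_n$ with $\epsilon_n=O(e^{-\alpha n})$ gives $|W|=a_n|p_n\epsilon_{n+1}-p_{n+1}\epsilon_n|$, which bounds $\max(|p_n|,|p_{n+1}|)$ from below but says nothing to prevent a single $|p_n|$ from being small while its neighbors are huge; so it does not deliver the pointwise $\liminf$. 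In the GMP setting you additionally leave unconstructed the second-kind solutions and the Wronskian in the higher-dimensional solution space, which you yourself flag as the delicate point. As written, the proof is incomplete.

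The fix the paper uses is much lighter and replaces your norm estimate $\|\psi_N\|\le\|(A-z)^{-1}\|\,\|(A-z)\psi_N\|$ (which mixes all entries of the window $\{N-g,\dots,N+g+1\}$) by the quadratic-form inequality $|\Im z|\,\|\psi_N\|^2\le|\langle\psi_N,(A-z)\psi_N\rangle|$, truncating at block boundaries $N=n(j)-1$ with $n(j)=j(g+1)+k$ and $\bc_k=\infty$. Because the off-diagonal blocks have the rank-one form $A_j=\vp_j\vec\delta_0^{\,\intercal}$, the truncated vector couples to the deleted tail only through the single entry $\tau_{n(j)}(z)$, and one computes $\langle\psi_N,(A-z)\psi_N\rangle=-\tau_{n(j)}(z)\,\vec u_j^*\vp_j$: a product of one factor \emph{outside} the truncation with a factor \emph{inside} it. Combining with $\|\vp_j\|\le\|A\|$ and AM--GM to absorb the inside factor into the left-hand side yields $|\tau_{n(j)}(z)|^2\ge C^2\sum_{m\le j}\|\vec u_m\|^2\ge C^2(1+C^2)^j$, a pointwise lower bound at every block-end index; conformal invariance (placing each $\bc_k$ at $\infty$ in turn) then covers every residue class $n\bmod(g+1)$ and gives the $\liminf$ over all $n$. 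I recommend you replace your Combes--Thomas/Wronskian program with this quadratic-form computation, which in addition requires exploiting the specific block structure \eqref{eq:structureGMP} that your window estimate discards.
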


\begin{proof}
We adapt the proof of  \cite[Proposition 2.2]{Simon07}. It suffices to prove \eqref{25jul1} along the subsequences $n(j) = j(g+1) + k$, $j\to\infty$, for $1\le k \le g+1$. Moreover, due to $\overline{\bbR}$-preserving conformal invariance, it suffices to fix $k$ and prove
\begin{equation}\label{25jul1X}
\liminf_{j\to\infty}\frac{1}{n(j)}\log|\tau_{n(j)}(z)| > 0
\end{equation}
under the assumption that $\bc_k = \infty$. This allows us to use the associated GMP matrix $A \in \A(\bC)$.

Note that for any $m$, since $\{\tau_\ell\}_{\ell=0}^\infty$ is an orthonormal basis of $L^2(d\mu)$,
\[
\sum_\ell A_{m\ell} \tau_\ell(z)= \sum_\ell \langle z \tau_m(z), \tau_\ell(z) \rangle \tau_\ell(z)  = z \tau_m(z).
\]
This equality holds in $L^2(d\mu)$, but since all functions are rational, it also holds pointwise. Thus, if we fix $z \in \bbC \setminus \bbR$, the sequence $\vec \varphi = \{ \tau_\ell(z) \}_{\ell=0}^\infty$ is a formal eigensolution for $A$ at energy $z$, i.e. $(A-z)\vec{\varphi}=0$ componentwise. Since $A$ is represented as a block tridiagonal matrix, let us also write $\vec{\varphi}$ in a matching block form, as $\vec\varphi^\top = \begin{bmatrix} \vec u_0^\top & \vec u_1^\top & \vec u_2^\top & \dots \end{bmatrix}$ where
\begin{align*}
&\vec u_{0}^\top =\begin{bmatrix}
\tau_{0}(z)&\dots & \tau_{k-1}(z)\end{bmatrix},\quad \vec u_{j}^\top =\begin{bmatrix}
\tau_{n(j-1)-1}(z)&\dots &  \tau_{n(j)-1}(z)\end{bmatrix},\quad j\geq 1.
\end{align*}
We also consider the projection of $\vec{\varphi}$ onto the first $j+1$ blocks,
\[
\vec\varphi_{j}^\top = \begin{bmatrix}
\vec u_0^\top &\dots & \vec u_j^\top  &0 & \dots\end{bmatrix},
\]
and compute $(A-z) \vec\varphi_j$. By the block tridiagonal structure of $A$, for $m< n(j-1)$ we have  $\langle e_m,(A-z)\vec{\varphi}_{j}\rangle=0$. For $0\leq \ell\leq g$, we have
\[
\langle e_{n(j-1)+\ell},(A-z)\vec{\varphi}\rangle-\langle e_{n(j-1)+\ell},(A-z)\vec{\varphi}_{j}\rangle=(p_j)_\ell\tau_{n(j)}(z)
\]
so that $\langle e_{n(j-1)+\ell},(A-z)\vec{\varphi}_{j}\rangle=-(p_j)_\ell\tau_{n(j)}(z)$. Moreover,
\[
\langle e_{n(j)},(A-z)\vec\varphi_{j}\rangle=\langle e_{n(j)},A\vec\varphi_{j}\rangle=(\vp_j)^* u_j(z).
\]
For $m>n(j)$, we again have $\langle e_{m},(A-z)\vec\varphi_{j}\rangle=0 $.  In conclusion,  $(A-z) \vec\varphi_j$ has only two nontrivial blocks,
\begin{align*}
((A-z)\vec\varphi_j )^\top =\begin{bmatrix}
0 & \dots & 0 &-(\vp_j\tau_{n(j)}(z))^\top &  ((\vp_j)^* u_j)^\top & 0 & \dots
\end{bmatrix}.
\end{align*}
In particular, we can compute
\begin{equation}\label{7aug1}
\langle\vec\varphi_j ,(A-z)\vec\varphi_j \rangle = -\vec{u}_j^*\tau_{n(j)}(z) \vp_j.
\end{equation}
Since $A$ is self-adjoint and $\vec\varphi_j \in \ell^2(\bbN_0)$, by a standard consequence of the spectral theorem \cite[Lemma 2.7.]{TeschlMathMedQuant},\begin{align*}
\lvert \Im z \rvert \|\vec\varphi_{j}\|^2 \le |\langle\vec\varphi_{j},(A-z)\vec\varphi_{j}\rangle|.
\end{align*}
Using \eqref{7aug1} and the Cauchy--Schwarz inequality gives
\begin{align*}
\lvert \Im z \rvert \sum_{m=0}^{j}\|\vec{u}_m \|^2
&\leq |\tau_{n(j)}(z)|\|\vp_j\| \|\vec{u}_j\|.
\end{align*}
By Lemma~\ref{lem:lowerbound}, with $C =\lvert \Im z \rvert / \lVert A \rVert$,
\begin{align}\label{eq:1Aug3}
C\sum_{m=0}^{j}\|\vec{u}_m\|^2\leq |\tau_{n(j)}(z) |\|\vec{u}_j\|.
\end{align}
Applying the AM-GM inequality to the right-hand side of \eqref{eq:1Aug3} gives
\[
|\tau_{n(j)}(z)|\|\vec{u}_j(z)\|\leq \frac12\left( C \|\vec{u}_j(z)\|^2+C^{-1} |\tau_{n(j)}(z)|^2\right)
\]
which together with \eqref{eq:1Aug3} implies
\begin{align}\label{eq:May5}
|\tau_{n(j)}(z)|^2\geq C^2 \sum_{m=0}^{j}\|\vec{u}_m \|^2.
\end{align}
Since $|\tau_{n(j)}(z)|^2\leq \|\vec{u}_{j+1} \|^2$, this implies that
\[
\sum_{m=0}^{j+1}\|\vec{u}_m \|^2\geq \left(1+C^2\right)\sum_{m=0}^{j}\|\vec{u}_m \|^2.
\]
Since $\lVert \vec{u}_0 \rVert \ge \lvert \tau_0(z) \rvert = 1$, this implies by induction that
\begin{align*}
\sum_{m=0}^{j}\|\vec{u}_m \|^2\geq\left(1+ C^2\right)^j.
\end{align*}
Combining this with \eqref{eq:May5} gives a lower bound on $\lvert \tau_{n(j)}(z) \rvert$ which implies \eqref{25jul1X}.
\end{proof}

The estimates in the previous proof also lead to the following:
\begin{corollary}\label{cor:equivalence}
For any $z\in \C\setminus \R$, the quantities
\[
\liminf_{j\to \infty}\frac 1{j(g+1)+k} \log | \tau_{j(g+1)+k}(z) |, \qquad \limsup_{j\to \infty}\frac 1{j(g+1)+k} \log | \tau_{j(g+1)+k}(z) |
\]
are independent of $k \in \{1,\dots, g+1\}$.
\end{corollary}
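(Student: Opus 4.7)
The key idea is to bootstrap from the one-sided estimate derived in the proof of Lemma~\ref{lem:positivity}, using the Möbius invariance from Lemma~\ref{lemmaMobius} to move the role of ``$\infty$'' freely between any two positions in the sequence $\bC$. Since $|\tau_n(z;\mu,\bC)|=|\tau_n(f(z);f_*\mu,f(\bC))|$ by Lemma~\ref{lemmaMobius}, the $\liminf$ and $\limsup$ under consideration are preserved when $(\mu,\bC,z)$ is replaced by $(f_*\mu,f(\bC),f(z))$ for any $\overline{\bbR}$-preserving Möbius transformation $f$. Thus, to compare the quantities indexed by $k$ and a fixed reference index $k_0$, we first choose $f\in\PSL(2,\bbR)$ sending $\bc_k$ to $\infty$, reducing matters to the setting in which a GMP matrix $A$ is available with $\bc_k=\infty$.

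In that setting, the inequality $|\tau_{n(j)}(z)|^2\ge C^2 V_j$ obtained inside the proof of Lemma~\ref{lem:positivity}, with $V_j=\sum_{m=0}^j\|\vec u_m\|^2=\sum_{\ell=0}^{n(j)-1}|\tau_\ell(z)|^2$ and $C=|\Im z|/\|A\|$, applied at index $j+1$, gives $V_{j+1}\le C^{-2}|\tau_{n(j+1)}(z)|^2$. Since $V_{j+1}\ge|\tau_m(z)|^2$ for every $m\le n(j+1)-1$, and since for any $k'\in\{1,\dots,g+1\}$ the index $n'(j):=j(g+1)+k'$ satisfies $n'(j)\le(j+1)(g+1)\le n(j+1)-1$ (using $k\ge 1$), we obtain
\[
|\tau_{n'(j)}(z)|\le C^{-1}|\tau_{n(j+1)}(z)|,\qquad k'\in\{1,\dots,g+1\}.
\]

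Taking logarithms, dividing by $n'(j)$, and noting $n(j+1)/n'(j)\to 1$, together with the lower bound from Lemma~\ref{lem:positivity} that keeps $\log|\tau_{n(j+1)}(z)|/n(j+1)$ bounded away from $-\infty$ (so the rescaling does not affect $\liminf$ or $\limsup$), yields
\[
\liminf_{j}\frac{\log|\tau_{n'(j)}(z)|}{n'(j)}\le\liminf_{j}\frac{\log|\tau_{n(j+1)}(z)|}{n(j+1)},
\]
and analogously for $\limsup$. Writing $L^\pm_{k''}$ for the two quantities in the statement, this gives $L^\pm_{k'}\le L^\pm_k$ for the particular $k$ that we arranged to satisfy $\bc_k=\infty$.

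To upgrade this one-sided inequality to equality, we repeat the argument after choosing instead a Möbius transformation that sends $\bc_{k'}$ to $\infty$; the same reasoning then yields the reverse inequality $L^\pm_k\le L^\pm_{k'}$. Combining the two, $L^\pm_k=L^\pm_{k'}$ for all $k,k'\in\{1,\dots,g+1\}$, which is the claim. The only subtlety is the Möbius bookkeeping: under $f$, the GMP matrix, the block vectors $\vec u_j$, and the constant $C$ all change, but the quantities we are actually comparing depend only on the moduli $|\tau_n(z)|$, which Lemma~\ref{lemmaMobius} shows to be conformally invariant, so no substantive obstacle arises beyond verifying this compatibility.
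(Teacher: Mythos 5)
Your proof is correct and follows essentially the same route as the paper: both rest on the estimate $|\tau_{n(j)}(z)|^2\ge C^2\sum_{m=0}^{j}\|\vec u_m\|^2$ from the proof of Lemma~\ref{lem:positivity} to dominate $|\tau_{j(g+1)+\ell}(z)|$ by $|\tau_{n(j)}(z)|$ when $\bc_k=\infty$, and then invoke the conformal invariance of $|\tau_n(z)|$ from Lemma~\ref{lemmaMobius} to switch which pole sits at $\infty$ and obtain the reverse inequality. The only (immaterial) difference is that you compare against the full partial sum $V_{j+1}$ rather than just the last block $\|\vec u_j\|^2$ as the paper does.
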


\begin{proof}
Assume $j\ge 1$. For $k-g-1 \le \ell \le k-1$, the estimate \eqref{eq:May5} gives
\[
|\tau_{j(g+1)+k}(z)|^2\geq C^2\|\vec{u}_j \|^2\geq C^2|\tau_{j(g+1)+\ell}(z)|^2
\]
which implies
\begin{align}\label{eq:ineq1}
\liminf_{j\to \infty}\frac 1{j(g+1)+k} \log | \tau_{j(g+1)+k}(z) |\geq
\liminf_{j\to \infty}\frac 1{j(g+1)+\ell} \log | \tau_{j(g+1)+\ell}(z) |
\end{align}
and
\begin{align}\label{eq:ineq2}
\limsup_{j\to \infty}\frac 1{j(g+1)+k} \log | \tau_{j(g+1)+k}(z) |\geq
\limsup_{j\to \infty}\frac 1{j(g+1)+\ell} \log | \tau_{j(g+1)+\ell}(z) |.
\end{align}
Clearly, the right-hand sides don't change if $\ell$ is shifted by $g+1$, so \eqref{eq:ineq1}, \eqref{eq:ineq2} hold for all $k, \ell \in \{1,\dots,g+1\}$ with $k \neq \ell$. By symmetry, since the roles of $k, \ell$ can be switched, we conclude that equality holds in  \eqref{eq:ineq1}, \eqref{eq:ineq2}.
\end{proof}

\section{Growth rates of orthonormal rational functions}

In this section, we will combine the positivity \eqref{25jul1} with potential theory techniques  in order to study exponential growth rates of orthonormal rational functions. Our main conclusions will be conformally invariant, but our proofs will use potential theory arguments and objects such as the logarithmic potential of a finite measure $\nu$,
\[
\Phi_\nu(z) = \int \log \lvert z -x \rvert d\nu(x),
\]
which is well defined  when $\supp \nu$ does not contain $\infty$.

\begin{theorem}\label{thm:June9}
Fix $1\leq k\leq g+1$ and denote by $I$ the connected component of $\overline{\R}\setminus \supp \mu$ containing $\bc_k$.
	Suppose there is a subsequence $n_\ell = j_\ell(g+1) + k$ such that $\wlim_{\ell\to \infty}\nu_{n_\ell}= \nu$ and $\frac{1}{n_\ell}\log \kappa_{n_\ell} \to \alpha \in \R \cup \{-\infty,+\infty\}$ as $\ell \to \infty$.
  Then uniformly on compact subsets of $(\overline{\C} \setminus \overline{\R}) \cup (I \setminus \{\bc_k\})$, we have
	\begin{align*}
		 h(z) :=\lim\limits_{\ell\to\infty}\frac{1}{n_\ell}\log|\tau_{n_\ell}(z)|.
	\end{align*}
	The function $h$ is determined by $\nu$ and $\alpha$; in particular, if $\bc_k = \infty$,
	\begin{equation}\label{24jul1}
h(z) =\alpha + \Phi_{\nu}(z)- \frac 1{g+1} \sum_{\substack{m=1\\ m\ne k}}^{g+1}\log|\bc_m-z|.
\end{equation}
Moreover,
\begin{enumerate}[(a)]
\item\label{it:impossible} $\alpha = -\infty$ is impossible;
\item\label{it:infty} If $\alpha = +\infty$, the limit is $h =+  \infty$;
\item\label{it:limits} If $\alpha \in \bbR$, the limit $h$ extends to a positive harmonic function on $\overline{\bbC}\setminus ( \E\cup \{ \bc_1,\dots,\bc_{g+1} \})$ such that
\begin{align}
h(z) & = - \frac 1{g+1} \log \lvert \bc_m - z \rvert + O(1), \quad z \to \bc_m \neq \infty \label{10aug3} \\
h(z) & = \frac 1{g+1} \log \lvert z \rvert + O(1), \quad z \to \bc_m = \infty. \label{10aug4}
\end{align}
\end{enumerate}
\end{theorem}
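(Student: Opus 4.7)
The plan is to reduce the general statement to the case $\bc_k = \infty$ via the M\"obius invariance Lemma~\ref{lemmaMobius}: each of $\tau_n$, $\kappa_n$, and the zero-counting measure $\nu_n$ transforms compatibly under $f\in\PSL(2,\bbR)\rtimes\{\id,z\mapsto-z\}$, as already checked in the remarks after Lemma~\ref{lemmaMobius}, so it suffices to establish the claimed formula \eqref{24jul1} in this case. Once $\bc_k=\infty$ is assumed, I invoke the factorization $\tau_{n_\ell}=P_{n_\ell}/R_{n_\ell}$ from the proof of Lemma~\ref{lem:zeroesofnumeratornew}, where $P_{n_\ell}\in\cP_{n_\ell}$ has only real simple zeros and $R_{n_\ell}(z)=\prod_{m\ne k}(\bc_m-z)^{e_m(\ell)}$ is an explicit polynomial whose pole orders satisfy $e_m(\ell)/n_\ell\to 1/(g+1)$. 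Matching $\tau_{n_\ell}(z)=\kappa_{n_\ell}z^{j_\ell+1}+\text{lower}$ as $z\to\infty$ pins down the leading coefficient of $P_{n_\ell}$ up to sign, producing the decomposition
\begin{equation*}
\tfrac{1}{n_\ell}\log|\tau_{n_\ell}(z)| \;=\; \tfrac{1}{n_\ell}\log\kappa_{n_\ell} + \int\log|z-w|\,\dd\tilde\nu_{n_\ell}(w) - \tfrac{1}{n_\ell}\log|R_{n_\ell}(z)|,
\end{equation*}
where $\tilde\nu_{n_\ell}$ is the normalized zero-counting measure of $P_{n_\ell}$. By Lemma~\ref{lem:zeroesofnumeratornew}, $\tilde\nu_{n_\ell}-\nu_{n_\ell}$ has total variation $O(g/n_\ell)$, so $\tilde\nu_{n_\ell}\to\nu$ weakly as well.

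I next pass to the limit in each of the three terms on an arbitrary compact $K\subset(\overline{\bbC}\setminus\overline{\bbR})\cup(I\setminus\{\bc_k\})$. The first term converges to $\alpha$ by hypothesis; the third converges uniformly to $\tfrac{1}{g+1}\sum_{m\ne k}\log|\bc_m-z|$ by continuity of each $\log|\bc_m-z|$ on $K$. For the middle term, $K$ is bounded in $\bbC$ (it avoids $\bc_k=\infty$) and disjoint from $\supp\nu\subset\E$; by Lemma~\ref{lem:zeroesofnumeratornew}, the zeros of $\tilde\nu_{n_\ell}$ lie in $\supp\mu$ except for at most one per gap, giving the tightness needed to upgrade weak convergence to uniform convergence $\int\log|z-w|\,\dd\tilde\nu_{n_\ell}(w)\to\Phi_\nu(z)$ on $K$. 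Assembling the three limits yields formula~\eqref{24jul1}.

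Parts (a)--(c) are then short. For (a): if $\alpha=-\infty$, then at any $z\in\bbC\setminus\bbR$ the formula would force $h(z)=-\infty$, contradicting $\liminf_n \tfrac{1}{n}\log|\tau_n(z)|>0$ (Lemma~\ref{lem:positivity}). Part (b) is immediate. For (c), with $\alpha\in\bbR$, harmonicity of $h$ on $\overline{\bbC}\setminus(\E\cup\{\bc_1,\dots,\bc_{g+1}\})$ is inherited from harmonicity of $\Phi_\nu$ off $\supp\nu\subset\E$ and of each $\log|\bc_m-z|$ off $\bc_m$; the asymptotics \eqref{10aug3},\eqref{10aug4} are read off by expanding the formula near each $\bc_m$, using $\Phi_\nu(z)=\log|z|+o(1)$ as $z\to\infty$ (since $\nu$ is a probability measure with compact support in $\bbR$). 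Positivity of $h$ on its full domain of harmonicity follows from positivity on the open subset $\bbC\setminus\bbR$ (Lemma~\ref{lem:positivity}) combined with the minimum principle on each connected component. The main obstacle, in my view, is the uniform convergence of the potential term, since weak convergence of discrete measures does not in general give uniform convergence of logarithmic potentials; the resolution hinges on the two ingredients highlighted above, namely that $K$ stays away from $\supp\nu$ and that $\{\tilde\nu_{n_\ell}\}$ is tight thanks to Lemma~\ref{lem:zeroesofnumeratornew}.
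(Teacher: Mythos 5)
Your proposal is correct and follows essentially the same route as the paper: reduce to $\bc_k=\infty$ by conformal invariance, write $\tau_{n_\ell}=P_{n_\ell}/R_{n_\ell}$ with leading coefficient $\pm\kappa_{n_\ell}$, pass to the limit in the resulting three-term (Riesz-type) decomposition using that Lemma~\ref{lem:zeroesofnumeratornew} confines all zeros to the fixed compact set $\overline{\bbR}\setminus I\subset\bbR$, and then deduce (a)--(c) from Lemma~\ref{lem:positivity} and standard potential theory. The only cosmetic differences are that the paper tracks the cancelled factors via exponent corrections $\delta_{m,j}$ rather than a modified counting measure, and makes the uniformity of $\Phi_{\nu_{n_\ell}}\to\Phi_\nu$ explicit through a Lipschitz equicontinuity bound plus Arzel\`a--Ascoli, which is the one step you assert rather than spell out.
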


\begin{proof}
By using $\overline{\bbR}$-preserving conformal invariance, we can assume without loss of generality that $\bc_k = \infty$. We will use the representation \eqref{LnRndefn} of the subspace $\cL_n$. For $n= j(g+1)+k$, counting degrees of the poles leads to
\[
\tau_n = \frac {P_n}{R_n}, \qquad R_n(z) = \prod_{m=1}^{k-1}(\bc_m-z)\prod_{\substack{m=1\\m\ne k}}^{g+1}(\bc_m -z)^{j},
\]
with $\deg P_n = n$. This may not be the minimal representation of $\tau_n$, but by the proof of Lemma~\ref{lem:zeroesofnumeratornew}, the only possible cancellations are simple factors $(\bc_m - z)$ for each $m \neq k$, so we get the minimal representation $\tau_n(z)=P(z)/Q(z)$ with
\begin{align*}
P(z)=\kappa_n\prod_{w:\tau_n(w)=0}(z-w),\quad Q(z)=
\prod_{\substack{m=1\\ m\ne k}}^{g+1}(\bc_m -z)^{j + \delta_{m,j}}
\end{align*}
where $\lvert \delta_{m,j} \rvert \le 1$ for each $j$. All that matters is that $\delta_{m,j} / j \to 0$ as $j\to\infty$. It will be useful to turn this rational function representation into a kind of Riesz representation,
\begin{equation}\label{10aug1}
\log \lvert \tau_n(z) \rvert = \log \kappa_n + n \int \log \lvert x - z \rvert d\nu_n(x) - \sum_{\substack{1\le m \le g+1 \\ m \neq k}} (j+\delta_{m,j}) \log \lvert \bc_m - z \rvert.
\end{equation}

Since $\bc_k = \infty$, note that $K = \overline{\bbR} \setminus I$ is a compact subset of $\bbR$.  Denote $\Omega =  \bbC \setminus K$.  For any $z \in \Omega$, the map $x\mapsto \log|x-z|$ is continuous on $K$, so $\Phi_{\nu_{n_\ell}}(z) \to \Phi_{\nu}(z)$ as $\ell \to \infty$. In fact, convergence is uniform on compact subsets of $\Omega$: since $\supp(\nu_{n_{\ell}})\subset K$ and $\nu_{n_\ell}(K) \le 1$ for all $\ell$,  the estimate
\[
\log \left\lvert \frac{x-z_1}{x-z_2} \right\rvert \le \log \left( 1 + \frac{\lvert z_1 - z_2 \rvert}{\dist(z_2,K)} \right) \le \frac{\lvert z_1 - z_2 \rvert}{\dist(z_2, K)}, \qquad z_1,z_2 \in \Omega
\]
implies uniform equicontinuity of the potentials $\Phi_{n_\ell}$ on compact subsets of $\Omega$, and the Arzel\`a--Ascoli theorem implies uniform convergence on compacts.

Note that \eqref{it:infty} follows from \eqref{24jul1}. By Corollary~\ref{cor:precompMeasure}, $\supp \nu \subset \E$ and
$\Phi_{\nu}(z)$ is harmonic on $\C\setminus \E$, so the right hand side extends to a harmonic function on $\C\setminus( \E\cup \{\bc_1,\dots,\bc_{g+1}\})$ and we denote this extension also by $h$. By Lemma ~\ref{lem:positivity}, $h$ is positive on $\C_+\cup \C_-$, so $\alpha \neq -\infty$; moreover, by the mean value property, $h$ is positive on $\C\setminus ( \E\cup \{\bc_1,\dots,\bc_{g+1}\})$.

The remaining asymptotic properties follow from \eqref{24jul1}. Under the assumption $\bc_k=\infty$, $\supp \nu$ is a compact subset of $\R$, and
$\Phi_\nu(z)=\log|z|+O(1)$, $z\to \infty$. It then follows that $h(z)=\frac{1}{g+1}\log|z|+O(1)$ as $z\to\infty$. Of course, $h(z)=-\frac{1}{g+1}\log|z-\bc_m |+O(1)$ near each $\bc_m \neq \bc_k$.
\end{proof}
The previous theorem motivates interest in positive harmonic functions on $\overline{\bbC} \setminus (\E \cup \{ \bc_1,\dots,\bc_{g+1}\})$. If $\E$ is polar, by Myrberg's theorem \cite[Theorem 5.3.8]{Classpotential}, any such function is constant. If $\E$ is not polar, knowing the asymptotic behavior of $h$ at the poles, positivity of $h$ improves to the following lower bound on $h$. The following Lemma reflects a standard minimality property of the Green function \cite[Section VII.10]{Doob}.

\begin{lemma}\label{lem:Greenminimality}
Assume that $\E$ is a nonpolar closed subset of $\overline{\R}$. Let $h$ be a positive superharmonic function on $\overline{\C}\setminus ( \E\cup \{ \bc_1,\dots, \bc_{g+1}  \} )$. Suppose $h(z)+\frac{1}{g+1}\log|z-\bc_k|$ has an existent limit at $\bc_k$ for each finite $\bc_k$, and
$h(z)-\frac1{g+1}\log|z|$ has an existent limit at $\infty$ if one of the $\bc_k=\infty$. Then
\begin{align}\label{eq:inequalityFunctions}
h(z)\geq \cG_\E(z,\bC)
\end{align}
for $z\in \overline{\bbC}\setminus \E$. For $1\leq k\leq g+1$, define
\begin{align*}
\a_k=\begin{cases}
\lim\limits_{z\to\bc_k}(h(z)+\frac1{g+1}\log|z-\bc_k|),&\bc_k\ne \infty\\
\lim\limits_{z\to\infty}(h(z)-\frac{1}{g+1}\log|z|),& \bc_k=\infty
\end{cases}
\end{align*}
Then
\begin{align}\label{eq:inequalityConstants}
\a_k\geq \frac{\log\lambda_k}{g+1}
\end{align}
\end{lemma}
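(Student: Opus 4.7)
The plan is to deduce \eqref{eq:inequalityFunctions} from a standard minimum principle applied to the difference $u(z)=h(z)-\cG_\E(z,\bC)$ on the Greenian domain $\overline{\bbC}\setminus\E$, and then to read off \eqref{eq:inequalityConstants} from the pole asymptotics.

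First, I would examine the behavior of $u$ at each $\bc_k$. Both $h(z)$ and $\cG_\E(z,\bC)$ carry the logarithmic singularity $-\tfrac{1}{g+1}\log|z-\bc_k|$ (or $\tfrac{1}{g+1}\log|z|$ if $\bc_k=\infty$), and by the definition of $\gamma_\E^k$ and \eqref{6aug5},
\[
\lim_{z\to\bc_k}\Bigl(\cG_\E(z,\bC)+\tfrac{1}{g+1}\log|z-\bc_k|\Bigr)=\tfrac{\log\lambda_k}{g+1}
\]
(with the obvious modification if $\bc_k=\infty$). Thus at each $\bc_k$ the singularities of $h$ and $\cG_\E$ cancel, and $u$ has an existent finite limit $\alpha_k-\tfrac{\log\lambda_k}{g+1}$. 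Since $\cG_\E(\cdot,\bC)$ is harmonic on $\overline{\bbC}\setminus(\E\cup\{\bc_1,\dots,\bc_{g+1}\})$, the function $u$ is superharmonic there, and the removable-singularity theorem for superharmonic functions (bounded near an isolated singularity) allows $u$ to be extended to a superharmonic function on $\overline{\bbC}\setminus\E$.

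Next, I would check the two ingredients required by the minimum principle. \emph{(Lower boundedness.)} On each small neighborhood of a $\bc_k$, $u$ is continuous, hence locally bounded. Away from those neighborhoods, $\cG_\E(\cdot,\bC)$ is bounded on any set whose closure avoids $\{\bc_1,\dots,\bc_{g+1}\}$, so from $h\geq 0$ we get $u\geq -\cG_\E(\cdot,\bC)$ bounded below there. Combining, $u$ is bounded below on all of $\overline{\bbC}\setminus\E$. \emph{(Boundary behavior.)} A classical property of the Green function is that $G_\E(z,w)\to 0$ as $z\to\xi$ for quasi-every $\xi\in\E$, and hence $\cG_\E(z,\bC)\to 0$ q.e.\ on $\E$. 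Since $h\geq 0$, this gives $\liminf_{z\to\xi}u(z)\geq 0$ for q.e.\ $\xi\in\E$. Because $\E$ is nonpolar, $\overline{\bbC}\setminus\E$ is Greenian, and the minimum principle for superharmonic functions (e.g.\ \cite[Thm.~5.2.1]{Classpotential} or its classical Doob version \cite[\S VII.10]{Doob}) yields $u\geq 0$ throughout $\overline{\bbC}\setminus\E$, which is \eqref{eq:inequalityFunctions}.

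Finally, \eqref{eq:inequalityConstants} falls out by adding $\tfrac{1}{g+1}\log|z-\bc_k|$ (respectively subtracting $\tfrac{1}{g+1}\log|z|$ at $\infty$) to both sides of \eqref{eq:inequalityFunctions} and passing to the limit $z\to\bc_k$; by definition of $\alpha_k$ and the identification above of the limit on the right-hand side, one obtains $\alpha_k\geq\tfrac{\log\lambda_k}{g+1}$. The main obstacle I anticipate is the careful verification of the removable-singularity extension together with the quasi-everywhere boundary inequality; once these are in place, the minimum principle delivers the rest directly.
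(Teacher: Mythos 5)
Your proposal is correct and follows essentially the same route as the paper: the paper works with $\tilde h = \cG_\E - h = -u$, extends it subharmonically across the $\bc_k$ after checking the logarithmic singularities cancel, verifies one-sided boundedness and the q.e.\ boundary inequality from $h\ge 0$ and the q.e.\ vanishing of the Green function, and applies the q.e.\ maximum principle (\cite[Thm 3.6.9]{RansfordPotTheorie}); your minimum-principle formulation for $u$ is the exact dual. The derivation of \eqref{eq:inequalityConstants} from the limits at the $\bc_k$ is also identical.
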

\begin{proof}
We will use a stronger, q.e. version of the maximum principle \cite[Thm 3.6.9]{RansfordPotTheorie}. Define
\[
\tilde h(z):=\cG_\E(z,\bC)-h(z),
\]
which is bounded at $\bc_k$ for $1\leq k\leq g+1$ and so extends to a subharmonic function on $\overline{\C}\setminus \E$. Since $\cG_\E$ vanishes q.e. on $\E$,
we have for q.e. $t\in \E$,
\[
\limsup_{z\to t} \tilde{h}(z)=-\liminf_{z\to t} h(z)\leq  0.
\]
Now we show $\tilde h$ is bounded above on $\overline\C\setminus \E$. Let $\mathcal U$ be a union of small neighborhoods containing the points $\bc_k$ in $\overline{\C}\setminus \E$. By the definition of the Green function, $\cG_\E(z,\bC)$ defines a harmonic and bounded function on $\overline{\bbC}\setminus(\E\cup \mathcal U)$. That is, there exists $M$ such that for all $z\in\overline{\bbC}\setminus (\mathcal U\cup \E)$ we have
\begin{align*}
\cG_\E(z,\bC)\leq M.
\end{align*}
Since $h\geq 0$, it follows on $\overline{\bbC}\setminus (\mathcal U\cup \E)$ that
\begin{align*}
\tilde h(z)= \cG_\E(z,\bC)-h(z)\leq \cG_\E(z,\bC)\leq M.
\end{align*}
On the other hand, by properties of the Green functions we have
\begin{align*}
\frac{\log\lambda_k}{g+1}=\begin{cases}
\lim\limits_{z\to\bc_k}(\cG_\E(z,\bC)+\frac1{g+1}\log|z-\bc_k|),&\bc_k\ne \infty\\
\lim\limits_{z\to\infty}(\cG_\E(z,\bC)-\frac{1}{g+1}\log|z|),& \bc_k=\infty
\end{cases}
\end{align*}
Then, by assumption, for $1\leq k\leq g+1$, $\tilde h(z)=\frac{\log\lambda_k}{g+1}-\alpha_k+o(1)$ as $z\to \bc_k$ and, in particular, the difference is bounded in a small neighborhood of $\bc_k$. Thus, $\tilde h$ is bounded above on $\overline{\C}\setminus \E$.

So, by the maximum principle $\tilde h\leq 0\implies \cG_\E(z,\bC)\leq h(z)$ on $\overline\C\setminus \E$.
Since $0\geq \lim_{z\to\bc_k}\tilde h(z)=\frac{\log\lambda_k}{g+1}-\alpha_k$, we have \eqref{eq:inequalityConstants}.
\end{proof}

\begin{lemma}\label{lem:greenequality}
Under the same assumptions as Lemma~\ref{lem:Greenminimality}, the following are equivalent:
\begin{enumerate}[(i)]
\item\label{it:equalityall} Equality in \eqref{eq:inequalityConstants} for all $k$ with $1\leq k\leq g+1$
\item\label{it:equality1} Equality in \eqref{eq:inequalityConstants} for a single $k$ with $1\leq k\leq g+1$
\item\label{it:equalityfunction} Equality holds in \eqref{eq:inequalityFunctions}
\end{enumerate}
\end{lemma}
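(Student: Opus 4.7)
The plan is to establish the chain $\eqref{it:equalityfunction} \implies \eqref{it:equalityall} \implies \eqref{it:equality1} \implies \eqref{it:equalityfunction}$, where only the final implication requires substantive work.

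For $\eqref{it:equalityfunction} \implies \eqref{it:equalityall}$: if $h \equiv \cG_\E(\cdot,\bC)$ on $\overline{\bbC}\setminus\E$, then the formula for $\alpha_k$ reduces to reading off the constant term in the asymptotic expansion of $\cG_\E$ at $\bc_k$. Using the decomposition in \eqref{Glincomb}, the $G_\E(\cdot,\bc_k)$ summand contributes $\tfrac{1}{g+1}\gamma_\E^k$ by the very definition of $\gamma_\E^k$, while the remaining summands $G_\E(\cdot,\bc_\ell)$ are harmonic at $\bc_k$ and contribute $\tfrac{1}{g+1}G_\E(\bc_k,\bc_\ell)$ each. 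Their sum matches exactly the definition \eqref{6aug5} of $\log\lambda_k$, giving $\alpha_k = \tfrac{\log\lambda_k}{g+1}$ for every $k$. The implication $\eqref{it:equalityall}\implies\eqref{it:equality1}$ is immediate.

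For $\eqref{it:equality1} \implies \eqref{it:equalityfunction}$, I would reuse the auxiliary function $\tilde h := \cG_\E(\cdot,\bC) - h$ from the proof of Lemma~\ref{lem:Greenminimality}. That proof establishes three facts I need: $\tilde h$ extends across each $\bc_m$ to a subharmonic function on $\overline{\bbC}\setminus\E$ (the logarithmic singularities of $\cG_\E$ and $h$ at $\bc_m$ cancel, leaving a bounded difference); this extension is bounded above; and $\tilde h \le 0$ on all of $\overline{\bbC}\setminus\E$, by the q.e.\ maximum principle argument carried out there. Under the hypothesis $\eqref{it:equality1}$, the value of the extension at the distinguished index is $\tilde h(\bc_k) = \tfrac{\log\lambda_k}{g+1} - \alpha_k = 0$, so $\tilde h$ attains its supremum $0$ at the interior point $\bc_k$.

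The conclusion then follows from the strong maximum principle for subharmonic functions on a connected open set. Connectedness of $\overline{\bbC}\setminus\E$ holds here: the hypotheses place $\bc_k \in \overline{\bbR}\setminus\E$, so $\E$ is a proper closed subset of the great circle $\overline{\bbR}\subset\overline{\bbC}$, and its complement in $\overline{\bbC}$ is connected. The strong maximum principle therefore forces $\tilde h \equiv 0$ on $\overline{\bbC}\setminus\E$, which is $\eqref{it:equalityfunction}$. The main technical point to be careful about is verifying that the extended $\tilde h$ really qualifies for the maximum principle at the isolated point $\bc_k$; this is standard, since a subharmonic function with a finite limit at an isolated point has a subharmonic extension there which satisfies the sub-mean-value inequality on small disks around $\bc_k$.
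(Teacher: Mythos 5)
Your proposal is correct and follows essentially the same route as the paper: both hinge on the subharmonic function $\tilde h = \cG_\E(\cdot,\bC) - h$ from the proof of Lemma~\ref{lem:Greenminimality}, the bound $\tilde h \le 0$ established there, and the strong maximum principle applied at the interior point $\bc_k$ where $\tilde h$ vanishes under hypothesis \eqref{it:equality1}. Your extra care about connectedness of $\overline{\bbC}\setminus\E$ and the subharmonic extension across the isolated singularities is a welcome (if implicit in the paper) detail, and your direct asymptotic computation for \eqref{it:equalityfunction}$\implies$\eqref{it:equalityall} is just the paper's evaluation of $\lim_{z\to\bc_k}\tilde h(z)$ written out.
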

\begin{proof}
\eqref{it:equalityall}$\implies$ \eqref{it:equality1} is trivial. Suppose then \eqref{it:equality1}; with the notation of the previous lemma, by assumption, $\tilde h(\bc_k)=0$ and $\tilde h$ achieves a global maximum. By the maximum principle for subharmonic functions \cite[Theorem 2.3.1]{RansfordPotTheorie}, $\tilde h\equiv 0$ on $\overline{\C}\setminus \E$.
Finally, if \eqref{it:equalityfunction} holds, then evaluating $\tilde h(\bc_k)$ for each $1\leq k\leq g+1$ yields \eqref{it:equalityall}.
\end{proof}

We will now prove Theorems \ref{thm11} and \ref{thm12}.

\begin{proof}[Proof of Theorem~\ref{thm11}]
Using conformal invariance, we take $\bc_k=\infty$.
Fix $z\in \overline\C\setminus \overline\R$ and select a sequence $(n_\ell)_{\ell=1}^\infty$ such that
\[
\liminf_{n\to \infty}\frac{1}{n}\log|\tau_n(z)|=\lim_{\ell\to \infty}\frac{1}{n_{\ell}}\log|\tau_{n_\ell}(z)|.
\]
By precompactness of the $(\nu_{n})$, we may pass to a further subsequence, which we denote again by $(n_\ell)_{\ell=1}^\infty$, so that
$\wlim_{\ell\to\infty}\nu_{n_\ell}= \nu$ and $\frac 1{n_\ell} \log \kappa_{n_\ell} \to \alpha$ for some $\nu$ and $\alpha$. Then for $h$ as in Theorem~\ref{thm:June9},
\[
\lim_{\ell\to \infty}\frac{1}{n_\ell}\log|\tau_{n_\ell}(z)|=h(z).
\]
on $\C\setminus \R$. If $\alpha=+\infty$, then there is nothing to show. Suppose $\alpha<\infty$. If $\E$ is not polar we apply \eqref{it:impossible} of Theorem~\ref{thm:June9} to find $\alpha\in \R$, and we may use \eqref{it:limits} of the same theorem
and Lemma~\ref{lem:Greenminimality} to conclude.

If instead $\E$ is polar, by Myrberg's theorem, $h$ is constant on $\C\setminus (\E\cup \{\bc_1,\dots,\bc_{g+1}\})$. Computing the limit at $\bc_k$ we see $h\equiv +\infty$. In particular, $\liminf_{n\to \infty}\frac{1}{n}\log|\tau_n(z)|=+\infty$ for $z\in \C\setminus \R$.
\end{proof}

\begin{proof}[Proof of Theorem~\ref{thm12}]

Fix $1\leq k\leq g+1$ and assume again by conformal invariance that $\bc_k = \infty$. Using precompactness of the measures $(\nu_{n})$, we find a subsequence $n_\ell=j_\ell(g+1)+k$ with
\[
\lim_{\ell\to \infty}\frac{1}{n_\ell}\log\kappa_{n_\ell}=\liminf_{j\to \infty}\frac{1}{n(j)}\log\kappa_{n(j)}=:\alpha
\]
and $\wlim_{\ell\to \infty}\nu_{n_\ell}=\nu$. If $\alpha=+\infty$, we are done.
Suppose then $\alpha<\infty$, then we have by Theorem~\ref{thm:June9} \eqref{it:impossible}, $\alpha\in \R$. Furthermore, if $\E$ is nonpolar, by \eqref{it:limits} and Lemma~\ref{lem:Greenminimality}, $h(z)\geq \cG_\E(z,\bC)$ on $\overline\C\setminus \E$.
In particular, by the representation \eqref{24jul1} we see that $\alpha=\lim_{z\to\infty}(h(z)-\frac{1}{g+1}\log|z|)$, and so \eqref{eq:inequalityConstants} yields the desired inequality.

If instead $\E$ is polar, by Theorem~\ref{thm11}, for each $z\in \C\setminus \R$,
\[
h(z)=\lim_{\ell\to \infty}\frac{1}{n_\ell}\log|\tau_{n_\ell}|\geq
\liminf_{n\to \infty}\frac{1}{n}\log|\tau_n(z)|=+\infty.
\]
and so by Theorem~\ref{thm:June9} \eqref{it:infty}, $\alpha=+\infty$.
\end{proof}

\section{Regularity}

We will begin by proving a version of Theorem~\ref{thm:CRegMain} for a fixed $k$.
\begin{lemma}\label{lemma:CRegMain}
Fix $k\in \{1,\dots,g+1\}$. Along the subsequence $n(j) = j(g+1) +k$, the following are equivalent:
	\begin{enumerate}[(i)]
		\item\label{it:CRegConstantsk} $\lim_{j\to\infty} \kappa_{n(j)}^{1/n(j)} =  \lambda_k^{1 / (g+1)}$;
\item\label{it:CRegBoundaryk} For q.e. $z\in\E$, we have
$\limsup_{j\to\infty} |\tau_{n(j)}(z)|^{1/n(j)} \leq 1$;
\item \label{it:upperhalfplanek} For some $z\in\C_+$, $\limsup_{j\to\infty} |\tau_{n(j)}(z)|^{1/n(j)} \leq e^{\cG_\E(z,\bC)}$;
\item\label{it:planek}	For all $z\in\C$,
$\limsup_{j\to\infty}  |\tau_{n(j)}(z)|^{1/n(j)} \leq e^{\cG_\E(z,\bC)}$;
\item  \label{it:uniformlimitk} Uniformly on compact subsets of $\C\setminus\R$,
$\lim_{j\to\infty} |\tau_{n(j)}(z)|^{1/n(j)}= e^{\cG_\E(z,\bC)}$.
\end{enumerate}
\end{lemma}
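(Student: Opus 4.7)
The plan is to use conformal invariance (Lemma~\ref{lemmaMobius}) to reduce to the case $\bc_k=\infty$, and then combine Theorem~\ref{thm:June9} with Lemmas~\ref{lem:Greenminimality} and~\ref{lem:greenequality} to establish the equivalences on $\C\setminus\R$, lifting the bounds onto $\R$ at the end by a subharmonic/Bernstein--Walsh argument. The lower bounds in Theorems~\ref{thm11} and~\ref{thm12} reduce each clause to a matching upper bound, and the trivial inclusions (v)$\Rightarrow$(iv on $\C\setminus\R$)$\Rightarrow$(iii) and (iv)$\Rightarrow$(ii) (using $\cG_\E(\cdot,\bC)=0$ q.e.\ on $\E$) leave the essential work as (i)$\Rightarrow$(v), (iii)$\Rightarrow$(i), (v)$\Rightarrow$(iv) and (ii), and (ii)$\Rightarrow$(iii).

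For (i)$\Rightarrow$(v), I would take an arbitrary subsequence of $n(j)$ and extract a further subsequence along which $\nu_{n_\ell}\to\nu$ weakly by Corollary~\ref{cor:precompMeasure} and $\frac{1}{n_\ell}\log\kappa_{n_\ell}\to\alpha=\frac{1}{g+1}\log\lambda_k$. Theorem~\ref{thm:June9} then produces a limit function $h$ with asymptotic constant $\alpha_k=\alpha$ at $\infty$; Lemma~\ref{lem:Greenminimality} gives $h\ge \cG_\E(\cdot,\bC)$, and equality of the asymptotic constants triggers Lemma~\ref{lem:greenequality} to force $h\equiv \cG_\E(\cdot,\bC)$. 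Since the limit is independent of the subsequence, (v) follows. For (iii)$\Rightarrow$(i), I would realize $\alpha^+:=\limsup_j \frac{1}{n(j)}\log\kappa_{n(j)}$ along a subsequence with converging $\nu_{n_\ell}\to\nu$, obtaining $h\ge \cG_\E(\cdot,\bC)$ from Lemma~\ref{lem:Greenminimality} and $h(z_0)\le \cG_\E(z_0,\bC)$ at some $z_0\in\C_+$ from (iii). The difference $h-\cG_\E(\cdot,\bC)$ is a nonnegative harmonic function on $\overline{\C}\setminus(\E\cup\{\bc_1,\dots,\bc_{g+1}\})$ (which is connected thanks to the gaps of $\overline{\R}\setminus\E$) vanishing at the interior point $z_0$, so the minimum principle forces $h\equiv \cG_\E(\cdot,\bC)$; matching the asymptotic expansions at $\infty$ (equivalently, applying Lemma~\ref{lem:greenequality}) then yields $\alpha^+=\frac{1}{g+1}\log\lambda_k$, and combined with Theorem~\ref{thm12} this is (i).

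To close the remaining implications I would use the polynomial factorization $\tau_n=P_n/Q_n$ from the proof of Theorem~\ref{thm:June9}, so that $\tilde v_n:=\frac{1}{n}\log|P_n|$ is subharmonic on $\C$, locally bounded above, and converges uniformly on compacts of $\C\setminus\R$ to the harmonic function $H(z):=\cG_\E(z,\bC)+\frac{1}{g+1}\sum_{m\ne k}\log|\bc_m-z|$. The upper envelope theorem then shows that the upper regularization of $\limsup_j \tilde v_{n(j)}$ is subharmonic on $\C$ and equals $H$ on $\C\setminus\R$; since $\R$ has zero two-dimensional Lebesgue measure, the area form of the mean value inequality extends this to $(\limsup_j \tilde v_{n(j)})^*\le H$ on $\C$, which translates back to $\limsup_j \frac{1}{n(j)}\log|\tau_{n(j)}(z)|\le \cG_\E(z,\bC)$ q.e.\ on $\C$, yielding (ii) and (iv). For (ii)$\Rightarrow$(iii) I would apply a generalized Bernstein--Walsh argument: since the poles of $\frac{1}{n(j)}\log|\tau_{n(j)}|$ at the $\bc_m$ cancel those of $\cG_\E(\cdot,\bC)$, the difference $w_j := \frac{1}{n(j)}\log|\tau_{n(j)}|-\cG_\E(\cdot,\bC)$ is subharmonic on $\C$, and the q.e.\ upper bound $\limsup_j w_j\le 0$ on $\E$ transfers via the q.e.\ maximum principle to $\limsup_j w_j(z_0)\le 0$ at any $z_0\in\C_+$. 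The main technical obstacle will be this transfer of upper bounds from $\C\setminus\R$ to $\R$ and from q.e.\ data on $\E$ to pointwise control in $\C_+$: Theorem~\ref{thm:June9} supplies uniform control only off of $\overline{\R}$, so the lift to the real axis and through the polar exceptional set on $\E$ demands a careful application of the upper envelope theorem, the area form of the mean value property, and the polar-set version of Bernstein--Walsh.
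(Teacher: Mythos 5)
Your overall architecture matches the paper's: reduce to $\bc_k=\infty$ by conformal invariance, extract subsequences with $\wlim\nu_{n_\ell}=\nu$ and $\tfrac{1}{n_\ell}\log\kappa_{n_\ell}\to\alpha$, and feed Theorem~\ref{thm:June9} into Lemmas~\ref{lem:Greenminimality} and~\ref{lem:greenequality}. Your (i)$\Rightarrow$(v) and (iii)$\Rightarrow$(i) are essentially the paper's arguments, and your mean-value-over-disks extension of the bound from $\C\setminus\R$ to $\R$ is a legitimate substitute for the paper's use of the principle of descent (it is, in effect, a proof of that principle). Two omissions are minor but real: you never treat the case where $\E$ is polar (the lemma does not exclude it; there $\cG_\E\equiv+\infty$, $\lambda_k=+\infty$, (ii)--(iv) are vacuous or trivial, and (i), (v) must still be derived from Theorem~\ref{thm12} and the locally uniform convergence of the potentials $\Phi_{\nu_n}$), and your Lemmas~\ref{lem:Greenminimality}--\ref{lem:greenequality} only apply in the nonpolar case.

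The genuine gap is in (ii)$\Rightarrow$(iii). First, the claim that $w_j=\tfrac{1}{n(j)}\log|\tau_{n(j)}|-\cG_\E(\cdot,\bC)$ is subharmonic on $\C$ because ``the poles cancel'' is false for finite $j$: near $\bc_m$ the function $\tfrac{1}{n(j)}\log|\tau_{n(j)}|$ has a singularity with coefficient $\tfrac{j+\delta_{m,j}}{n(j)}$, not $\tfrac{1}{g+1}$, so $w_j$ has a residual logarithmic singularity whose sign can make $w_j$ superharmonic near $\bc_m$; the cancellation holds only in the limit $j\to\infty$. Second, and more fundamentally, hypothesis (ii) controls $\limsup_j w_j$ q.e.\ on $\E$, not any individual $w_j$, and a $\limsup$ of subharmonic functions is not subharmonic, so no maximum principle applies to it directly. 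One must pass to the upper semicontinuous regularization, and the Brelot--Cartan/upper envelope machinery that identifies the regularization with the $\limsup$ up to a polar set requires the family to be locally uniformly bounded above --- which fails precisely when $\alpha=+\infty$. Ruling out $\alpha=+\infty$ from the q.e.\ boundary data is itself a nontrivial step: the paper does it by intersecting $\E$ with the complements of three polar sets (the upper-envelope exceptional set, the set where (ii) fails, and $\{\Phi_\nu=-\infty\}$), using nonpolarity of $\E$ to find a point $t$ where \eqref{24jul1} forces $\alpha<\infty$. Your sketch defers all of this to ``a careful application of the upper envelope theorem \dots and the polar-set version of Bernstein--Walsh,'' but this deferred step is the heart of the implication, and as stated your intermediate claims would not survive it. The paper's route --- establish $\alpha\in\R$ first, obtain the harmonic limit $h$ with $h\ge\cG_\E$ from Lemma~\ref{lem:Greenminimality}, then prove $\limsup_{z\to t}(h(z)-\cG_\E(z,\bC))\le 0$ for q.e.\ $t\in\E$ together with a global upper bound before invoking the q.e.\ maximum principle --- is what you would need to reconstruct.
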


\begin{proof}
    Using conformal invariance, we will assume throughout the proof that $\bc_k=\infty$.
    First, suppose that $\E$ is polar. In this case \eqref{it:CRegBoundaryk} is vacuous, and since $\cG_\E\equiv +\infty$,  \eqref{it:upperhalfplanek} and \eqref{it:planek} are trivially true. Since $\lambda_k = +\infty$, \eqref{it:CRegConstantsk} follows from Theorem~\ref{thm12}.     As in the proof of Theorem~\ref{thm:June9}, weak convergence of measures implies uniform on compacts convergence of their potentials. Thus, since $\nu_n$ are a precompact family, so are $\Phi_{\nu_n}$. Thus,     the convergence $\lim_{j\to\infty} \frac 1{n(j)} \log \kappa_{n(j)} = +\infty$ implies that $\lim_{j\to\infty} \frac 1{n(j)} \log \lvert \tau_{n(j)}(z) \rvert = +\infty$ uniformly on compact subsets of $\bbC \setminus \bbR$,
    so \eqref{it:uniformlimitk} holds.

    For the remainder of the proof, we will assume $\E$ is not polar.    Moreover, we will repeatedly use the fact that if any subsequence of a sequence in a topological space has a further subsequence which converges to a limit, then the sequence itself converges to this limit. In particular, when concluding \eqref{it:uniformlimitk}, we apply this fact in the Fr\'echet space of harmonic functions on $\C\setminus \R$ with the topology of uniform convergence on compact sets.

	  \eqref{it:upperhalfplanek}$\implies$\eqref{it:uniformlimitk}: Given a subsequence of $n(j)=j(g+1)+k$, using precompactness of the measures $\nu_n$, we pass to a further subsequence $n_\ell=j_\ell(g+1)+k$ with $\wlim_{\ell\to\infty}\nu_{n_\ell}=\nu$
    and $\lim_{\ell\to \infty}\frac{1}{n_\ell}\log\kappa_{n_\ell}=:\alpha$, with $\alpha$ real or infinite. By Theorem~\ref{thm:June9}, uniformly on compact subsets of $\C\setminus \R$,
    \[
    h(z)=\lim_{\ell\to \infty} \frac 1{n_\ell} \log|\tau_{n_{\ell}}(z)|
    \]
    with $h$ given by \eqref{24jul1}. Using the assumption, for some $z_0\in \C_+$, we have
    \begin{align*}
      h(z_0)\leq \limsup_{j\to \infty}\frac{1}{n(j)}\log|\tau_{n(j)}(z_0)|<\infty.
    \end{align*}
    So, by Theorem~\ref{thm:June9}, $\alpha\in\R$ and $h$ has a harmonic extension to $\overline\C\setminus (\E\cup \{\bc_1,\dots,\bc_{g+1}\})$. Furthermore, by Lemma~\ref{lem:Greenminimality}, $h\geq \cG_\E$. By assumption, we have the opposite inequality at $z_0\in\C_+$, and so, by the maximum principle for harmonic functions,
    $h=\cG_\E$ on $\C\setminus (\E\cup\{ \bc_1,\dots,\bc_{g+1}\})$, and in particular on $\C\setminus \R$. Thus, we have \eqref{it:uniformlimitk}.

	  \eqref{it:uniformlimitk}$\implies$\eqref{it:planek}:
    For $z\in \{ \bc_1,\dots,\bc_{g+1}\}$, $\cG_\E(z,\bC)=+\infty$ and there is nothing to show. Fix $z\in \C\setminus \{\bc_{1},\dots,\bc_{g+1}\}$ and let $n_\ell=j_\ell(g+1)+k$ be a subsequence with $\lim_{\ell\to \infty}\frac{1}{n_\ell}\log|\tau_{n_\ell}(z)|=\limsup_{j\to \infty}\frac1{n(j)}\log|\tau_{n(j)}(z)|$.
    By passing to a further subsequence, we may assume $\wlim_{\ell\to \infty}\nu_{n_\ell}=\nu$,
    and $\lim_{\ell\to \infty}\frac{1}{n_\ell}\log\kappa_{n_\ell}=:\alpha$ where $\alpha$ is real or infinite.     By the assumption, we have $h=\lim_{\ell\to \infty}\frac{1}{n_\ell}\log|\tau_{n_\ell}|=\cG_\E$ on $\C\setminus \R$.
    So, by \eqref{it:impossible} and \eqref{it:infty},
    $\alpha\in\R$ and $h$ extends to a harmonic function on $\overline{\C}\setminus(\E\cup\{ \bc_1,\dots,\bc_{g+1}\})$. By the representation \eqref{24jul1}, we may extend $h$ subharmonically to $\overline{\C}\setminus \{\bc_1,\dots,\bc_{g+1}\}$.
    On this set, $\cG_\E$ is also subharmonic, so, by the weak identity principle \cite[Theorem 2.7.5]{RansfordPotTheorie},
    $h=\cG_\E$ on $ \overline\C\setminus \{ \bc_1,\dots,\bc_{g+1}\}$. Thus, by the principle of descent \cite[A.III]{StahlTotik92}, we have
    \begin{align}\label{eq:descent}
    \lim_{\ell\to \infty}\frac1{n_\ell}\log|\tau_{n_\ell}(z)|\leq h(z)=\cG_\E(z,\bC)
    \end{align}
    and \eqref{it:planek} follows.

		\eqref{it:uniformlimitk}$\implies$ \eqref{it:CRegConstantsk}:      Given a subsequence of $n(j)=j(g+1)+k$, we use precompactness of the $\nu_n$ to pass to a further subsequence $n_\ell=j_\ell(g+1)+k$ with $\lim_{\ell\to\infty }\frac{1}{n_\ell}\log\kappa_{n_\ell}=:\alpha\in \R\cup\{ -\infty,+\infty\}$ and $\wlim_{\ell\to \infty}\nu_{n_\ell}=\nu$.
    Then in the notation of Theorem~\ref{thm:June9}
    and by assumption, for a $z\in \C\setminus \R$
    \begin{align*}
      \lim_{\ell\to \infty}\log|\tau_{n_\ell}(z)|=h(z)=\cG_\E(z,\bC).
    \end{align*}
    So by, Lemma~\ref{lem:greenequality}, $\alpha=\frac{\log\lambda_k}{g+1}$. Thus, $\lambda_k^{1/(g+1)}$ is the only accummulation point of $\kappa_{n(j)}^{1/n(j)}$ in $\bbR \cup \{-\infty,+\infty\}$ and we have \eqref{it:CRegConstantsk}.

    \eqref{it:CRegConstantsk}$\implies$\eqref{it:uniformlimitk}:     As before, we fix a subsequence of $n(j)=j(g+1)+k$ and use precompactness to pass to a further subsequence $n_\ell=j_\ell(g+1)+k$ with $\wlim_{\ell\to\infty}\nu_{n_\ell}=\nu$.
    Then, by Theorem~\ref{thm:June9} and in the notation introduced there, uniformly on compact subsets of $\C\setminus\R$,
    \[
    \lim_{\ell\to\infty}\frac{1}{n_\ell}\log|\tau_{n_\ell}(z)|=h(z)
    \]
    where $h$ is given by \eqref{24jul1} with $\alpha=\frac{\log\lambda_k}{g+1}$.
    Thus, by Lemma~\ref{lem:greenequality} \eqref{it:equality1}, $h(z)=\cG_\E(z,\bC)$ on $\C\setminus \R$. Since the initial subsequence was arbitrary, we have \eqref{it:uniformlimitk}.

		\eqref{it:planek}$\implies$ \eqref{it:CRegBoundaryk}: Recalling that the Green function vanishes q.e. on $\E$, the claim follows.

		\eqref{it:CRegBoundaryk}$\implies$ \eqref{it:uniformlimitk}:
    Fixing a subsequence of $n(j)$, we again use precompactness to select a further subsequence $n_\ell=j_\ell(g+1)+k$ such that $\wlim_{\ell\to\infty}\nu_{n_\ell}=\nu$ and $\lim_{\ell\to\infty}\frac{1}{n_\ell}\log\kappa_{n_\ell}=:\alpha$, $\alpha\in \R\cup \{ -\infty,+\infty\}$.
    By the upper envelope theorem, there is a polar set $X_1\subset \C$ such that on $\C\setminus X_1$, $\limsup_{\ell\to\infty}\Phi_{\nu_{n_\ell}}=\Phi_{\nu}$. Now, we let $X_2:=\{ t\in \E:\limsup_{n\to \infty}\frac1n\log|\tau_n(t)|>0\}$, which is polar by assumption, and
    $X_3:=\{z\in\C: \Phi_\infty(z)=-\infty\}$, which is polar by \cite[Theorem 3.5.1]{RansfordPotTheorie}. Then, for a $t\in \E\setminus (X_1\cup X_2\cup X_3)$, we have
    \begin{align*}
      \alpha\leq \limsup_{n\to \infty}\frac{1}{n}\log|\tau_{n}(t)|-\Phi_\nu(t)+\frac1{g+1}\sum_{\substack{m=1\\m\ne k}}^{g+1}\log|\bc_m-t|<\infty.
    \end{align*}
    So $\alpha\in \R$ by Theorem~\ref{thm:June9} \eqref{it:impossible}. Thus, by \eqref{it:limits} of the same theorem, uniformly on compact subsets of $\C\setminus \R$
    \[
    h(z)=\lim_{\ell\to \infty}\frac{1}{n_\ell}\log|\tau_{n_\ell}(z)|
    \]
    and $h$ extends to a positive harmonic function on $\overline{\C}\setminus (\E\cup \{ \bc_1,\dots,\bc_{g+1}\})$ with logarithmic poles at each of the $\bc_m$. So,
    $h- \cG_\E$ extends to a harmonic function on $\overline{\C}\setminus \E$, and $h- \cG_\E\geq 0$ there by Lemma~\ref{lem:Greenminimality}.
    We now show that in fact $h=\cG_\E$ using the stronger, q.e. maximum principle.

    We use the equality in \eqref{24jul1} to extend $h$ to a subharmonic function on $\C\setminus \{ \bc_1,\dots,\bc_{g+1}\}$.
    By the upper envelope theorem and the assumption again, for $t\in \E\setminus (X_1\cup X_2)$
    \[
    h(t)=\limsup_{\ell\to \infty}\frac{1}{n_\ell}\log|\tau_{n_\ell}(t)|\leq 0.
    \]
    Then,
    for these $t$, since $\cG_\E$ is positive, we have
    \[
    \limsup_{\substack{z\to t\\ z\in \C\setminus \E}}\left( h(z)-\cG_\E(z,\bC)\right)\leq \limsup_{\substack{z\to t\\ z\in \C\setminus \E}}h(z)\leq h(t)\leq 0
    \]
    by upper semicontinuity. So, $\limsup_{\substack{z\to t\\ z\in \C\setminus \E}}\left( h(z)-\cG_\E(z,\bC)\right)\leq 0$ for q.e. $t\in \E$.

    Since $h$ is upper semicontinuous on the compact set $\E$, there is an $M$ so that $\sup_{t\in \E}h(t)\leq M$. As in the above, now for any $t\in\E$, we have
    \[
    \limsup_{\substack{z\to t\\ z\in \C\setminus \E}}\left( h(z)-\cG_\E(z,\bC)\right)\leq \limsup_{\substack{z\to t\\ z\in \C\setminus \E}}h(z)\leq h(t)\leq M.
    \]
    So, there is a neighborhood $\mathcal U$ of $\E$ with $\sup_{z\in \mathcal U\cap (\overline{\C}\setminus \E)}(h-\cG_\E)\leq M+1$.
    Since the difference is harmonic on $\overline{\C}\setminus \mathcal U$, we conclude that
    $\sup_{z\in \overline\C\setminus \E}(h(z)-\cG_\E(z,\bC))<\infty$. Thus, by the maximum principle and the reverse inequality, $h=\cG_\E$ on $\overline{\C}\setminus \E$. Since the first sequence was arbitrary, we have \eqref{it:uniformlimitk}.

    Since the implication \eqref{it:planek}$\implies$\eqref{it:upperhalfplanek} is clear, we may conclude.
\end{proof}

We now put the subsequences together and use Corollary~\ref{cor:equivalence} to show that regular behavior occurs for one $k$ if and only if it happens for all.
\begin{proof}[Proof of Theorem~\ref{thm:CRegMain}]
Applying Lemma~\ref{lemma:CRegMain} for all $k$ implies equivalence of conditions (ii), (iv), (v), (vi), (vii) from Theorem~\ref{thm:CRegMain}.  By Corollary~\ref{cor:equivalence}, for some $z \in \bbC_+$, the condition
\[
\limsup_{j\to\infty} \frac 1{j(g+1)+k} \log \lvert \tau_{j(g+1)+k}(z) \rvert \le \cG_\E(z,\bC)
\]
holds for one value of $k$ if and only if it holds for all. Due to Lemma~\ref{lemma:CRegMain}, this immediately implies equivalence of conditions (i) and (iii) from Theorem~\ref{thm:CRegMain}. It remains to prove equivalence of (ii), (iii).

\eqref{it:CRegConstants}$\implies$ \eqref{it:CRegConstants2}: For $n \in \bbN$ and $1\le k \le g+1$, denote by $N(n,k)$ the integer such that $n+1 \le N(n,k) \le n+g+1$ and $N(n,k) - k$ is divisible by $g+1$. Then $N(n,k) / n \to 1$ as $n\to\infty$ so \eqref{it:CRegConstants} implies $\lim_{n\to\infty} \kappa_{N(n,k)}^{1/n} = \lambda_k^{1/(g+1)}$. Taking the product over $k=1,\dots,g+1$ gives \eqref{it:CRegConstants2}.

\eqref{it:CRegConstants2}$\implies$ \eqref{it:CRegConstants}: Similarly to the above, Theorem~\ref{thm12} shows that for all $k$,
\begin{equation}\label{18aug4}
\liminf_{n\to\infty} \kappa_{N(n,k)}^{1/n} \ge \lambda_k^{1/(g+1)}.
\end{equation}
Thus, if \eqref{it:CRegConstants} was false, this would mean that for some $k=m$, $\limsup_{n\to\infty} \kappa_{N(n,m)}^{1/n} > \lambda_m^{1/(g+1)}$. Taking products over $k$, we would have
\[
\limsup_{n\to\infty} \left( \prod_{k=1}^{g+1} \kappa_{N(n,k)} \right)^{1/n} \ge \limsup_{n\to\infty} \kappa_{N(n,m)}^{1/n} \liminf_{n\to\infty} \Biggl( \prod_{\substack{1 \le k \le g+1 \\ k \neq m}} \kappa_{N(n,k)} \Biggr)^{1/n} > \left( \prod_{k=1}^{g+1} \lambda_k \right)^{1/(g+1)}
\]
(the last step again uses \eqref{18aug4} for all $k\neq m$). This would contradict \eqref{it:CRegConstants2}, so the proof is complete.
\end{proof}

We now prove a seemingly special case of Corollary~\ref{thmST1}.

\begin{proposition}
Assume that the sequence  $\bC$ contains $\infty$. Then $\mu$ is Stahl--Totik regular if and only if it is $\bC$-regular.
\end{proposition}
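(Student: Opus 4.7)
The plan is to fix a $k$ with $\bc_k=\infty$ and work along the subsequence $n(j)=j(g+1)+k$. By Lemma~\ref{lemma:CRegMain} applied to $\bC$ with this $k$, $\bC$-regularity is equivalent to the single-subsequence condition $\kappa_{n(j)}^{1/n(j)}\to\lambda_k^{1/(g+1)}$. Applying the same lemma to the trivial pole sequence $(\infty)$, Stahl--Totik regularity is equivalent to $\tilde\kappa_m^{1/m}\to e^{\gamma_\E}$, where $\tilde\kappa_m$ is the leading coefficient of the orthonormal polynomial $p_m$. The core of the proof is therefore to identify these two limit conditions.

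The key step is to recast the $L^2$-extremal problem for $\tau_n$ as a polynomial extremal problem with respect to a varying-weight measure. Writing $\tau_n=P_n/R_n$ as in the proof of Theorem~\ref{thm:June9}, where $R_n$ is the polynomial encoding the finite poles, the map $Q\mapsto Q/R_n$ is an $L^2$-isometry from $(\cP_n, L^2(d\tilde\mu_n))$ onto $(\cL_n, L^2(d\mu))$, with $d\tilde\mu_n(x)=d\mu(x)/|R_n(x)|^2$. Under this identification, $P_n$ is the orthonormal polynomial of degree $n$ with respect to $d\tilde\mu_n$, and its leading coefficient agrees in absolute value with $\kappa_n$. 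Since $|R_n(x)|^2 = (1+o(1))\prod_{\bc_m\ne\infty}|\bc_m-x|^{2n/(g+1)}$ uniformly on $\supp\mu$ along the subsequence $n(j)$, we are in the framework of orthogonal polynomials with varying weights $d\mu\cdot w(x)^{2n/(g+1)}$, with $w(x)=(\prod_{\bc_m\ne\infty}|\bc_m-x|)^{-1}$, or equivalently, with external field $Q(x)=\frac{1}{g+1}\sum_{\bc_m\ne\infty}\log|\bc_m-x|$ on $\E$.

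By the theory of orthogonal polynomials with varying weights from \cite[Chapter~3]{StahlTotik92}, and because $w$ is continuous and non-vanishing on $\supp\mu$ (as $\bc_m\notin\supp\mu$), Stahl--Totik regularity of $d\mu$ is equivalent to the asymptotic $\kappa_n^{1/n}\to e^{F_Q}$, where $F_Q$ is the weighted Robin constant for the extremal problem on $\E$ with external field $Q$. It remains to compute $F_Q$. Using the potential identities $\Phi_{\omega_\E(\cdot,\bc_\ell)}(z)=\log|z-\bc_\ell|+G_\E(z,\bc_\ell)-G_\E(\bc_\ell,\infty)$ for finite $\bc_\ell$ and $\Phi_{\omega_\E(\cdot,\infty)}(z)=G_\E(z,\infty)-\gamma_\E$, a direct calculation yields
\[
\Phi_{\rho_{\E,\bC}}(z)=\cG_\E(z,\bC)-\frac{\log\lambda_k}{g+1}+Q(z)
\]
on $\overline\C\setminus\E$, so that $-\Phi_{\rho_{\E,\bC}}(x)+Q(x)=\log\lambda_k/(g+1)$ q.e.\ on $\E$. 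By Frostman's characterization, $\rho_{\E,\bC}$ is the weighted equilibrium measure and $F_Q=\log\lambda_k/(g+1)$, so $\kappa_n^{1/n}\to\lambda_k^{1/(g+1)}$ is precisely the content of Stahl--Totik regularity. Combined with Lemma~\ref{lemma:CRegMain}(i), this gives the desired equivalence.

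The main obstacle is the appeal to the varying-weight machinery and the potential-theoretic identification of $F_Q$; note in particular that the naive bound $\kappa_n\geq\tilde\kappa_{j+1}$ gives only the weaker lower bound $e^{\gamma_\E/(g+1)}$ for $\liminf\kappa_n^{1/n}$, so the sharper constant $\lambda_k^{1/(g+1)}$ cannot be read off directly from the polynomial extremal problem and genuinely requires the weighted equilibrium interpretation.
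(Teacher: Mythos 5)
Your proposal is correct in its overall logic and in its potential-theoretic computations, but it takes a genuinely different route from the paper. The paper works with the quasi-everywhere boundary criterion (condition \eqref{it:CRegBoundaryk} of Lemma~\ref{lemma:CRegMain} and its polynomial analogue) and proves both implications by an elementary two-sided comparison of $p_n$ with the $\tau_m$: for one direction it expands $p_n$ in the orthonormal basis $\{\tau_m\}_{m\le n(g+1)}$ (possible precisely because $\infty\in\bC$) and uses that the coefficients have $\ell^2$-norm one; for the other it writes $\tau_n=P_n/R_{g+1}^{j+1}$, approximates $1/R_{g+1}$ uniformly on $\E$ by a polynomial $Q_\epsilon$, and expands $P_nQ_\epsilon^{j+1}$ in the $p_m$'s, letting $\epsilon\to0$ at the end. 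This keeps the proof self-contained and avoids varying weights entirely. You instead go through the leading-coefficient criterion \eqref{it:CRegConstantsk} and the varying-weight theory of \cite[Chapter 3]{StahlTotik92}; this is exactly the shortcut the authors allude to in the introduction. Your identification of the weighted equilibrium measure and Robin constant is correct: the identity $\Phi_{\rho_{\E,\bC}}=\cG_\E(\cdot,\bC)-\frac{\log\lambda_k}{g+1}+Q$ is precisely \eqref{eq:Greenfunction} of the paper (used there in the proof of Theorem~\ref{thmDOSlimit}), and your closing remark correctly explains why the naive comparison $\kappa_n\ge\tilde\kappa_{j+1}$ cannot produce the sharp constant.

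Two points should be nailed down before your argument is complete. First, you invoke the equivalence ``$\mu$ regular $\iff$ weighted leading-coefficient asymptotics'' in both directions; the direction ``regular $\implies$ asymptotics'' is the content of the main theorems of \cite[Section 3.2]{StahlTotik92}, but the converse is a separate statement with its own hypotheses, which must be cited precisely and checked -- they do hold here because $w=\bigl(\prod_{\bc_m\ne\infty}|\bc_m-\cdot|\bigr)^{-1}$ is continuous and strictly positive on $\supp\mu$, but you should also confirm that no Dirichlet-regularity assumption on $\E$ is hidden in the converse (the paper's proof needs none, and the proposition is asserted for arbitrary nonpolar and even polar $\E$). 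Second, some normalizations: $|R_n|^2$ differs from the pure power $\prod_{\bc_m\ne\infty}|\bc_m-x|^{2n/(g+1)}$ by a fixed continuous function bounded away from $0$ and $\infty$ on $\supp\mu$, not by $1+o(1)$; the asymptotics run along the subsequence $n(j)=j(g+1)+k$ with integer weight exponent $2j$; and one should record that $R_{n-1}=R_n$ for $n=j(g+1)+k$, so that $P_n$ really is the $n$-th orthonormal polynomial for $d\mu/|R_n|^2$ with leading coefficient $\pm\kappa_n$. All of these wash out in $n$-th roots, but they belong in the writeup.
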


\begin{proof}
	Let us assume that $\mu$ is $\bC$-regular and let $p_n$ denote the orthonormal polynomial with respect to $\mu$. Fix $z\in\bbC$. Since $\infty$ is in $\bC$,
$p_n\in\cL_{n(g+1)}$, so the orthonormal polynomials can be expressed in the basis of orthonormal rational functions as
\begin{align*}
p_n(z)=\sum_{m=0}^{n(g+1)}c_m\tau_m(z),\quad \sum_{m=0}^{n(g+1)}|c_m|^2=1.
\end{align*}
Thus, in particular, $|c_\ell|\leq 1$ and we get
\begin{equation} \label{19jul1}
|p_n(z)|\leq (1+n(g+1)) \sup_{0\leq m\le n(g+1)}|\tau_m(z)|.
\end{equation}

By Theorem~\ref{thm:CRegMain}, for q.e. $z\in\E$,
\begin{equation}\label{30nov5}
\limsup_{\ell \to\infty} \frac 1\ell \log \lvert \tau_\ell(z) \rvert \le 0.
\end{equation}
Thus, for q.e. $z\in\E$, \eqref{19jul1} implies
\begin{equation}\label{30nov1}
\limsup_{n \to \infty}\frac{1}{n}\log|p_n(z)|\leq 0.
\end{equation}
Thus, $\mu$ is Stahl--Totik regular.

Conversely, assume that $\mu$ is Stahl--Totik regular. For $n = j(g+1) + k$, the polynomial $R_n$ is a divisor of $R_{g+1}^{j+1}$, so we can write $\tau_n = \frac{P_n}{R_{g+1}^{j+1}}$ where $\deg P_n \le n+g$. For any $\epsilon > 0$ there exists a polynomial $Q_\epsilon$ such that $1 - \epsilon \le Q_\epsilon R_{g+1} \le 1+ \epsilon$ on $\E$. Thus,
\begin{equation}\label{30nov2}
\lvert \tau_n(z) \rvert \le (1-\epsilon)^{-j-1} \lvert P_n(z) Q_\epsilon^{j+1}(z) \rvert
\end{equation}
and $\lVert P_n Q_\epsilon^{j+1} \rVert \le (1+\epsilon)^{j+1}$ since $\tau_n$ is normalized.
Since $P_n Q_\epsilon^{j+1}$ is a polynomial of degree at most $n+g+ (j+1) \deg Q_\epsilon$, similarly to the above, representing it in the basis of polynomials shows
\begin{equation}\label{30nov3}
\lvert P_n(z) Q_\epsilon^{j+1}(z) \rvert  \le (1+\epsilon)^{j+1}(n+g+1+(j+1)\deg Q_\epsilon) \sup_{0 \le m \le n+g+(j+1)\deg Q_\epsilon}  \lvert p_n(z) \rvert.
\end{equation}
Since $n+g + (j+1) \deg Q_\epsilon = O(n)$ as $n\to\infty$, the supremum in \eqref{30nov3} grows subexponentially whenever \eqref{30nov1} holds. By \eqref{30nov2}, this implies
\[
\limsup_{n\to\infty} \frac 1n \log \lvert \tau_n(z) \rvert \le \frac{1}{g+1}\log\left(\frac{1+\epsilon}{1-\epsilon}\right).
\]
Since $\epsilon> 0$ is arbitrary, we conclude that \eqref{30nov1} implies \eqref{30nov5}, so \eqref{30nov5} holds q.e.\ on $\E$.
\end{proof}

From this seemingly special case, Theorem~\ref{thmST0}, and Corollary~\ref{thmST1} follow easily:
\begin{proof}[Proof of Theorem~\ref{thmST0}]
By applying a conformal transformation, the special case shows that $\mu$ is $\bC_1$-regular if and only if it is $(\bc_k)$-regular for any single $\bc_k$ in $\bC_1$. By applying this twice, we conclude that if $\bC_1$, $\bC_2$ have a common element, then $\mu$ is $\bC_1$-regular if and only if it is $\bC_2$-regular.

By applying that conclusion twice, we will finish the proof. Namely, for arbitrary $\bC_1$, $\bC_2$, choose a sequence $\bC_3$ which has common elements with both $\bC_1$ and $\bC_2$. Then $\mu$ is $\bC_1$-regular if and only if it is $\bC_3$-regular if and only if it is $\bC_2$-regular.
\end{proof}

\begin{proof}[Proof of Corollary~\ref{thmST1}]
The result follows by taking $\bC_2=(\infty)$ in Theorem~\ref{thmST0}.
\end{proof}

\begin{proof}[Proof of Theorem~\ref{thmST2}]
By Lemma~\ref{lemmaMobius},  $f_* \mu$ is Stahl--Totik regular if and only if $\mu$ is $(f^{-1}(\infty))$-regular, and by Corollary~\ref{thmST1}, this is equivalent to Stahl--Totik regularity of $\mu$.
\end{proof}

\begin{proof}[Proof of Theorem~\ref{thmDOSlimit}]
(a) We note that by Corollary~\ref{thmST1} we may use Theorem~\ref{thm:CRegMain}. Fix $1\leq k\leq g+1$, and use conformal invariance to assume $\bc_k=\infty$. Given a subsequence of $n(j)=j(g+1)+k$, we use precompactness to pass to a further subsequence $n_\ell=j_\ell(g+1)+k$ with $\wlim_{\ell\to\infty}\nu_{n_\ell}=\nu$.
We write
\begin{align}\label{eq:Greenfunction}\cG_\E(z,\bC)=\Phi_{\rho_{\E,\bf C}}(z)+\frac{1}{g+1}\log\lambda_k-\frac1{g+1}\sum_{\substack{m=1\\m\ne k}}^{g+1}\log|z-\bc_m|\end{align}
which we will use to show $\Phi_{\nu}=\Phi_{\rho_{\E,\bf C}}$.
By \eqref{it:CRegConstants}, we may apply Theorem~\ref{thm:June9} with $\alpha=\frac{1}{g+1}\log\lambda_k$. Then, \eqref{it:uniformlimit} yields $h=\cG_\E$ off the real line, and thus the equality between the representations \eqref{24jul1} and
\eqref{eq:Greenfunction}
gives $\Phi_{\nu}(z)=\Phi_{\rho_{\E,\bf C}}(z)$ on $\C\setminus \R$. By the weak identity principle, this equality extends to $\C$. Applying the distributional Laplacian to both sides gives $\nu=\rho_{\E,\bf C}$. Thus, $\wlim_{n\to\infty}\nu_n= \rho_{\E,\bf C}$.

(b) The main ingredient is a variant of Schnol's theorem; for any $n$, $\int \lvert \tau_n \rvert^2 \ d\mu = 1$, so
\[
\sum_{n=1}^\infty n^{-2} \int \lvert \tau_n \rvert^2 \ d\mu  < \infty.
\]
By Tonelli's theorem, it follows that $\sum_{n=1}^\infty  n^{-2} \lvert \tau_n \rvert^2 < \infty$ $\mu$-a.e., so there exists a Borel set $B\subset \bbC$ with $\mu(\bbC\setminus B)=0$ such that
\begin{equation}\label{8aug1}
\limsup_{n\to\infty} \frac 1n \log \lvert \tau_n(z) \rvert \le 0, \qquad \forall z \in B.
\end{equation}
Suppose $\mu$ is not regular. Then, by Theorem~\ref{thm:CRegMain} \eqref{it:CRegConstants}, there is a $1\leq k\leq g+1$ with
\[
\limsup_{j\to\infty}\frac{1}{n(j)}\log\kappa_{n(j)} > \frac{1}{g+1}\log \lambda_k.
\]
Using conformal invariance, we may assume $\bc_k=\infty$, and we can pass to a subsequence $n_\ell = j_\ell (g+1)+ k$ such that $\alpha := \lim_{\ell \to\infty}\frac{1}{n_\ell}\log \kappa_{n_\ell} > \frac{1}{g+1}\log \lambda_k $, where $\alpha\in \R\cup \{+\infty\}$ by Theorem~\ref{thm:June9} \eqref{it:impossible}.
Since $\wlim_{n\to \infty}\nu_n=\rho_{\E,\bC}$, by comparing \eqref{24jul1} and \eqref{eq:Greenfunction}, we have for $z\in \C\setminus \R$,
\begin{equation}\label{26aug1}
\lim_{\ell \to \infty}\frac{1}{n_\ell }\log \lvert \tau_{n_\ell}(z) \rvert =\cG_\E(z,\bC)+d
\end{equation}
where $d=\alpha-\frac{\log\lambda_k}{g+1} > 0$. By the upper envelope theorem applied to the sequence $\{\nu_{n_\ell}\}_{\ell \in\N}$, there exists a polar set $X$ such that \eqref{26aug1} also holds for all $z \in \bbC \setminus X$. Moreover, since $\cG_\E(z,\bC) \ge 0$ for all $z \in \bbC$, we conclude that
\[
\limsup_{n\to\infty} \frac 1n \log \lvert \tau_n(z) \rvert \ge \lim_{\ell \to \infty}\frac{1}{n_\ell }\log \lvert \tau_{n_\ell}(z) \rvert \geq d, \qquad \forall z \in \bbC \setminus X.
\]
Comparing with \eqref{8aug1} shows that $B \subset X$, so $\mu$ is supported on the polar set $X$.
\end{proof}

\begin{proof}[Proof of Theorem~\ref{thmRegularityGMP}]
Defining $n(j)=j(g+1)+k_\infty$ and using Lemma~\ref{claim:1} to compute a telescoping product,
\begin{equation}\label{16aug5}
\left( \prod_{\ell=1}^{j} \beta_\ell \right)^{1/j} = \left( \prod_{\ell=1}^j \frac{\kappa_{n(\ell)}}{\kappa_{n(\ell+1)}} \right)^{1/j} = \kappa_{n(1)}^{1/j}\kappa_{n(j+1)}^{-1/j}.
\end{equation}
The first term on the right-hand side is independent of $j$, so $\kappa_{n(1)}^{1/j} \to 1$ as $j\to \infty$. For the second factor, using Theorem~\ref{thm12} we compute
\[
\liminf_{j\to\infty}\kappa_{n(j+1)}^{1/j}\geq  \lambda_{k_\infty}
\]
and we have the upper bound \eqref{16aug3} for the $\limsup$ of  \eqref{16aug5}. Similarly, using the criterion Theorem~\ref{thm:CRegMain} (\ref{it:CRegConstants}), it follows from \eqref{16aug5} that $\mu$ is $\bC$-regular if and only if \eqref{16aug4} holds.
\end{proof}

\section{GMP matrices 2}\label{sec:CNCondition}
The proof of Theorem \ref{conj1} will rely heavily on the results of \cite{YuditskiiAdv}. In this section we will recall some properties of GMP matrices  from \cite{YuditskiiAdv} which we will use in the proof of Theorem \ref{conj1}. However, in order to justify the use of those constructions, we need to add some explanation of the structure of GMP matrices. This technical explanation is necessary in order to understand the action on Jacobi matrices caused by a single coefficient stripping step on GMP matrices; since such a step changes the location of $\infty$ in the sequence of poles, it links our GMP matrices which naturally arise from ORF expansions, and those in \cite{YuditskiiAdv}, which naturally arise from functional models of reflectionless operators. This link will allow us to use parts of the analysis of \cite{YuditskiiAdv}.

As noted in the beginning of Section \ref{sec:GMPandGrowth}, GMP matrices split up into blocks due to the appearance of some $c_{k_\infty}=\infty$. However, there is a choice whether to place the ``window'' of block size $(g+1)\times (g+1)$ so that $c_{k_\infty}$ is the last element of the previous block, or the first element of the next block. In this paper, the latter choice has been more natural  (i.e., to split before $\infty$), because it corresponds to the choice $\tau_0=1$ in the rational function construction. From now on, we will call this the RF structure. On the other hand, in \cite{YuditskiiAdv} the first choice was more natural (i.e., to split after $\infty$) for the functional model construction, and we will call this the FM structure. Alternatively, recall that we discussed in Remark \ref{rem:GMPstructure} that one could view the GMP structure also as overlapping $(g+2)\times(g+2)$ blocks. The RF-structure then corresponds to placing the $(g+1)\times(g+1)$ $B$ block at the upper left corner of the bigger block, whereas the FM structure corresponds to placing the $B$ blocks at the lower right corner. This is shown in the figure below, where the blue lines indicate a $B$ block corresponding to the RF structure and the red lines a $B$ block corresponding to the FM structure. Moreover, $\tilde p_0$ denotes the positive entry on the outermost diagonal:
\begin{align*}
\begin{tikzpicture}[baseline=-0.5ex]
\definecolor{my_orange}{RGB}{243, 171, 0};  
\definecolor{my_green}{RGB}{181,230,29};
\matrix (m) [matrix of math nodes, nodes in empty cells, column sep=0mm, row sep=0mm, nodes={rectangle, 
	minimum size=1.2em, text depth=0.25ex,
	inner sep=0pt, outer sep=0pt,
	fill opacity=0.5, text opacity=1,
	anchor=center},
left delimiter={[},right delimiter={]},ampersand replacement=\&] {
	\ddots \& \& 			    \& 			    \&  \&          	      \&  \&   	\&			   \\
	\& \infty  \& 			  	\& 		\&          \&       		      \&  \tilde p_0\&		\&			   \\
	\&   \&  \& 	\&          \&\&  \&    \&  \\
	\&   \& 			    \& 	    \&          \&       		      \&  \&		\&			   \\
	\&   \& 			    \&               \&  \&       		      \&  \&	\&			\\
	\& \& 			    \& 		 \&  \&          	      \&  \&   	\&			  \\
	\&\tilde p_0 \& 			    \& 		 \&  \&          	      \&  \infty\&  	\&			   \\
	\& \& 			    \& 		 \&  \&          	      \&  \&  	\&		\ddots 	   \\	
} ;
\draw (-2.17,0.84) -- (-1,0.84);
\draw (-1.05,0.84) -- (-1.05,1.9);
\draw (0.635,-0.8) -- (0.635,-1.8);
\draw (0.6,-0.845) -- (2.16,-0.845);
\draw (m-7-2.south west) rectangle (m-2-7.north east);
\draw[pattern={Lines[angle=-45,distance={2pt},
	line width=0.5pt]},pattern color=blue,opacity=0.5] (m-6-2.south west) rectangle (m-2-6.north east);
\draw[pattern={Lines[angle=45,distance={2pt},
	line width=0.5pt]},pattern color=red,opacity=0.5] (m-7-3.south west) rectangle (m-3-7.north east);
\end{tikzpicture}
\end{align*}

The two structures can be translated into each other, by means of the formulas \eqref{eq:pqTranslation} presented below.
Moreover, we will show below that they are also linked by a coefficient stripping formula. 

For the reader's convenience, we recall the FM structure of GMP matrices as introduced in \cite{YuditskiiAdv}. Although the RF and FM structure are just a different interpretation of the same object, namely a GMP matrix, it will be convenient to have a separate notation. For a GMP matrix written in the FM structure we will use $A$, respectively for its blocks $A_k,B_k$, and we will use $\tilde A,\tilde A_k,\tilde B_k$, for GMP matrices written in the RF structure. Note that this is a change from the notation used in previous sections. 

Fix a finite sequence $\bC=(\bc_1,\dots, \bc_g)$ and recall that $X^-$ denotes the upper triangular part of a matrix $X$ (excluding the diagonal), and $X^+$ the lower triangular part (including the diagonal). Then we say that $A$ acting on $\ell^2(\bbZ)$ is GMP structured, and denote it by $A\in \A$, if it is a $(g+1)$-block Jacobi matrix
\begin{equation*}
A=\begin{bmatrix}
\ddots&\ddots&\ddots&& &\\
&A^*_{-1}&B_{-1}&A_0& & \\
& &A^*_{0}&B_{0}&A_1& \\
& & &\ddots&\ddots&\ddots
\end{bmatrix}
\end{equation*}
such that
\begin{equation*}
A_j=\delta_g \vp_j\!^*,
\quad
B_j
=(\vq_j \vp_j\!^*)^-+(\vp_j\vq_j\!^*)^++\hat\bC,\quad \vp_j,\vq_j\in\bbR^{g+1},
\end{equation*}
and
\begin{equation*}
\hat \bC=\begin{bmatrix}
\bc_1& & & \\
& \ddots& & \\
& & \bc_g & \\
& & &0
\end{bmatrix},\ 
\vp_j=
\begin{bmatrix}
p^{(j)}_0\\
\vdots\\
p^{(j)}_g
\end{bmatrix}, \ 
\vq_j=
\begin{bmatrix}
q^{(j)}_0\\
\vdots\\
q^{(j)}_g
\end{bmatrix}, \quad p^{(j)}_g>0.
\end{equation*}
We then say an operator $A\in  \A$ is a two-sided GMP matrix, if the resolvents  $(\bc_\ell - A)^{-1}$ exist for all $1\leq \ell \leq g$ and $S^{-\ell}(\bc_\ell - A)^{-1}S^\ell \in \A$. In this case we write $A\in\GMP(\bC).$ Again we call the generating coefficients $\{\vp_j,\vq_j\}_{j\in\Z}$ the GMP coefficients of $A$.

We encounter several differences compared to  the RF structure presented in Section \ref{sec:GMPandGrowth}. First of all the 0 in $\hat \bC$ is now in the last, rather than in the first, position. Moreover, in the definition of $A_j$, the vector $\vp_j$ is now a row vector in the last row, rather than a column vector in the first column. This is exactly due to shifting the position of $\infty$ as described above. Extending the structure of GMP matrices to two-sided operators on $\ell^2(\bbZ)$, it is not hard to see that the RF and the FM structure can be translated into each other, simply by shifting the window of size $(g+1)\times(g+1)$ by one. In this process the role of $p_j$ and $q_j$ changes. that is,  for $1\leq k \leq g$ we have 
\begin{align}\label{eq:pqTranslation}
\tilde p_k^{(j)}=q_{{k-1}}^{(j)}p_g^{(j)},\quad \tilde q_k^{(j)}=\frac{p_{k-1}^{(j)}}{p_g^{(j)}}.
\end{align}
More importantly for us is that the positive entries are the same, i.e., 
\[
p_g^{(j)}=\tilde p_0^{(j)}.
\]
Following \cite{YuditskiiAdv}, we explain how to associate to a given GMP matrix a Jacobi matrix. Let $e_j$ denote the standard basis vectors in $\ell^2(\bbZ)$; recall that $\{e_{-1},e_0\}$ forms a spectral basis for two-sided Jacobi matrices in the sense that $\{ J^n e_j \mid n\in\bbN_0, j=-1,0 \}$ is dense in $\ell^2(\bbZ)$. Define the matrix resolvent function by 
\begin{align*}
R^J(z)=\begin{bmatrix}
\langle (J-z)^{-1}e_{-1},e_{-1}\rangle& \langle (J-z)^{-1}e_{0},e_{-1}\rangle\\
\langle (J-z)^{-1}e_{-1},e_{0}\rangle&\langle (J-z)^{-1}e_{0},e_{0}\rangle
\end{bmatrix}.
\end{align*}
Let $\ell^2_+=\ell^2(\bbN_0)$ and $\ell^2_-=\ell^2(\bbZ)\ominus\ell^2_+$ and $\Pi_\pm$ denote the projection onto $\ell^2_\pm$. Define $J_\pm=\Pi_\pm J\Pi_\pm$  and define the half-line resolvent functions by 
\[
m_+^J(z)=\langle (J_+-z)^{-1}e_0,e_0\rangle,\quad m_-^J(z)=\langle (J_--z)^{-1}e_{-1},e_{-1}\rangle.
\]

Then, essentially due to the structure 
\begin{align*}
J=\begin{bmatrix}J_-& 0\\
0& J_+
\end{bmatrix}
+
a_0(\langle\cdot,e_{-1}\rangle  e_0+\langle\cdot, e_{0}\rangle e_{-1}),
\end{align*}
one can see that 
\begin{align}\label{eq:ResolventJacobi}
R^J(z)=\begin{bmatrix}
m_{-}^J(z)^{-1}& a_0\\
a_0& m_+^J(z)^{-1}
\end{bmatrix}^{-1};
\end{align}
cf. \cite[pg 758]{DamYudAdvances}.

For GMP matrices, we need to modify the spectral basis. Define 
\begin{align}\label{eq:tilee0}
\tilde e_0=\frac{1}{a_0}\Pi_+Ae_{-1},\quad a_0=\|\Pi_+Ae_{-1}\|,
\end{align}
with the natural embedding into $\ell^2(\bbZ)$. Note that
\[
a_0\tilde e_0^\intercal=\begin{bmatrix}
\dots & 0&|&p^{(0)}_0& p^{(0)}_1&\dots&p^{(0)}_g&0&\dots 
\end{bmatrix}.
\]
Then $\{e_{-1},\tilde e_0\}$ form a spectral basis for $A$ and similarly as for Jacobi matrices we have 
\begin{align*}
A=\begin{bmatrix}A_-& 0\\
0& A_+
\end{bmatrix}
+
a_0(\langle\cdot,e_{-1}\rangle \tilde e_0+\langle\cdot,\tilde e_{0}\rangle e_{-1}).
\end{align*}
This allows us to define 
\begin{align*}
R^A(z)=\begin{bmatrix}
\langle (A-z)^{-1}e_{-1},e_{-1}\rangle& \langle (A-z)^{-1}\tilde e_{0},e_{-1}\rangle\\
\langle (A-z)^{-1}e_{-1},\tilde e_{0}\rangle&\langle (A-z)^{-1}\tilde e_{0},\tilde e_{0}\rangle
\end{bmatrix},
\end{align*}
and
\begin{align}\label{eq:resolvent}
m^A_-(z)=\langle (A_--z)^{-1}e_{-1},e_{-1}\rangle,\quad m^A_+(z)=\langle (A_+-z)^{-1}\tilde e_{0},\tilde e_{0}\rangle
\end{align}
and find similar to the Jacobi case that
\begin{align*}
R^A(z)=\begin{bmatrix}
m^A_{-}(z)^{-1}& a_0\\
a_0& m^A_+(z)^{-1}
\end{bmatrix}^{-1}.
\end{align*}

For a given GMP matrix $A$, the associated Jacobi matrix is simply defined by setting the resolvent functions to be equal, i.e., 
\begin{align}\label{eq:sameResolvents}
R^J(z)=R^A(z).
\end{align}
Note that this defines $J$ uniquely. Due to the common vector $e_{-1}$, it follows that 
\begin{align}\label{eq:abCoeff}
b_{-1}=\langle Je_{-1},e_{-1}\rangle=\langle Ae_{-1},e_{-1}\rangle=p_{g}^{(-1)}q_{g}^{(-1)},\quad a_0=\|\Pi_+ Je_{{-1}}\|=\|\Pi_+ Ae_{-1}\|=\|\vp_0\|,
\end{align}
which explains in hindsight the definition of $a_0$ in  \eqref{eq:tilee0}. 

\subsection{Shifts on GMP matrices}
For a vector $x\in \ell^2(\bbZ)$, let $|$ denote the splitting of $\ell^2_-$ and $\ell^2_+$, i.e., we write $x^\intercal=\begin{bmatrix}
\dots & x_{-1}|x_0& \dots
\end{bmatrix}$. We chose the vector of poles in the following way  $\begin{bmatrix}
\dots &\infty|\bc_1& \dots & \bc_g& \infty& \bc_1& \dots
\end{bmatrix}$.
That is for $A_+=\Pi_+A\Pi_+$ the first pole is $\bc_1\in\bbR$. However, if we consider $\tilde A_+=\Pi_+SAS^{-1}\Pi_+$, where $Se_k=e_{k+1}$ denotes the right shift, then the first pole of $\tilde A_+$ is $\infty$. 
\begin{align*}
\tilde A_+=\begin{tikzpicture}[baseline=-0.5ex]
\matrix (m) [matrix of math nodes, nodes in empty cells, column sep=0mm, row sep=0mm, nodes={rectangle, 
	minimum size=1.2em, text depth=0.25ex,
	inner sep=0pt, outer sep=0pt,
	fill opacity=0.5, text opacity=1,
	anchor=center},
left delimiter={[},right delimiter={]},ampersand replacement=\&] {
	\infty \,\,\,\& \& 			    \& 			    \&		   \\
	\& \mathbf {c}_1 \& 			  	\& 		\&            \\
	\&   \&  \& 	\&           \\
	\&   \& 			   	  \&\,\,A_+     \&         	   \\
	\&   \& 			    \&            \& 		\\
} ;
\draw (-1.5,0.7) -- (1.5,0.7);
\draw (-0.8,-1.2) -- (-0.8,1.18);
\end{tikzpicture}
\end{align*}

The resolvent functions of $A_+$ and $\tilde A_+$ are related by a coefficient stripping formula:

\begin{lemma}
	Let $A\in\GMP(\bC)$, $A_+=\Pi_+A\Pi_+$, $\tilde e_0,a_0,b_{-1}$ as in \eqref{eq:tilee0}, \eqref{eq:abCoeff} and define $\tilde A_+=\Pi_+SAS^{-1}\Pi_+$. Then the resolvent functions
	\begin{align*}
	m_+(z)=\langle (A_+-z)^{-1}\tilde e_0,\tilde e_0\rangle,\quad \tilde m_+(z)=\langle (\tilde A_+-z)^{-1}e_0,e_0\rangle
	\end{align*}
	are related by the coefficient stripping formula
	\begin{align}\label{eq:coeffStripping}
	\tilde m_+(z)=\frac{1}{b_{-1}-z-a_0^2m_+(z)}.
	\end{align}
\end{lemma}
\begin{proof}
	Recall that $S_+$ denotes the right shift on $\ell^2_+$ and define  
	\[
	f_0=S_+\tilde e_0=\frac{1}{a_0}\begin{bmatrix}
	0& p_0^{(0)}& p_1^{(0)}& \dots &p_g^{(0)}& 0& \dots
	\end{bmatrix}.
	\]
	Then we have 
	\begin{align*}
	\tilde A_+=\begin{bmatrix}b_{-1}& 0\\
	0& A_+
	\end{bmatrix}
	+
	a_0(\langle\cdot,e_{0}\rangle  f_0+\langle\cdot, f_{0}\rangle e_{0}).
	\end{align*}
	Then as for Jacobi matrices this implies \eqref{eq:coeffStripping}; cf. \cite[Theorem 3.2.4]{SimonSzego}.
\end{proof}

This lemma has a very natural interpretation. As we discussed above, GMP matrices split into blocks where $c_{k_\infty}=\infty$ and then there is some choice if we place $\infty$ as the last or the first element of a block. However, this discussion is irrelevant for Jacobi matrices, where all $c_k\equiv \infty$. Thus, if we associate to $A$ a Jacobi matrix $J$ by \eqref{eq:sameResolvents} and define $J_+=\Pi_+J\Pi_+$ and $\tilde J_+=\Pi_+SJS^{-1}\Pi_+$ and the associated $m_+$, $\tilde m_+$, then \eqref{eq:coeffStripping} becomes the standard coefficient stripping for Jacobi matrices.

There is another natural shift on GMP matrices. Namely, since the shift $A^{(1)}=S^{-(g+1)}AS^{(g+1)}$ preserves the GMP structure, one can describe how the resolvent functions of $A$ and $A^{(1)}$ are related. This will be done by so-called elementary Blaschke-Potapov factors of the third kind with poles at $\bc_1,\dots,\bc_g,\infty$; cf. \cite{ArovDym,Pot}.  In the following it will be convenient to use the notation 
\[
\bp=(p,q),\quad \vbp=(\vp,\vq)
\]

\begin{definition}
	For $p,q,\bc\in\bbR$ 
	\begin{equation}\label{facto2}
	\fa(z,\bc;\bp)=I-\frac{1}{\bc-z}\begin{bmatrix}p\\q\end{bmatrix} \begin{bmatrix}p&q\end{bmatrix}\fj=\exp\left(-\frac{1}{\bc-z}\begin{bmatrix}p\\q\end{bmatrix} \begin{bmatrix}p&q\end{bmatrix}\fj\right),\quad
	\fj=
	\begin{bmatrix}
	0&-1\\
	1&0
	\end{bmatrix}.
	\end{equation}
	represents the so-called Blaschke-Potapov factor of the third kind with a real pole $\bc$. 
	If $\bc=\infty$ it is of the form
	\begin{equation}\label{thematrix}
	\fa(z;\bp)=\fa(z,\infty;\bp)=
	\begin{bmatrix}
	0&-{p}\\
	\frac 1{p}&\frac{z-pq}{p}
	\end{bmatrix}.
	\end{equation}
Define the matrix function
\begin{align}\label{eq:matrixFunction}
\fA(z,\vbp)=\begin{bmatrix}
\fA_{11}& \fA_{12}\\\fA_{21}&\fA_{22}
\end{bmatrix}(z,\vbp)=\fa(z,\bc_1;\bp_0)\dots\fa(z,\bc_g;\bp_{g-1})\fa(z;\bp_g).
\end{align}
\end{definition}
The important role of the function $\fA(z,\vbp)$ will become clear by the following theorem.
\begin{theorem}{\cite[Theorem 2.13 and Theorem 2.15]{YuditskiiAdv}}
	Let $A\in \GMP(\bC)$, $A^{(1)}=S^{-(g+1)}AS^{(g+1)}$ and $A_+$ and $A^{(1)}_+$ the projections onto $\ell^2_+$. Let $m^{A}_+$ and $m^{A^{(1)}}_+$ be the resolvent functions defined by \eqref{eq:resolvent}. Let $a_0^2=\|\vp_0\|^2,\quad (a_0^{(1)})^2=\|\vp_0^{\,(1)}\|^2$. Then 
	\begin{align}\label{eq:cfGMP}
	a_0^2m^{A}_+(z)=\frac{\fA_{11}(z,\vbp_0)((a_0^{(1)})^2m^{A^{(1)}}_+(z))+\fA_{12}(z,\vbp_0)}{\fA_{21}(z,\vbp_0)((a_0^{(1)})^2m^{A^{(1)}}_+(z))+\fA_{22}(z,\vbp_0)}.
	\end{align}
\end{theorem}

\subsection{Periodic GMP matrices}
We call a two-sided GMP matrix 1-periodic or simply periodic if $S^{g+1} A S^{-(g+1)} = A$. In this case $m^{A^{(1)}}_+=m^{A}_+$ and \eqref{eq:cfGMP} is a quadratic equation for $m^{A}_+$. This allows to describe the spectrum of $A$ in terms of the function $\fA(z)$.
\begin{theorem}{\cite[Theorem 1.8]{EichSIGMA16}}
	Let $A=A(\vbp)\in\GMP(\bC)$ be a periodic GMP matrix and $\fA(z,\vbp)$ be as in \eqref{eq:matrixFunction} and define the discriminant by 
	\[
	\Delta(z)=\tr \fA(z,\vbp).
	\]
	Then the spectrum of $A$ is a finite union of intervals, it is purely absolutely continuous and of multiplicity $2$ and it is given by
	\[
	\sigma(A)=\Delta^{-1}([-2,2])=\{z\in\bbC|\ \Delta(z)\in [-2,2]\}.
	\]
\end{theorem}

The inverse problem can also be answered explicitly. Namely, given a finite union of intervals $\E$, are there periodic GMP matrices with the given spectrum and if so can one describe the set of all such matrices? Crucially, the answer to both questions is positive for the special choice $\bC=\bC_\E$, where $\bC_\E$ denotes the zeros of the Ahlfors function associated to $\E$. 
We define the isospectral torus of periodic two-sided GMP matrices by
\begin{align*}
\cT_\E(\bC_\E)=\{\mathring A\in\GMP(\bC_\E), \mathring A \text{ is periodic and } \sigma(\mathring A)=\E\}.
\end{align*}
Henceforth, we will use $\mathring A$ for elements from $\cT_\E(\bC_\E)$. We point out that for arbitrary finite gap sets, the isospectral torus of Jacobi matrices usually consists of almost periodic operators, whereas for GMP matrices we can always work with periodic operators. This also makes it possible to characterize the isospectral torus by a magic formula for GMP matrices. Moreover, this then can be used to describe $\cT_\E(\bC_\E)$  also as an algebraic manifold.
Recall that $\L_n$ denotes the outermost positive entry of the resolvents $(\bc_\ell-A)^{-1}$. That is if $A\in\GMP(\bC)$  is a periodic GMP matrix let 
\begin{align*}
\L_\ell(A)&=\langle e_\ell, (\bc_{\ell+1}-A)^{-1}e_{\ell+g+1}\rangle\quad \text{ for }0\leq \ell\leq g-1,  \\
\L_{g}(A)&=\langle e_{g}, Ae_{2g+1}\rangle.
\end{align*}

The resolvent entries can again be given explicitly in terms of $\Delta(z)$. 
\begin{lemma}\cite[Theorem 2.17]{YuditskiiAdv}
	Let $A\in\GMP(\bC)$  be a periodic GMP matrix. Then for $0\leq \ell\leq g-1$
	\begin{align}\label{eq:ResolPeriodic}
	\L_\ell(A)=-(\Res_{\bc_{\ell+1}}\Delta)^{-1}=-\bigg(\tr\bigg(\prod_{k=0}^{\ell-1}\fa(\bc_{\ell+1},\bc_{k+1};\bp_{k})\begin{bmatrix}p_{\ell}\\q_{\ell}\end{bmatrix} \begin{bmatrix}p_{\ell}&q_{\ell}\end{bmatrix}\fj\prod_{k=\ell+1}^{g-1}\fa(\bc_{\ell+1},\bc_{k+1};\bp_{k})\fa(\bc_{\ell+1},\bp_g)\bigg)\bigg)^{-1}
	\end{align}
\end{lemma}

This allows to describe $\cT_\E(\bC_\E)$ as an algebraic manifold. Let us fix a finite union of $g+1$ intervals and let $\Delta_\E$ denote the associated discriminant defined in terms of the Ahlfors function \eqref{eq:discrRational}. Then for coefficients $\vbp$ let $A(\vbp)\in \GMP(\bC_\E)$ be a periodic GMP matrix and define 
\begin{align*}
f_0(\vbp)&=\l_{g+1}\langle\vp,\vq\rangle+d,\\
f_\ell(\vbp)&=\L_{\ell-1}(A(\vbp))\l_\ell-1,\quad\quad  \text{for } 1\leq \ell\leq g+1.
\end{align*}
and $\bF_\E:U\subset\bbR^{2(g+1)}\to \R^{g+2}$ by
\begin{align}\label{eq:DefAlgebraic}
\bF_\E(\vbp)=\begin{pmatrix}
f_0(\vbp),
&\dots&f_{g+1}(\vbp)
\end{pmatrix}.
\end{align}
We then define the isospectral manifold by
\[
\cI\cS_\E=\{\vbp\in\bbR^{2g}: \bF_\E(\vbp)=0\}.
\]
The name is justified by the following theorem:
\begin{theorem}{\cite[Theorem 1.6 and Theorem 1.10]{EichSIGMA16}}
	Let $A\in\GMP(\bC_\E)$, then
	\begin{align}\label{eq:magicFormula1}
	A\in \cT_\E(\bC_\E)\iff \Delta_\E(A)=S^{g+1}+S^{-(g+1)}.
	\end{align}
	Moreover, for $\vbp$ such that $A(\vbp)\in \GMP(\bC_\E)$ we have that 
	\begin{align*}
	A(\vbp)\in \cT_\E(\bC_\E)\iff \bF_\E(\vbp)=0.
	\end{align*}
\end{theorem}

\subsection{Resolvents in the general case and the Jacobi flow}
Similar to \eqref{eq:ResolPeriodic} one can also find explicit expressions for $\L_n$ for general (not necessarily periodic) GMP matrices. Let $A\in \GMP(\bC)$ and for $n=j(g+1)+\ell$ for $j\in\bbZ$ and $0\leq \ell\leq g$ set
\begin{align*}
	\L_n(A)=\begin{cases}
	\langle e_n,(\bc_{\ell+1}-A)^{-1}e_{n+g+1}\rangle,\quad \ell\neq g,\\
	\langle e_n,Ae_{n+g+1}\rangle.
	\end{cases}
\end{align*}
\begin{lemma}{\cite[Lemma 3.2]{YuditskiiAdv}}
	Let $A\in \GMP(\bC)$. Then for $n=j(g+1)+\ell$ and $\ell\neq g$ we have 
	\begin{align}\label{eq:Resolvent}
	\L_n(A)=-\bigg(\tr\bigg(\prod_{k=0}^{\ell-1}\fa(\bc_{\ell+1},\bc_{k+1};\bp_{k}^{(j+1)})\begin{bmatrix}p^{(j+1)}_{\ell}\\q^{(j+1)}_{\ell}\end{bmatrix} \begin{bmatrix}p^{(j)}_{\ell}&q^{(j)}_{\ell}\end{bmatrix}\fj\prod_{k=\ell+1}^{g-1}\fa(\bc_{\ell+1},\bc_{k+1};\bp^{(j)}_{k})\fa(\bc_{\ell+1},\bp^{(j)}_g)\bigg)\bigg)^{-1}
	\end{align}
\end{lemma}

The explicit representation will be crucial in the following. Moreover, let us mention that due to the finite band block structure of GMP matrices, building (formal) resolvents is a purely local computation (compare e.g. \cite[eq (3.8)]{YuditskiiAdv}). This can be seen by the formula above, where only the entries of $A$ from the blocks $j$ and $j+1$ are needed to compute $\L_n$. 

Recall that we discussed already in the beginning of this section that to any GMP matrix $A$ we can associate a Jacobi matrix $J$, namely by setting the resolvent functions equal \eqref{eq:sameResolvents}. Let us denote this map by $\cF$. It is a deep result from \cite[Proposition 5.5.]{YuditskiiAdv} that this map is (up to a certain identification) invertible. An important question is if we can express the Jacobi parameters of $J=\cF A$ in terms of the coefficients of $A$. 
Let $\{\vbp_j\}$ denote the GMP coefficients and $\{a_j,b_j\}$ the Jacobi coefficients. Then we have already seen that
\[
a_0=\|\vp_0\|,\quad b_{-1}=q_g^{(-1)}p_g^{(-1)}.
\]
Let 
\[
\mathcal S J=S^{-1} J S.
\]
and note that
\[
a_0(\mathcal S J)=a_1(J),\quad b_{-1}(\mathcal S J)=b_0(J),
\]
where by $a_k(J), b_k(J)$ we mean the Jacobi parameters of the Jacobi matrix $J$. 
Thus, if one understands the transform on GMP matrices which is induced by the shift action on Jacobi matrices, one can inductively obtain the Jacobi parameters by the formulas above. This leads to the definition of the Jacobi flow on GMP matrices, which is defined by the following commutative diagram:
\begin{equation}\label{defjfg+}
\begin{array}{ccc}
\text{GMP} & \xrightarrow{\mathcal J}  & \text{GMP}  \\
&   &    \\
_{\mathcal F}  \big\downarrow &   &  _{\mathcal F} \big\downarrow  \\
&   &    \\
\text{Jacobi}  &  \xrightarrow{\mathcal S}  &  \text{Jacobi}
\end{array}
\end{equation}
Let us mention that this is one of the reasons why it is convenient to work with two-sided operators. If in this construction we considered the shift action on $\ell^2_+$, which is not unitary, then it is possible that for some $m$, $\bc_k\in\sigma((S_+^*)^mJ_+S_+^m)$, and thus the corresponding half-line GMP matrix would not be well defined.

The Jacobi flow is defined and discussed in \cite[Section 4]{YuditskiiAdv}. We provide the motivating ideas of the Jacobi flow and its precise definition below. First, note that in \cite{YuditskiiAdv}, we have the ordering of the poles 
\[
\bC_A:=\left[\begin{array}{cc|ccccc:cc}
\dots & \infty&\bc_1& \bc_2&\dots&\bc_g&\infty&\bc_1&\dots 
\end{array}\right]
\]
and recall that we anchored the blocks between $\infty$ (at position $-1$) and $\bc_1$ (at position $0$). Note that for Jacobi matrices, all poles are equal to $\infty$, and $\cS J$ corresponds to shifting an $\infty$-pole from position $0$ to position $-1$. Now applying the spacial shift to GMP matrices would be of a different flavor, as it shifts $\bc_1$ from $0$ to $-1$. Thus, one first has to shift $\infty$, which is now at position $g+1$ to the front, and then one may apply the spacial shift. This is done in $g$-steps. The $\cO$ transform defined below corresponds changing the order from $\bC_A$ to 
\[
\tilde\bC_A=\left[\begin{array}{cc|cccccc:cc}
\dots & \bc_g&\bc_1& \bc_2&\dots&\bc_{g-1}&\infty&\bc_g& \bc_1\dots
\end{array}\right].
\]

Letting
\[
\bo(\phi)=\begin{bmatrix}
\sin \phi& \cos \phi\\\cos\phi& -\sin\phi
\end{bmatrix}
\]
we make the following definition.
\begin{definition}
	We define the map:
	\[
	\cO:\GMP(\bc_1,\bc_2,\dots,\bc_g)\to \GMP(\bc_g,\bc_1,\dots,\bc_{g-1})
	\]
	in the following way. Let $O=O_A$ be the block-diagonal matrix
	\[
	O=\begin{bmatrix}
	\ddots&&&\\
	&O_{-1}&&\\
	&&O_0&\\
	&&&\ddots
	\end{bmatrix}
	\]
	where $O_k$ are the $(g+1)\times(g+1)$ orthogonal matrices
	\begin{align}\label{eq:sinDef}
	O_k=\begin{bmatrix}
	I_{g-2}&0\\
	0& \bo(\phi_k)
	\end{bmatrix},\quad\begin{bmatrix}
	\sin\phi_k&\cos\phi_k
	\end{bmatrix}
	=\frac{\begin{bmatrix}
		p_{g-1}^{(k)}&p_{g}^{(k)}
		\end{bmatrix}}{\sqrt{(p_{g-1}^{(k)})^2+(p_{g}^{(k)})^2}}.
	\end{align}
	Then
	\begin{align}\label{def:Otransform}
	\cO A:=SO_A^*AO_AS^{-1}.
	\end{align}
\end{definition}
As explained above, the Jacobi flow then is defined by applying $\cO$ $g$-times, in order to shift $\infty$ through the full block. This leads to the following definition:
\begin{definition}
	We define the Jacobi flow transform 
	\[
	\cJ:\GMP(\bC_A)\to \GMP(\bC_A)
	\]
	by
	\[
	\cJ A:= S^{-(g+1)}\cO^{\circ g}AS^{g+1}.
	\]
\end{definition}
It is shown in \cite[Equation (4.8) and Lemma 4.4]{YuditskiiAdv} that there exists a block-diagonal unitary mapping $U_A$,  such that
\begin{align}\label{eq:JacobiFlow}
\cJ A=S^{-1}U_A^*AU_AS.
\end{align}

Let us also note that 
\begin{align}\label{eq:commJacandO}
S^{-(g+1)}\cO(A)S^{(g+1)}=\cO(S^{-(g+1)}AS^{(g+1)}),
\end{align}
which has the consequences
\begin{align}\label{eq:commJacandShift}
\cO(\cJ^{\circ m}A)=\cJ^{\circ m}(\cO A)
\end{align}
and
\[
S^{-(g+1)}(\cJ^{\circ m}A)S^{(g+1)}=\cJ^{\circ m}(S^{-(g+1)}AS^{(g+1)}).
\]
\section{Proof of Theorem \ref{conj1}}
The following lemma allows us to extend $J_+$ to a two-sided Jacobi matrix $J$ acting on $\ell^2(\bbZ)$ in a way such that $\bc_k$ belong to the resolvent domain of $J$. 
\begin{lemma}\label{lemmarank1}
	Let $\mu$ be a compactly supported probability measure such that $\E=\esssupp\mu$ is a union of $g+1$ intervals. Let
	\[
	m_+(z)=\int_{-\infty}^\infty\frac{1}{x-z}d\mu(x)
	\]
	and $J_+$ the associated Jacobi matrix and let $\bc_k\in \bbR\setminus\E$ for $1\leq k\leq g$. Then there exists a two-sided Jacobi matrix $J$ with the following properties:
	\begin{enumerate}[(i)]
		\item $J_+=\Pi_+J\Pi_+$;
		\item there exists $\mathring{J}\in\cT(\E)$ so that  $J_-:=\Pi_-S{J}S^{-1}\Pi_-$  obeys $J_- =\Pi_-S\mathring{J}S^{-1}\Pi_-$; 
		\item\label{it:3Jan13} $\bc_k$ belong to the resolvent domain of $J$;
		\item\label{it:4Jan13} $\bc_k$ belong to the resolvent domain of $\tilde J_+ := \Pi_+SJS^{-1}\Pi_+$. 
	\end{enumerate}
\end{lemma}
\begin{proof}
	Let $J$ denote the extended two-sided matrix. Note that $J$ is defined by $J_+$, $a_0, a_{-1}, b_{-1}$ and $J_-$. We fix $J_+$ and $a_0,a_{-1}$ and choose $J_-$ and $b_{-1}$ appropriately. 
	
	By \eqref{eq:coeffStripping} we have 
	\begin{align*}
	\tilde m_+(z)=\frac{1}{b_{-1}-z-a_0^2m_+(z)}.
	\end{align*}
	Thus, $\bc_k$ is a pole of $\tilde m_+(z)$ if and only if it is a zero of $b_{-1}-z-a_0^2m_+(z)$. Choose $b_{-1}$ so that 
	\[
	b_{-1}-\bc_k-a_0^2m_+(\bc_k)\neq 0.
	\] 
	This already defines $\tilde J_+$.
	
	Let us write \eqref{eq:ResolventJacobi} at position $-1$ rather then at position $0$ and let $m_-$ be the resolvent function of $J_-$ and $\tilde m_+$ the resolvent function of $\tilde J_+$. Then we see that 
	\[
	-\frac{1}{R_{-2-2}(z)}=-\frac{1}{m_-(z)}+a_0^2\tilde m_+(z),\quad -\frac{1}{R_{-1,-1}(z)}=-\frac{1}{\tilde m_+(z)}+a_0^2m_-(z).
	\]
	If $\tilde m_+(\bc_k)\in \{0,\infty\}$, we choose $m_-$ so that $m_-(\bc_k)\notin \{0,\infty\}$ and if $\tilde m_+(\bc_k)\notin \{0,\infty\}$ we set $m_-(\bc_k)=0$. In both cases $R_{-2,-2}(\bc_k)\neq \infty$ and $R_{-1,-1}(\bc_k)\neq \infty$ and we obtain \eqref{it:3Jan13}.
\end{proof}
We will apply Lemma~\ref{lemmarank1} in the following way. First we choose $\bc_1,\dots \bc_g$ as the zeros of the Ahlfors function of $\overline{\bbC}\setminus\E$.  Let $\mu$ be a given Stahl-Totik regular measure and $\E=\esssupp\mu$. To this measure we construct $J$ as above. Let further be $\tilde\mu$ be the spectral measure of $\tilde J_+$. Clearly $\E=\esssupp\tilde \mu$ and from the characterization of regularity by existence of the limit and equality in \eqref{6aug2} it follows that also $\tilde\mu$ is regular. Due to \eqref{it:3Jan13} we can form orthogonal rational functions with respect to the periodic sequence $\bC=(\bc_1,\dots,\bc_g,\infty,\bc_1,\bc_2,\dots)$. On the other hand \eqref{it:4Jan13} allows us to associate to $J$ a two-sided GMP matrix in the sense of \cite{YuditskiiAdv}. In particular, $\tilde J_+$ satisfies the assumptions of Lemma \ref{lem:BlockJacobiRegular}.

It was noted in \cite[Section 2.2]{YuditskiiAdv} that
\begin{equation}\label{13aug2}
- \log \lvert \Psi(z) \rvert = \sum_{k=1}^{g+1} G_\E(z,\bc_k)
\end{equation}
and that the Yuditskii discriminant has the form \eqref{eq:discrRational} for some $\lambda_k > 0$ and $d\in \bbR$. Note that the constants $\lambda_k$ can be found by computing the residue of $\Delta_\E$ at the poles $\bc_k$. By using \eqref{13aug1} and \eqref{13aug2}, we find the residues to be the same constants $\lambda_k$ defined in a more general setting in \eqref{6aug5}.

\begin{proof}[Proof of Lemma \ref{lem:BlockJacobiRegular}]
	Denote by $\mu$ the canonical spectral measure for $J$. Note that
	\[
	\sigma_\ess(A) = \esssupp \mu = \E = \Delta_\E^{-1}([-2,2]).
	\]
	Since $\Delta_\E$ maps $\bbR \setminus \{\bc_1,\dots,\bc_{g}\}$ to $\bbR$ and is piecewise strictly monotone, by a spectral mapping theorem, this implies that for $\bJ = \Delta_\E(A)$, $\sigma_\ess(\bJ) = [-2,2]$.
	
	As noted in the introduction, regularity of the Jacobi matrix $J$ implies $\bC_\E$-regularity by Corollary~\ref{thmST1}, and this can be characterized in terms of GMP matrix coefficients by Theorem~\ref{thmRegularityGMP}. The GMP matrix structure together with \eqref{eq:discrRational} implies that $\bJ = \Delta_\E(A)$ is a type 3 block Jacobi matrix \eqref{16aug7}; the diagonal entries of the off-diagonal blocks $\fv_j$ are given by $\lambda_k \Lambda_{j(g+1)+k}$ for $k=0,\dots,g$, with the convention $\lambda_0 = \lambda_{g+1}$. Thus,
	\[
	\det \fv_j = \prod_{k=0}^{g} \lambda_k \Lambda_{j(g+1)+k}.
	\]
	By applying the criterion for regularity in Theorem~\ref{thmRegularityGMP} to the GMP matrix $A$ and to its resolvents $(\bc_k - A)^{-1}$, we conclude that $\bJ$ obeys \eqref{13aug5}. It follows that $\bJ$ is regular with $\sigma_\ess(\bJ) =[-2,2]$.
\end{proof}

If $\tilde A_+$ is such that $\sigma_{\ess}(\tilde A_+)=\E$ and the corresponding measure is regular on $\E$, then $\Delta_\E(\tilde A_+)$ is a block Jacobi matrix which due to Lemma \ref{lem:BlockJacobiRegular} is regular for $[-2,2]$. Therefore, if $\{\fv_\ell,\fw_\ell\}$ denote the block Jacobi coefficients of $\Delta_\E(\tilde A_+),$ by \cite[Theorem 3.1]{SimonJAT09} we have
\begin{align}\label{eq:BlockCesaroS}
\lim_{N\to\infty}\frac{1}{N}\sum_{\ell=1}^{N}\|\fv_\ell-I\|+\|\fw_\ell\|=0.
\end{align}
We note that since
$
C=\sup_{\ell}(\|\fv_\ell(A)-I\|+\|\fw_\ell\|)<\infty,
$
it follows from Cauchy-Schwarz and the AM-GM inequality that
\begin{align*}
\left(\frac{1}{N}\sum_{\ell=1}^{N}\|\fv_\ell-I\|+\|\fw_\ell\|\right)^2\leq \frac{2}{N}\sum_{\ell=1}^{N}\|\fv_\ell-I\|^2+\|\fw_\ell\|^2\leq 2C\frac{1}{N}\sum_{\ell=1}^{N}\|\fv_\ell-I\|+\|\fw_\ell\|
\end{align*}
and thus
\begin{align}\label{eq:CNDiscriminant}
\lim_{N\to\infty}\frac{1}{N}\sum_{\ell=1}^{N}\|\fw_\ell\|^2+\|\fv_{\ell}-I\|^2=0\iff \lim_{N\to\infty}\frac{1}{N}\sum_{\ell=1}^{N}\|\fw_\ell\|+\|\fv_{\ell}-I\|=0.
\end{align}
We will use this equivalence freely in the following.

In the setting of periodic Jacobi matrices and polynomial discriminants (i.e., $\Delta$ is a polynomial and $\{\fv_\ell,\fw_\ell\}$ are the coefficients of the block Jacobi matrix $\Delta(J_+)$) it is shown in \cite{DamKillipSimAnnals} that
\begin{align}\label{eq:convISTorus}
\sum_{\ell=1}^{\infty}\|\fw_\ell\|^2+\|\fv_{\ell}-I\|^2<\infty\iff\sum_{m=1}^\infty d( (S_+^*)^m J S_+^m, \cT_\E^+)^2 <\infty.
\end{align}
It was then stated in \cite{SimonJAT09} that since all the arguments in \cite{DamKillipSimAnnals} are local, in this setting \eqref{eq:CNDiscriminant} yields \eqref{24jun2}. Let us emphasize that finite gap sets whose isospectral torus consists of periodic Jacobi matrices are very special and the arguments in \cite{SimonJAT09} only apply to this setting. Yuditskii \cite{YuditskiiAdv} has extended the work of \cite{DamKillipSimAnnals} and one has the same localness, but since the construction is quite involved, we will provide the main ideas of proof. In this case, the condition on the right-hand side of \eqref{eq:convISTorus} is still the same, i.e., a condition for a Jacobi matrix $J_+$, but on the left-hand side $\{\fv_\ell,\fw_\ell\}$ are the coefficients of the block Jacobi matrix $\Delta_\E(A)$, where $A$ is an associated GMP matrix and $\Delta_\E$ is the rational function as defined in \eqref{eq:discrRational}.

We will start with the main ingredients of the proof that the left-hand side in \eqref{eq:convISTorus} implies the right-hand side and mention certain modifications to our setting. After this preparatory work will show how this can be applied to our setting.

We concluded from regularity that $\bJ=\Delta_\E(\tilde A_+)$ satisfies \eqref{eq:BlockCesaroS}. As may be seen in \cite{DamKillipSimAnnals}, and \cite{YuditskiiAdv}, it is convenient to rewrite this condition into a ``multiplicative form''. This leads to the notion of the Killip-Simon functional that we will define below. For a GMP matrix $A\in \GMP(\bC_\E)$, we define the functional as in \cite[Section 6]{YuditskiiAdv} by
\begin{align}\label{eq:KSfunctional}
H_+(A)=\sum_{\ell=0}^\infty h(\fv_\ell,\fw_\ell,\fv_{\ell+1}),
\end{align}
where
\[
h(\fv_\ell,\fw_\ell,\fv_{\ell+1})=\frac{1}{2}\tr(\fv_\ell^*\fv_\ell+\fw_\ell^2+\fv_{\ell+1}\fv_{\ell+1}^*)-(g+1)-\log\det \fv_\ell\fv_{\ell+1}.
\]
For a square matrix $X$ its modulus is defined by $|X|:=\sqrt{X^*X}$. Moreover, define $G(|X|)=|X|^2-I-\log |X|^2$. Then we have
\[
2h(\fv_\ell,\fw_\ell,\fv_{\ell+1})=\tr\left( \fw_\ell^2+G(|\fv_\ell|)+G(|\fv_{\ell+1}^*|)\right).
\]
In particular, it follows from $|\fv_{\ell}|,|\fv_{\ell+1}|>0$ that $h(\fv_\ell,\fw_\ell,\fv_{\ell+1})>0$. 
In fact even more is true. There exists $\tilde C>1$ so that if $\|\fv_\ell-I\|<\frac{1}{2}$ then by \cite[Proposition 11.12]{DamKillipSimAnnals}
\[
\frac{1}{\tilde C}\|\fv_\ell-I\|\leq \||\fv_\ell|-I\|\leq \tilde C\|\fv_\ell-I\|.
\]
Thus, if $\tilde C\|\fv_\ell-I\|<\frac{1}{2}$  we conlude that $\||\fv_\ell|-I\|<\frac{1}{2}$ and thus the eigenvalues of $|\fv_\ell|$ are greater than $\frac{1}{2}$. Under this assumption (for $\ell$ and $\ell+1$) it is shown in \cite[Theorem 11.13]{DamKillipSimAnnals} that there exists a constant $C$ so that 
\begin{align}\label{eq:GHilbertEstimate}
\frac{1}{C}h(\fv_\ell,\fw_\ell,\fv_{\ell+1})\leq\left( \|\fv_\ell-I\|^2+\|\fw_\ell\|^2+\|\fv_{\ell+1}-I\|^2\right)\leq Ch(\fv_\ell,\fw_\ell,\fv_{\ell+1}).
\end{align}
A key observation is that the functional $H_+(A)$ is related to the shift action of $S^{g+1}$ on the GMP matrix $A$. But finally we want to conclude something about
$$\mathcal S J=S^* J S,$$
i.e., the shift action on $J$. This is another motivation of the Jacobi flow as defined above. 

The following key lemma, which follows essentially from \eqref{eq:JacobiFlow}, allows for the computation of the ``derivative'' in the Jacobi flow direction, and is essential in order to extract from the finiteness of $H_+(A)$ properties of the associated Jacobi matrix $J$.
\begin{lemma}{\cite[Lemma 6.1]{YuditskiiAdv}}\label{lem:JacobiFlowFunctional}
	Let $v_{jk}^{(\ell)},w_{jk}^{(\ell)}$ denote the matrix entries of $\fv_\ell,\fw_\ell$ and 
	\begin{align*}
	\delta_JH_+(A)=\frac{1}{2}\langle\Delta_\E(\cJ A)e_{-1},\Delta_\E(\cJ A)e_{-1}\rangle-1-\log(\cJ v)_{g,g}^{(-1)}(\cJ v)_{g,g}^{(0)}.
	\end{align*}
	Then
	\begin{align}\label{eq:JacobiDerivative}
	H_+(A)=H_+(\cJ A)+\delta_JH_+(A).
	\end{align}
\end{lemma}
\begin{proof}
	Using \eqref{eq:JacobiFlow} the proof is based on the realization that due to the diagonal structure of $U_A$, conjugating $A$ by $U_A$ does not affect $H_+$. Thus, $\delta_JH_+(A)$ corrects for the term which is omitted in $H_+(\cJ A)$ due to the shift. 
\end{proof}

For later reference let us mention that due to \eqref{def:Otransform} we can also relate $H_+(A)$ and $H_+(\tilde OA)$. Moreover, it is easy to see that we can also relate $H_+(A)$ and $H_+(S^{-(g+1)}AS^{(g+1)})$ explicitly. Moreover, this lemma allows to obtain from finiteness of $H_+(A)$, $\ell^2$ conditions for the coefficients of $\cJ^{\circ m}(A)$. We sketch the idea in the following. Let us define
	\begin{align*}
	\tilde H_+(A)=\sum_{m=0}^\infty\delta_JH_+(A(m)), \quad \text{ where } A(m)=\cJ^{\circ m}(A).
	\end{align*}
	Since all terms are positive, iterating \eqref{eq:JacobiDerivative} yields
	\begin{align*}
	\tilde H_+(A)\leq H_+(A).
	\end{align*}
	In particular, $H_+(A)<\infty$ implies $\tilde H_+(A)<\infty$. The vector $\Delta_\E(A(m))e_{-1}$ has only $2g+3$ non-vanishing entries which are entries of the last columns of $\fv_{-1}(m),\fw_{-1}(m)$ and $\fv_0(m)$. Let us denote this $2g+3$-dimensional vector by $x(m)$ and note that the first and the last component are the positive entries $x_0(m)=(v(m))_{g,g}^{(-1)}$ and $x_{2g+2}(m)=( v(m))_{g,g}^{(0)}$. With this notation we have 
	\begin{align}\label{eq:deltaJ}
	\delta_JH_+(A(m))=\frac{1}{2}\bigg(G(x_0(m))+G(x_{2g+2}(m))+\sum_{j=1}^{2g+1}x_j(m)^2\bigg).
	\end{align}
	Thus, $\tilde H_+(A)<\infty$ implies already $\ell^2$-conditions for the vector $x(m)$. This is used to conclude from $\tilde H_+(A)<\infty$ that $A(m)$ is $\ell^2$-close to be periodic and that the periodic operator is  $\ell^2$-close to $\mathcal I\mathcal S_\E$. That is, if $\{\vp_j(m),\vq_j(m)\}_{m\in\bbN_0}$ denote the GMP parameters of $A(m)$, then \cite[Theorem 1.20]{YuditskiiAdv}
\begin{equation}
\begin{aligned}\label{eq:theorem120}
\{\vbp_0(m)-\vbp_{-1}(m)\}_{m\in\bbN_0}\in\ell^2(\N_0,\bbR^{2(g+1)}),\\
\{\bF_\E(\vbp_0(m))\}_{m\in\bbN_0}\in\ell^2(\N_0,\bbR^{g+2}).
\end{aligned}
\end{equation}
To show how one obtains from  \eqref{eq:theorem120} convergence of $(S_+^{*})^mJS_+^m$ to $\cT_\E^+$ in the sense of \eqref{eq:convISTorus}, we need one more ingredient: it is well known that there are continuous functions, $\cA, \cB$, on $\bbR^g/\bbZ^g$, which can be expressed explicitly in terms of the Riemann theta function associated to $\E$\footnote{To be precise it is the Riemann theta function of the Riemann surface of the function $\sqrt{\prod_{k=0}^g(z-\ba_k)(z-\bb_k)}$, where ${\ba_k,\bb_k}$ denote the gap edges of $\E$ cf. \eqref{finitegapset}} \cite[Theorem 9.4.]{TeschlJacoobi}, and a fixed element $\chi\in \bbR^g/\bbZ^g$, such that
\begin{align}\label{eq:IsoParametrization}
\cT_\E=\{J(\a): \quad \a\in\bbR^g/\bbZ^g \}
\end{align}
and $J(\a)$ is the Jacobi matrix built from the coefficients
\begin{align}\label{eq:parameter}
a_m(\a)=\cA(\a-m\chi),\quad b_m(\a)=\cB(\a-m\chi).
\end{align}
Recall that by the definition of the Jacobi flow, if $J$ is the Jacobi matrix associated to $A$, then $S^{-m}JS^m$ is the Jacobi matrix associated to $A(m)$. Since every point of $\cI\cS_\E=\bF_\E^{-1}(0)$ is regular for $\bF_\E$, by \cite[Lemma 11.3]{DamKillipSimAnnals} there exists a constant $C>0$, such that  
	\begin{align}\label{eq:openGap}
	\dist(\vbp,\cI\cS_\E)\leq C\|\bF_\E(\vbp)\|,
	\end{align}
	where $\vbp$ are chosen from a fixed compact neighborhood of $\cI\cS_\E$, see also \cite[page 755]{YuditskiiAdv}.
Taking an element $\mathring A_m\in \cT_\E(\bC_\E)$ so that 
\[
\dist(\vbp(n),\cI\cS_\E)=\dist(A(\vbp(n)),\mathring A_m),
\]
one can conclude from \eqref{eq:theorem120} and \eqref{eq:openGap} that 
\begin{align}\label{eq:distance}
\sum_{n\geq 0}\dist(A(\vbp(n)),\mathring A_m)^2<\infty.
\end{align}
Letting $J(\a_m)\in\cT_\E$ be the Jacobi matrix with $F(\mathring A_m)=J(\a_m)$, then \eqref{eq:abCoeff} implies that 
\[
a(m)^2-\cA(\a_m)\in\ell^2_+,\quad b(m)-\cB( \a_m)\in\ell^2_+.
\]
Using in addition the smoothness of the Jacobi flow, one can show that 
\[
\a_n=\sum_{j=1}^n\vare^\a_m-m\chi,\quad \vare^\a_m\in\ell^2(\bbN_0,\bbR^{g}).
\]
This is even stronger than \eqref{eq:convISTorus}; cf. \cite[Lemma 7.2]{YuditskiiAdv}.

Before we start with our construction, we have to mention a certain technical issue. If $\{f_m\}$ is a sequence,  then clearly $\{f_m-1\}\in \ell^2$ implies 
	\[
	\liminf_{m\to\infty} f_m>0.
	\]
	If $|f_m-1|$ is only Ces\'aro summable, then this is not necessarily the case. However, for any $\d>0$ the set with $f_m<\d$ will be sparse in the following sense.
	Let us introduce the notation $\{f_m\}\in\CS$ for sequences $\{f_m\}$ satisfying
	\begin{align*}
	\lim\limits_{N\to\infty}\frac{1}{N}\sum_{m=1}^N|f_m|= 0,
	\end{align*}
	and we call a set $T \subset \bbN$ sparse if
	\[
	\lim_{N\to\infty} \frac{ \lvert  T \cap \{ 1, 2,\dots ,N\}  \rvert }N = 0.
	\]
	An elementary observation, which will be used repeatedly, is that for $f \in \CS$, the set $\{ m \in \bbN \mid \lvert f_m \rvert \ge \delta \}$ is sparse for any $\delta > 0$. This follows immediately from Markov's inequality.

We have already concluded from regularity that one and hence both of the conditions in \eqref{eq:CNDiscriminant} hold. Due to the phenomena described above and the $\log$ in the definition of $h(\fv_\ell,\fw_\ell,\fv_{\ell+1})$ it is not immediately clear that \eqref{eq:CNDiscriminant} also implies 
	\begin{align}\label{eq:KSfunction}
	\lim_{N\to\infty}\frac{1}{N}\sum_{\ell=0}^Nh(\fv_\ell,\fw_\ell,\fv_{\ell+1})=0.
	\end{align}
	However, using in addition once again regularity, we can show \eqref{eq:KSfunction}.
\begin{lemma}
	Let a Jacobi matrix satisfy the conditions of Lemma \ref{lem:BlockJacobiRegular} and $\{\fv_\ell,\fw_\ell\}$ denote the coefficients of the associated block Jacobi matrix $\bJ=\Delta_\E(A)$. Then \eqref{eq:KSfunction} holds.
\end{lemma}
\begin{proof}
	Recall that 
	\[
	h(\fv_\ell,\fw_\ell,\fv_{\ell+1})=\frac{1}{2}\tr\left((|\fv_{\ell}|^2-I)+(|\fv_{\ell+1}^*|^2-I)+\fw_\ell^2\right)-\log\det\fv_{\ell}\fv_{\ell+1}.
	\]
	Regularity allows us to consider the terms in $h(\fv_\ell,\fw_\ell,\fv_{\ell+1})$ separately. It follows directly from \eqref{eq:CNDiscriminant} that 
	\[
	\lim_{N\to \infty}\frac{1}{N}\sum_{\ell=0}^N\tr\fw_\ell^2=0.
	\]
	Moreover, \eqref{13aug5} implies that 
	\[
	\lim_{N\to \infty}\frac{1}{N}\sum_{\ell=0}^N\log\det\fv_\ell=0.
	\]
	Thus it remains to show that
	\begin{align}\label{eq:3Jan20223}
	\lim_{N\to \infty}\frac{1}{N}\sum_{\ell=0}^N\tr(|\fv_\ell|^2-I)=0,\quad \lim_{N\to \infty}\frac{1}{N}\sum_{\ell=0}^N\tr(|\fv_\ell^*|^2-I)=0.
	\end{align}
	For a matrix $A\in \Mat(n,\bbR)$, let $\sigma_i(A)$ denote its singular values and note that $\tr |A|=\sum \sigma_i(A)$. For $A,B\in \Mat(n,\bbR)$ we will need the following inequalities
	\begin{align}\nonumber
	|\tr A|&\leq \tr |A|,\\
	\sum_{i=1}^n\sigma_i(AB)&\leq \sum_{i=1}^n\sigma_i(A)\sigma_i(B),\label{eq:3Jan2022}
	\end{align}
	which can be found for instance in  \cite[eq. (3.3.35) and Theorem 3.3.14]{HornJohnMatrix}.
	Thus we have 
	\[
	\left|\frac{1}{N}\sum_{\ell=0}^N\tr(|\fv_\ell|^2-I)\right|\leq \frac{1}{N}\sum_{\ell=0}^N\tr(\left||\fv_\ell|^2-I\right|)=\frac{1}{N}\sum_{\ell=0}^N\tr(\left|(|\fv_\ell|-I)(|\fv_\ell|+I)\right|).
	\]
	Using \eqref{eq:3Jan2022} and a uniform bound on $\sigma_i(|\fv_\ell|+I)$ we get
	\begin{align*}
	\tr(\left|(|\fv_\ell|-I)(|\fv_\ell|+I)\right|)=\sum_{j=0}^g\sigma_j((|\fv_\ell|-I)(|\fv_\ell|+I))\leq C\sum_{j=0}^g\sigma_j(|\fv_\ell|-I),
	\end{align*}
	where $C$ does not depend on $\ell$.
	The last sum is the trace norm for $|\fv_\ell|-I$ and thus by the equivalence of norms on $\Mat(n,\bbR)$ we find $C_2$ so that 
	\[
	\left|\frac{1}{N}\sum_{\ell=0}^N\tr(|\fv_\ell|^2-I)\right|\leq C_2\frac{1}{N}\sum_{\ell=0}^N\||\fv_\ell|-I\|.
	\]
	Define the set
	\begin{align*}
	I_N=\left\{\ell:\ \tilde C\|\fv_\ell-I\|>\frac{1}{2} \right\}\cap[1,N]
	\end{align*}
	and note that \eqref{eq:BlockCesaroS} implies  
	\begin{align}\label{eq:3Jan20222}
	\lim\limits_{N\to\infty}\frac{|I_N|}{N}=0.
	\end{align}
	It follows as in \cite[Proposition 11.12]{DamKillipSimAnnals} that for $\ell\notin I_N$, there exists  a constant $C_3$ so that 
	\[
	\||\fv_\ell|-I\|\leq C_3\|\fv_\ell-I\|.
	\]
	For $\ell \in I_N$ we can estimate $\||\fv_\ell|-I\|$ uniformly and using \eqref{eq:3Jan20222} and \eqref{eq:CNDiscriminant} we obtain \eqref{eq:3Jan20223}. The proof for $\fv_\ell^*$ works the same by using \cite[Lemma 4.6.5.]{SimonSzego} instead of \cite[Proposition 11.12]{DamKillipSimAnnals}. This finishes the proof.
\end{proof}

We are now ready to adapt Yuditskii's construction \cite{YuditskiiAdv} to our setting. Let $\mu$ be a regular measure with $\esssupp \mu=\E$ and let $J_+$ be the associated Jacobi matrix. As already described after Lemma  \ref{lemmarank1}, we find $J$ and $\tilde J_+$ such that all $\bc_k\in\bC_E$ belong to the resolvent set of  $\tilde J_+$ and $J$ and $\tilde J_+$  is also regular. Let $\tilde A_+$ and $A$ denote the GMP matrix associated to $\tilde J_+$ and $J$ respectively and $\{\fv_\ell,\fw_\ell\}$ denote the block Jacobi coefficients of $\Delta_\E(A)$. Let us further truncate $A$ after $N$ positive blocks before $\infty$ (i.e. before the position $-1+N(g+1))$ and extend it by some element $\mathring{ A}\in\cT_\E(\bC_\E)$ so that $\bc_k\notin \sigma (A_N)$. To be precise, we first truncate $A$ and consider its resolvent function $a_0^2r_-$, then we can extend it as in Lemma \ref{lemmarank1} by some reflectionless $r_+$ so that all $\bc_k\in \bC_E$ belong to the resolvent set of the associated Jacobi matrix and then we consider the associated GMP matrix by \cite[Proposition 5.5]{YuditskiiAdv}. Since elements from the isospectral torus satisfy the magic formula and computing resolvents is a purely local process, we would like to conclude from the compactness of $\cT_{\E}(\bC_\E)$  that
\[
H_+(A_N)=\sum_{\ell=1}^{N}h(\fv_{\ell-1},\fw_\ell,\fv_\ell)+O(1)
\]
where $A_N$ denotes the truncation described above.
However, due to the log-term in the definition of $h(\fv_{\ell-1},\fw_\ell,\fv_\ell)$ one must be careful. At the place where we modify $A$ by extending it by $\mathring A$, by formula \eqref{eq:Resolvent}, when computing $\L_{n}$, in a certain range of $n$ given precisely below, one mixes coefficients from $A$ and $\mathring{ A}$. Thus we need to argue that 
\[
-\log\L_{n} 
\]
does not grow too fast so that we can still conclude that 
\begin{align}\label{eq:CSAN}
\lim\limits_{N\to\infty}\frac{1}{N}H_+(A_N)=0.
\end{align}
However, looking at the formula \eqref{eq:Resolvent} and the definition of the Blaschke-Potapov factors, if all the coefficients can be bounded uniformly, we see that if $p_g^{(j)}>\d$ we find a constant $C$ only depending on the bounds of the coefficients and of $\d$ so that 
\begin{align}\label{eq:Jan21}
\L_n(A_N)\geq C.
\end{align}
Note now that 
\[
\L_{-1+(N-1)(g+1)}(A_N)=p_g^{(N-1)},
\]
which is still a coefficient of $A$. But 
\[
\L_{-1+N(g+1)}(A_N)=:\mathring p_g
\]
is already a coefficient from $\mathring{ A}$. The mixing of coefficients of $A$ and $\mathring{ A}$ in computing $\L_n(A_N)$ happens for $-1+(N-1)(g+1)<n<-1+N(g+1)$. But in this case the only value that can make $\L_n(A_N)$  small is $\mathring p_g$, and for elements of the isospectral torus we know that 
\[
\mathring p_g=\frac{1}{\l_{g+1}}
\]
and thus we can conclude \eqref{eq:Jan21} and therefore \eqref{eq:CSAN}. 

Together with $\tilde H_+(A_N)\leq H_+(A_N)$, we conclude that 
\begin{align}\label{eq:tildeHplusAN}
\lim\limits_{N\to\infty}\frac{1}{N}\tilde H_+(A_N)=0.
\end{align}

Realizing that all the arguments in \cite[Theorem 1.20]{YuditskiiAdv} are local, using $2N$ blocks of $A$, we can obtain a local version of  this theorem.
\begin{proposition}\label{prop:thm120}
	Let $J$ be constructed as above and $A$ be the associated GMP matrix. Then, there exists an $N$ independent constant C and a sparse set $I_N$ such that
	\begin{equation}
	\begin{aligned}
	\sum_{m=1}^N\|\vbp_0(m)-\vbp_{-1}(m)\|^2\leq C(\tilde H_+(A_{2N})+|I_N|),\\
	\sum_{m=1}^N\|\bF_\E(\vbp_0(m))\|^2\leq C(\tilde H_+(A_{2N})+|I_N|)\label{eq:Jan14}.
	\end{aligned}
	\end{equation}
\end{proposition}
We will need a more quantitative version of \cite[Lemma 6.6]{YuditskiiAdv}:

\begin{lemma}\label{lem:ell2Matrix}
	Let $\psi_n$, $\tilde \psi_n$, $\tau_n$ and $\tilde \tau_n$ be given sequences and assume that there exists $\eta>0$ such that 
	\begin{equation}\label{eqsubl2}
	\cos\psi_n\ge\eta, \ \cos\tilde \psi_n\ge\eta, \ 0\leq\tau_n\leq \frac 1\eta, \ 0\leq\tilde \tau_n\leq \frac 1\eta.
	\end{equation}
	Define
	\begin{equation}\label{eqsub2}
	\alpha_n:=\begin{bmatrix}
	\tau_{n}&0\\
	0
	&1
	\end{bmatrix}
	\begin{bmatrix}
	\sin\psi_{n}&\cos\psi_{n}\\
	\cos\psi_{n}
	&-\sin\psi_{n}
	\end{bmatrix}-
	\begin{bmatrix}
	\sin\tilde\psi_{n}&\cos\tilde\psi_{n}\\
	\cos\tilde\psi_{n}
	&-\sin\tilde \psi_{n}
	\end{bmatrix}\begin{bmatrix}
	1&0\\0
	&\tilde\tau_{n}
	\end{bmatrix}.
	\end{equation}
	Then, there exists $C$ depending only on $\eta$ so that 
	\[
	\|\{\cos\psi_n-\cos\tilde\psi_n\}\|_{\ell^2(\bbN,\bbC)}\leq C\|\{\a_n\}\|_{\ell^2(\bbN,\bbC)^{2\times 2}},\quad \|\{\sin\psi_n-\sin\tilde\psi_n\}\|_{\ell^2(\bbN,\bbC)}\leq C\|\{\a_n\}\|_{\ell^2(\bbN,\bbC)^{2\times 2}}
	\]
\end{lemma}
\begin{proof}
	If $\|\{\a_n\}\|_{\ell^2(\bbN,\bbC)^{2\times 2}}=\infty$ the claim is trivial. If it is finite, set $S:=\|\{\a_n\}\|_{\ell^2(\bbN,\bbC)^{2\times 2}}$.
	The constant $C>0$ may increase throughout the proof. Directly from \eqref{eqsub2} we have 
	$$
	\|\{\cos\psi_n-\cos\tilde\psi_n\}\|_{\ell^2}\leq S\quad\text{ and }\quad
	\|\{\tau_n\cos\psi_n-\tilde \tau_n\cos\tilde\psi_n\}\|_{\ell^2}\leq S.
	$$
	Since
	\[
	\tau_n\cos\psi_n-\tilde\tau_n\cos\tilde\psi_n-\tilde\tau_n(\cos\psi_n-\cos\tilde\psi_n)=(\tau_n-\tilde\tau_n)\cos\psi_n,
	\]
	using   $\tilde \tau_n\leq\frac{1}{\eta}$ and $\cos\psi_n\geq \eta$ we find $C>0$ so that 
	\[
	\|\{\tau_n-\tilde\tau_n\}\|_{\ell^2}\leq CS.
	\]  
	Now, we have another two conditions
	$$
	\|\{\tau_n\sin\psi_n-\sin\tilde\psi_n\}\|_{\ell^2}\leq S\quad \text{and}\quad  
	\|\{\sin\psi_n-\tilde \tau_n\sin\tilde\psi_n\}\|_{\ell^2}\leq S.
	$$
	Using
	$$
	\sin\psi_n-\tilde \tau_n\sin\tilde\psi_n=\sin{\psi_n}-\tau_n\tilde \tau_n\sin\psi_n-\tilde\tau_n(\sin\tilde\psi_n-\tau_n\sin\psi_n)
	$$
	and $\|\{\tau_n\sin\psi_n-\sin\tilde\psi_n\}\|\leq S$ and $\tilde \tau_n\leq\frac{1}{\eta}$ we conclude that 
	\[
	\|\{\sin{\psi_n}(1-\tau_n\tilde \tau_n)\}\|_{\ell^2}\leq CS.
	\]
	Now we have
	\[
	1-\tau_n^2=1-\tau_n\tilde \tau_n+\tau_n(\tilde\tau_n-\tau_n)
	\]
	and since $|\sin{\psi_n}|\leq 1$ and $|\tau_n|\leq \frac{1}{\eta}$, we conclude 
	$$
	\|\{(\tau_n^2-1)\sin\psi_n\}\|\leq CS.
	$$
	Again by, $|\tau_n|\leq \frac{1}{\eta}$ we also get a bound for  $\{(\tau_n-1)\sin\psi_n\}$. Finally, since 
	$$
	\sin\psi_n-\sin\tilde\psi_n=\tau_n\sin\psi_n-\sin\tilde\psi_n-(\tau_n-1)\sin\psi_n,
	$$
	we obtain the also the estimate for $\{\sin\psi_n-\sin\tilde\psi_n\}$.
\end{proof}

\begin{proof}[Proof of Proposition \ref{prop:thm120}]
		In the proof we will find constants $C>0$ and sparse sets $I_N$. These quantities will change throughout the proof. Note that the union of sparse sets is clearly sparse. 
		First we mention an important locality property of the Jacobi flow. In the following we will derive estimates for entries of $A_{2N}(m)$ in the block $0$ and $-1$. Due to the locality property of the Jacobi flow, for $0<m\leq 2N-1$, the coefficients of $A_{2N}(m)$ and $A(m)$ coincide; this is nicely visualized in the diagram \cite[eq. (4.12) ]{YuditskiiAdv}. Similarly, we have already mentioned that computing entries of the resolvents, due to the band structure, can also be done locally. Thus, our estimates will be derived for the coefficients of $A_{2N}(m)$, but by restricting it to $0<m\leq N$ they agree with the coefficients associated to $A$. For this reason we will also notationally not distinguish between the coefficients of $A$ and the ones of $A_{2N}$.

		By the explanation following Lemma \ref{lem:JacobiFlowFunctional} and \eqref{eq:tildeHplusAN}, we conclude that 	
		\[
		\lim\limits_{N\to\infty}\frac{1}{N}\sum_{m=1}^N\bigg(G(x_0(m))+G(x_{2g+2}(m))+\sum_{j=1}^{2g+1}x_j(m)^2\bigg)=0.
		\]
		Notice that $G$ obeys
		\[
		c_\e^{-1} (x-1)^2\leq G(x)\leq c_\e(x-1)^2,\quad \forall x\in(\e,\e^{-1}).
		\]
		
		 Thus, we find a sparse set $I_N$ and $C>0$ so that 
		\begin{align}\label{eq:13Jan5}
		\begin{split}
		\sum_{m=1}^N&\bigg((x_0(m)-1)^2+(x_{2g+2}(m)-1)^2+\sum_{j=1}^{2g+1}x_j(m)^2\bigg)\\
		&\leq C\bigg(\sum_{m=1}^N\bigg(G(x_0(m))+G(x_{2g+2}(m))+\sum_{j=1}^{2g+1}x_j(m)^2)\bigg)+|I_N|)\bigg).
	\end{split}
		\end{align}
		Thus, for $1\leq j\leq 2g+1$,
		\[
		\|\{x_j(m)\}_{m=1}^N\|_2\leq C\left(\tilde H_+(A_N)+|I_N|\right)
		\]
		and
		\begin{align*}
		\|\{x_0(m)-1\}_{m=1}^N\|_2&\leq C\left(\tilde H_+(A_N)+|I_N|\right),\\ \|\{x_{2g+2}(m)-1\}_{m=1}^N\|_2&\leq C\left(\tilde H_+(A_N)+|I_N|\right).
		\end{align*}
		We note that 
		\[
		x_{2g+2}(m)=\l_0\L_{-1}(m)=\l_0p_g^{(0)}(m),
		\]
		and thus 
		\begin{align}\label{eq:13Jan2}
		\frac{1}{N}\sum_{m=1}^N(\l_0p_g^{(0)}(m)-1)^2=0.
		\end{align}
		This is one component of $F_\E$. 
		
		Let us now show the first inequality in \eqref{eq:Jan14}. Denote $\hat A=\cO A$, where $\cO A$ is the transform defined in \eqref{def:Otransform}. We use the hat  for all entries related to $\hat A$ and $\Delta_\E(\hat A)$, respectively. The entries of $A(m)$ are denoted by $\{p_k^{(j)}(m),q_k^{(j)}(m)\}$. Recall that $m$ corresponds to application of the Jacobi flow, $j$ denotes the block and $k$ the component of the vector $\vp_j(m)$. We use similar notation for $\hat A$, $\Delta_\E(A)$ and $\Delta_\E(\hat A)$. Due to the definition \eqref{def:Otransform}, we find
		\begin{equation}\label{eqar}
		\begin{bmatrix}
		v^{(0)}_{g-1,g-1}(m)& 0\\
		v^{(0)}_{g,g-1}(m)& \lambda_0 p^{(0)}_g(m)
		\end{bmatrix}
		\bo(\phi_{g}^{(0)}(m))
		=
		\bo(\phi_{g}^{(-1)}(m))\begin{bmatrix}
		\lambda_0\hat p^{(0)}_g(m)
		& 0\\
		\hat w^{(0)}_{0,g}(m)& \hat v^{(1)}_{0,0}(m)
		\end{bmatrix}.
		\end{equation}
		Note that $v^{(0)}_{g,g-1}(m)=x_{2g+1}(m)$. It was mentioned after Lemma \ref{lem:JacobiFlowFunctional} that $H_+(\hat A)$ can be expressed in terms of  $H_+(A)$. Therefore, we conclude by \eqref{eq:commJacandO} that \eqref{eq:13Jan2} also holds for $\hat p_g^{(0)}(m)$. Note that $x_{g+2}(m)= \hat w^{(-1)}_{0,g}(m)$. Since shifting by a full block in the very beginning only adds a fixed constant, and $\cJ$ commutes with this shift by \eqref{eq:commJacandShift}, we can apply Lemma \ref{lem:ell2Matrix} to \eqref{eqar} and obtain by \eqref{eq:13Jan5} that
		\[
		\|\{\sin\phi_g^{(-1)}(m)-\sin\phi_g^{(0)}(m)\}_{m=1}^N\|_{\ell^2}\leq C(\tilde H_+(A_N)+|I_N|).
		\]
		Thus, by \eqref{eq:sinDef}
		\begin{align}\label{eq:13Jan6}
		\|\{p_{g-1}^{(-1)}-p_{g-1}^{(0)}\}_{m=1}^N\|_{\ell^2}\leq C(\tilde H_+(A_N)+|I_N|).
		\end{align}
		Since by \cite[eq (4.2)]{YuditskiiAdv} one can pass from  $j$ to $j-1$ by using $\hat A$, we obtain  \eqref{eq:13Jan6} for $0\leq j\leq g$. Similarly, by \cite[eq (4.2)]{YuditskiiAdv}, one obtains the estimates for the $q_j$-coefficients. This finishes the proof of the first  inequality in \eqref{eq:Jan14}.
		
		It remains to prove \eqref{eq:Jan14} for the other components of $F_\E$. The proof of Lemma \ref{lem:ell2Matrix} yields an estimate for 
		$
		\|\{(v^{(-1)}_{g-1,g-1}(n)-1)\sin\phi^{(-1)}_g(n)\}\|_2
		$
		or, equivalently, it shows 
		\begin{equation*}\label{lambdal2}
		\|\{(\Lambda_{-2}(m)\lambda_g-1) p^{(-1)}_{g-1}(m)\}\|\leq  C(\tilde H_+(A_N)+|I_N|).
		\end{equation*}
		Since $p^{(-1)}_{g-1}(m)$ may approach to zero, it does not imply yet give an estimate for $\{(\Lambda_{-2}(m)\lambda_g-1)\}$. If we can also estimate 
		\begin{equation*}\label{lambdal21}
		\|\{(\Lambda_{-2}(m)\lambda_g-1) q^{(-1)}_{g-1}(m)\}_{m=1}^N\|,
		\end{equation*}
		then  $\inf_{m}\left((q^{(-1)}_{g-1}(m))^2+(p^{(-1)}_{g-1}(m))^2\right)>0$ yields 
		\[
		\|\{(\Lambda_{-2}(m)\lambda_g-1)\}_{m=1}^N\|\leq  C(\tilde H_+(A_N)+|I_N|).
		\]
		To this end, we note that 
		\begin{equation}\label{lambdal22}
		\Lambda_{-2}(m+1)=\frac{\cos\phi^{(-1)}_g(m)}{\cos\phi^{(-2)}_g(m)}\Lambda_{-2}(m).
		\end{equation}
		Indeed, by definition
		of the Jacobi flow
		$$
		U(\vp_{-2}(m))\begin{bmatrix} v^{(-2)}_{g,g}& & & \\
		*&v^{(-1)}_{0,0}& &\\
		*&* & \ddots &\\
		*&* &* &v^{(-1)}_{g-1,g-1}
		\end{bmatrix}(m+1)=\fv_{-1}(m) U(\vp_{-1}(m))
		$$
		the second from below entry in the last column in this matrix identity means exactly \eqref{lambdal22}. Since by the above, we can estimate $\|\{\cos\phi^{(-1)}_g(m)-\cos\phi^{(-2)}_g(m)\}_{m=1}^N\|_{\ell^2}$ we obtain 
		\[
		\|\{\Lambda_{-2}(m+1)-\Lambda_{-2}(m)\}_{m=1}^N\|_{\ell^2}\leq C(\tilde H_+(A_N)+|I_N|).
		\]
		Now by \cite[(4.10)]{YuditskiiAdv} we have
		\[
		p_{g-1}^{(-1)}(m)=-q_{g-1}^{(-1)}(m+1)f(m),
		\]
		where $f(m)$ is an explicite function that can be small only on a sparse set. 
		Combining this with 
		\[
		(\Lambda_{-2}(m)\lambda_g-1)p_{g-1}^{(-1)}(m)=-(\Lambda_{-2}(m)\lambda_g-1)q_{g-1}^{(-1)}(m+1)f(m)
		\]
		we also get an estimate for $\|\{(\Lambda_{-2}(m)\lambda_g-1)q_{g-1}^{(-1)}(m)\}_{m=1}^N\|_{\ell^2}$, which shows 
		\[
		\|\{(\Lambda_{-2}(m)\lambda_g-1) \}\|\leq  C(\tilde H_+(A_N)+|I_N|).
		\]
		The same arguments with respect to $\cO^k A$, $k=1,...,g-1$, in a combination with \eqref{eq:commJacandO}, yield the estimates for all other components of $F_\E$.
	\end{proof}

\begin{lemma}
	There exists $\{\e^\a_m\}\in\CS(\bbN,\bbR^g/\bbZ^g)$ and  $\{\e^a_m\}\in\CS(\bbN,\bbR), \{\e^b_n\}\in\CS(\bbN,\bbR)$ so that 
	\begin{align*}
	a_m^2=\cA\bigg(\sum_{j=1}^m\vare^\a_j-m\chi\bigg)+\vare^a_m,\\
	b_m=\cB\bigg(\sum_{j=1}^m\vare^\a_j-m\chi\bigg)+\vare^b_m,
	\end{align*}
	where $\cA,\cB$ are given in \eqref{eq:parameter}.
\end{lemma}
\begin{proof}
	Let $A(\vbp_0(m))$ be the periodic GMP matrix with coefficients $\vbp_0(m)$ and  $A(\a_m)\in\cT_\E(\bC_E)$, so that 
	\[
	\dist(\vbp_0(m),\cI\cS_\E)=\dist(\vbp_0(m),\mathring{\vbp}(\a_m)).
	\]
	Thus, using \eqref{eq:openGap} we obtain
	\[
	\sum_{m=1}^N\dist(\vbp_0(m),\mathring{\vbp}(\a_m))^2\leq C(\tilde H_+(A_N)+|I_N|)
	\]
	and by  \eqref{eq:abCoeff} we get
	\begin{align*}
	\sum_{m=1}^N(a_m^2-\cA(\a_m))^2&\leq C(\tilde H_+(A_N)+|I_N|),\\
	\sum_{m=1}^N(b_m- \cB(\a_m))^2&\leq C(\tilde H_+(A_N)+|I_N|),
	\end{align*}
	where again $\{a_m,b_m\}_{m\in\bbN_0}$ denote the coefficients of $J_+$.
	Thus, dividing by $N$ and sending $N\to\infty$, we obtain by \eqref{eq:3Jan20222}  and \eqref{eq:tildeHplusAN} that
	\begin{equation}\label{17aug3}
	\{a_m^2-\cA(\a_m)\}_{m\in\bbN_0},\{b_m-\cB(\a_m)\}_{m\in\bbN_0} \in\CS.
	\end{equation}
	The smoothness of the Jacobi flow transform, provided that $p_g^{(0)},p_g^{(1)}>\d$, allows for the definition of a sparse set $I_N$ so that 
	\begin{align*}
	\dist(\vbp_0(m+1),\mathring\vbp(\alpha_m-\chi))&=\dist(\cJ(\vbp_0(m),\vbp_1(m)),\cJ(\vbp(\alpha_m))\\
	&\leq
	C(\E,J,\d)\{
	\dist(\vbp_0(m),\mathring\vbp(\alpha_m))+
	\dist(\vbp_0(m),\vbp_1(m))+|I_N|\}.
	\end{align*}
	Thus,
	\begin{align*}
	\dist(\mathring\vbp(\alpha_{m+1}),\mathring\vbp(\alpha_m-\chi))
	&\le C(\E,J,\d)(
	\dist(\vbp_0(m),\mathring\vbp(\alpha_{m}))
	+\dist(\vbp_0(m),\vbp_1(m))+|I_N|)\\
	&+\dist(\vbp_0(m+1),\mathring\vbp(\alpha_{m+1})).
	\end{align*}
	Moreover, we have
	$$
	\|\alpha-\beta\|\le C_1(E)
	\dist(\mathring\vbp(\alpha),\mathring\vbp(\beta)).
	$$
	Thus, defining $\e_\a(m)=\a_{m+1}-(\a_m-\chi)$, we conclude from \eqref{eq:Jan14} that 
	\[
	\{\e_\a\}\in \CS(\bbN,\bbR^g/\bbZ^g).
	\]
\end{proof}

\begin{lemma}
	For fixed $L \in \bbN$ and $\delta > 0$, the set
	\[
	B_{L,\delta} = \bigg\{ m : \bigg\|\sum_{j=m+1}^{m+\ell} \e_j^\a  \bigg\| \le \delta \text{ for all }\ell =0,\dots, L-1 \bigg\}
	\]
	has a sparse complement, i.e., $\frac{ \lvert  B_{L,\delta} \cap \{ 1, \dots, N\} \rvert}N \to 1$ as $N \to \infty$.
\end{lemma}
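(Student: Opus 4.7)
The plan is to deduce this from the previously established Ces\`aro summability of the one-step increments, namely $\{\alpha_{m+1}-\alpha_m\}_{m\in\bbN_0}\in\CS$ (where distances are measured in the induced metric on $\bbR^g/\bbZ^g$, for which the triangle inequality holds). The key quantitative tool is Markov's inequality in the form already used throughout this section: for any $\eta>0$, the set
\[
T_\eta := \{m\in\bbN_0 : |\alpha_{m+1}-\alpha_m|>\eta\}
\]
is sparse, because $\frac{1}{N}|T_\eta\cap[1,N]| \le \frac{1}{\eta N}\sum_{m=1}^N|\alpha_{m+1}-\alpha_m| \to 0$.

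Next I would use the triangle inequality to control multi-step gaps by one-step gaps. If $m\notin B_{L,\delta}$, then there is some $\ell\in\{1,\dots,L-1\}$ with $|\alpha_{m+\ell}-\alpha_m|>\delta$, and telescoping gives
\[
\delta < |\alpha_{m+\ell}-\alpha_m| \le \sum_{j=0}^{\ell-1}|\alpha_{m+j+1}-\alpha_{m+j}| \le \sum_{j=0}^{L-1}|\alpha_{m+j+1}-\alpha_{m+j}|,
\]
so at least one summand exceeds $\delta/L$. In other words, there is some $j\in\{0,\dots,L-1\}$ with $m+j\in T_{\delta/L}$. This yields the inclusion
\[
\bbN_0\setminus B_{L,\delta} \subset \bigcup_{j=0}^{L-1}(T_{\delta/L}-j),
\]
where $T_{\delta/L}-j := \{m\in\bbN_0 : m+j\in T_{\delta/L}\}$.

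Finally, each shifted set $T_{\delta/L}-j$ is sparse (shifting by a bounded amount affects $|\,\cdot\,\cap[1,N]|$ by at most the constant $j$, so the density limit is unchanged), and a finite union of sparse sets is sparse. Hence $\bbN_0\setminus B_{L,\delta}$ is sparse, which is equivalent to $\frac{|B_{L,\delta}\cap\{1,\dots,N\}|}{N}\to 1$. The only potential subtlety is verifying that the metric used to define $B_{L,\delta}$ is the same one for which the triangle inequality and the Ces\`aro condition are stated --- but this is precisely the metric on $\bbR^g/\bbZ^g$ inherited from the previously derived Lipschitz estimate, so no further argument is needed.
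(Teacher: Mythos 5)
Your proof is correct and follows essentially the same route as the paper: both arguments reduce the multi-step condition to the Ces\`aro summability of the one-step increments $\{\alpha_{m+1}-\alpha_m\}$ via telescoping, apply Markov's inequality to obtain sparseness, and conclude by noting that a finite union of sparse sets is sparse. The only cosmetic difference is that the paper telescopes at the level of $\CS$ sequences (showing each $\{\alpha_{m+\ell}-\alpha_m\}$ is itself in $\CS$) whereas you telescope at the level of sets via a pigeonhole and shifted copies of $T_{\delta/L}$; the substance is identical.
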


\begin{proof}
	Since shifts and linear combinations of $\CS$ sequences are in $\CS$,  $\big\{\sum_{j=m+1}^{m+\ell} \e_j^\a \big\}_{m=0}^\infty  \in \CS$ for any $\ell$. Thus, for any $\ell$, the set $\big\{ m : \bigg\|\sum_{j=m+1}^{m+\ell} \e_j^\a  \bigg\| > \delta\big\}$ is sparse; the complement of $B_{L,\delta}$ is a union of finitely many sparse sets, so it is sparse.
\end{proof}

\begin{proof}[Proof of Theorem~\ref{conj1}]
It remains to prove that, for every $\epsilon > 0$,
\begin{equation}\label{17aug1}
\limsup_{N\to\infty} \frac{1}{N}\sum_{m=1}^N\dist(\cT_\E^+,(S_+^*)^mJ_+S_+^m) \le \epsilon.
\end{equation}
Fix $L$ so that $\sum_{\ell = L}^\infty e^{-\ell} \|J_+\|  \le \epsilon /16$.  Choose $\delta > 0$ so that
\begin{equation}\label{17aug2}
\lvert \cA(\b_1) - \cA(\b_2) \rvert \le \frac \epsilon{8 L} , \qquad \lvert \cB(\b_1) - \cB(\b_2) \rvert \le \frac \epsilon{8 L}
\end{equation}
whenever $\lvert \b_1 - \b_2 \rvert \le \delta$.

Since $\dist (\cT_\E^+,(S_+^*)^mJ_+S_+^m)$ is uniformly bounded in $m$ and the complement of  $B_{L,\delta}$ is sparse,
\[
\limsup_{N\to\infty} \frac{1}{N} \sum_{\substack{1\le m \le N \\ m \notin B_{L,\delta}}} \dist (\cT_\E^+,(S_+^*)^mJ_+S_+^m)  = 0.
\]
Set $\a_m=\sum_{j=1}^m\vare^\a_j$. For $m\in B_{L,\delta}$, estimating the distance to $\cT_\E^+$ by the distance to $J(\a_m - m\chi)$ gives
\[
\dist(\cT_\E^+,(S_+^*)^mJ_+S_+^m)  \le  \sum_{\ell=0}^{\infty} e^{-\ell}(|a_{m+\ell}-\cA(\a_{m}- (m+\ell)\chi)|+|b_{m+\ell}-\cB(\a_{m}-(m+\ell) \chi)|).
\]
Using \eqref{17aug2} for $\ell < L$ and using our choice of $L$ to bound the tail of the series, we obtain
\[
\dist(\cT_\E^+,(S_+^*)^mJ_+S_+^m)  \le  \frac \epsilon 2 +  \sum_{\ell=0}^{L-1} e^{-\ell}(|a_{m+\ell}-\cA(\a_{m+\ell}- (m+\ell)\chi)|+|b_{m+\ell}-\cB(\a_{m+\ell}-(m+\ell) \chi)|).
\]
Thus, to prove \eqref{17aug1}, it remains to prove
\begin{equation}\label{17aug5}
\limsup_{N\to\infty} \frac{1}{N} \sum_{\substack{1\le m \le N \\ m \in B_{L,\delta}}} \sum_{\ell=0}^{L-1} e^{-\ell} g_{m+\ell} \le \frac \epsilon 2,
\end{equation}
where $g_p = |a_p-\cA(\a_{p}- p \chi)|+|b_{p}-\cB(\a_{p}-p \chi)|$. Note $g \in \CS$ by \eqref{17aug3}.  Enlarging the range of summation, we obtain
\begin{align*}
\limsup_{N\to\infty} \frac{1}{N} \sum_{\substack{1\le m \le N \\ m \in B_{L,\delta}}} \sum_{\ell=0}^{L-1} e^{-\ell} g_{m+\ell}
& \le \limsup_{N\to\infty} \frac 1N  \sum_{p=1}^{N+L} \sum_{\ell=0}^{L-1} e^{-\ell} g_p.
\end{align*}
Now the sum in $\ell$ can be separated as an explicit constant, so this $\limsup$ is zero since $g \in \CS$. Then \eqref{17aug5} follows, and the proof of \eqref{17aug1} is complete.
\end{proof}
\providecommand{\MR}[1]{}
\providecommand{\bysame}{\leavevmode\hbox to3em{\hrulefill}\thinspace}
\providecommand{\MR}{\relax\ifhmode\unskip\space\fi MR }
\providecommand{\MRhref}[2]{%
	\href{http://www.ams.org/mathscinet-getitem?mr=#1}{#2}
}
\providecommand{\href}[2]{#2}


\begin{thebibliography}{10}

\bibitem{ArovDym}
D.~Z. Arov and H.~Dym, \emph{{$J$}-contractive matrix valued functions and
	related topics}, Encyclopedia of Mathematics and its Applications, vol. 116,
Cambridge University Press, Cambridge, 2008. \MR{2474532}


	\bibitem{Classpotential}
	D.~H. Armitage and S.~J. Gardiner, \emph{Classical potential theory}, Springer
	Monographs in Mathematics, Springer Verlag, London, 2001. \MR{1801253}

	\bibitem{BuGoHeNjORFBook}
	A.~Bultheel, P.~Gonz\'{a}lez-Vera, E.~Hendriksen, and O.~Nj\aa~stad,
	\emph{Orthogonal rational functions}, Cambridge Monographs on Applied and
	Computational Mathematics, vol.~5, Cambridge University Press, Cambridge,
	1999. \MR{1676258}

	\bibitem{CMV03}
	M.~J. Cantero, L.~Moral, and L.~Vel\'{a}zquez, \emph{Five-diagonal matrices and
		zeros of orthogonal polynomials on the unit circle}, Linear Algebra Appl.
	\textbf{362} (2003), 29--56. \MR{1955452}

	\bibitem{ChriSiZiFiniteGap}
	J.~S. Christiansen, B.~Simon, and M.~Zinchenko, \emph{Finite gap {J}acobi
		matrices: a review}, Spectral analysis, differential equations and
	mathematical physics: a festschrift in honor of {F}ritz {G}esztesy's 60th
	birthday, Proc. Sympos. Pure Math., vol.~87, Amer. Math. Soc., Providence,
	RI, 2013, pp.~87--103. \MR{3087900}

	\bibitem{ChrEichVan20}
	J.S. Christiansen, B.~Eichinger, and T.~VandenBoom, \emph{{Finite-Gap CMV
			Matrices: Periodic Coordinates and a Magic Formula}}, International
	Mathematics Research Notices (2020).

	\bibitem{DamKillipSimAnnals}
	D.~Damanik, R.~Killip, and B.~Simon, \emph{Perturbations of orthogonal
		polynomials with periodic recursion coefficients}, Ann. of Math. (2)
	\textbf{171} (2010), no.~3, 1931--2010. \MR{2680401}

	\bibitem{DamPushnSim08}
	D.~Damanik, A.~Pushnitski, and B.~Simon, \emph{The analytic theory of matrix
		orthogonal polynomials}, Surv. Approx. Theory \textbf{4} (2008), 1--85.
	\MR{2379691}
	
	\bibitem{DamYudAdvances}
	D.~Damanik and P.~Yuditskii, \emph{Counterexamples to the {K}otani-{L}ast
		conjecture for continuum {S}chr\"{o}dinger operators via
		character-automorphic {H}ardy spaces}, Adv. Math. \textbf{293} (2016),
	738--781. \MR{3474334}

	\bibitem{DeckLub12}
	K.~Deckers and D.~S. Lubinsky, \emph{Christoffel functions and universality
		limits for orthogonal rational functions}, Anal. Appl. (Singap.) \textbf{10}
	(2012), no.~3, 271--294. \MR{2948895}

	\bibitem{Doob}
	J.~L. Doob, \emph{Classical potential theory and its probabilistic
		counterpart}, Classics in Mathematics, Springer-Verlag, Berlin, 2001, Reprint
	of the 1984 edition. \MR{1814344}

	\bibitem{EichSIGMA16}
	B.~Eichinger, \emph{Periodic {GMP} matrices}, SIGMA Symmetry Integrability
	Geom. Methods Appl. \textbf{12} (2016), Paper No. 066, 19. \MR{3519566}

	\bibitem{EichLuk}
	B.~Eichinger and M.~Luki\'c, \emph{{Stahl--Totik regularity for continuum
			Schr\"odinger operators}}, arXiv:2001.00875 (2020).

	\bibitem{GeHoMiTe}
	F.~Gesztesy, H.~Holden, J.~Michor, and G.~Teschl, \emph{Soliton equations and
		their algebro-geometric solutions. {V}ol. {II}}, Cambridge Studies in
	Advanced Mathematics, vol. 114, Cambridge University Press, Cambridge, 2008,
	$(1+1)$-dimensional discrete models. \MR{2446594}

	\bibitem{GolKru}
	L.~Golinskii and S.~Khrushchev, \emph{Ces\`aro asymptotics for orthogonal
		polynomials on the unit circle and classes of measures}, J. Approx. Theory
	\textbf{115} (2002), no.~2, 187--237. \MR{1901215}

\bibitem{HornJohnMatrix}
R.~A. Horn and C.~R. Johnson, \emph{Topics in matrix analysis}, Cambridge
University Press, Cambridge, 1994, Corrected reprint of the 1991 original.
\MR{1288752}


	\bibitem{Krueger10}
	H.~Kr\"{u}ger, \emph{Probabilistic averages of {J}acobi operators}, Comm. Math.
	Phys. \textbf{295} (2010), no.~3, 853--875. \MR{2600036}

	\bibitem{Nevai}
	P.~G. Nevai, \emph{Orthogonal polynomials}, Mem. Amer. Math. Soc. \textbf{18}
	(1979), no.~213, v+185. \MR{519926}
	
\bibitem{Pot}
V.~P. Potapov, \emph{The multiplicative structure of {$J$}-contractive matrix
	functions}, Amer. Math. Soc. Transl. (2) \textbf{15} (1960), 131--243.
\MR{0114915}


	\bibitem{RansfordPotTheorie}
	T.~Ransford, \emph{Potential theory in the complex plane}, London Mathematical
	Society Student Texts, vol.~28, Cambridge University Press, Cambridge, 1995.
	\MR{1334766}

	\bibitem{Remling11}
	C.~Remling, \emph{The absolutely continuous spectrum of {J}acobi matrices},
	Ann. of Math. (2) \textbf{174} (2011), no.~1, 125--171. \MR{2811596}


	\bibitem{Simon07}
	B.~Simon, \emph{Equilibrium measures and capacities in spectral theory},
	Inverse Probl. Imaging \textbf{1} (2007), no.~4, 713--772. \MR{2350223}

	\bibitem{SimonJAT09}
	\bysame, \emph{Regularity and the {C}es\`aro-{N}evai class}, J. Approx. Theory
	\textbf{156} (2009), no.~2, 142--153. \MR{2494549}

\bibitem{SimonSzego}
\bysame,  \emph{Szego's theorem and its descendants}, M. B. Porter Lectures,
Princeton University Press, Princeton, NJ, 2011, Spectral theory for $L^2$
perturbations of orthogonal polynomials. \MR{2743058}

	\bibitem{SimonBasicCompAna}
	\bysame, \emph{Basic complex analysis}, A Comprehensive Course in Analysis,
	Part 2A, American Mathematical Society, Providence, RI, 2015. \MR{3443339}

	\bibitem{SoYu}
	M. Sodin and P. Yuditskii, \emph{Almost periodic {J}acobi matrices with
		homogeneous spectrum, infinite-dimensional {J}acobi inversion, and {H}ardy
		spaces of character-automorphic functions}, J. Geom. Anal. \textbf{7} (1997),
	no.~3, 387--435. \MR{1674798}
	
	\bibitem{StahlTotik92}
	H.~Stahl and V.~Totik, \emph{General orthogonal polynomials}, Encyclopedia of
	Mathematics and its Applications, vol.~43, Cambridge University Press,
	Cambridge, 1992. \MR{1163828}

	\bibitem{TeschlJacoobi}
	G.~Teschl, \emph{Jacobi operators and completely integrable nonlinear
		lattices}, Mathematical Surveys and Monographs, vol.~72, American
	Mathematical Society, Providence, RI, 2000. \MR{1711536}

	\bibitem{TeschlMathMedQuant}
	\bysame, \emph{Mathematical methods in quantum mechanics}, second ed., Graduate
	Studies in Mathematics, vol. 157, American Mathematical Society, Providence,
	RI, 2014, With applications to Schr\"{o}dinger operators. \MR{3243083}

	\bibitem{Ullman}
	J.~L. Ullman, \emph{On the regular behaviour of orthogonal polynomials}, Proc.
	London Math. Soc. (3) \textbf{24} (1972), 119--148. \MR{291718}

	\bibitem{VolYu}
	A.~Volberg and P.~Yuditskii, \emph{Kotani-Last problem and Hardy spaces on surfaces of Widom type}, Invent. Math. \textbf{197} (2014), 683--740.  \MR{3251833}

	\bibitem{YuditskiiAdv}
	P.~Yuditskii, \emph{Killip-{S}imon problem and {J}acobi flow on {GMP}
		matrices}, Adv. Math. \textbf{323} (2018), 811--865. \MR{3725892}

\end{thebibliography}
	\end{document}